\newtheorem{thm}{Theorem}[section]
\newtheorem{cor}[thm]{Corollary}
\newtheorem{lem}[thm]{Lemma}
\theoremstyle{definition}
\theoremstyle{remark}
\newtheorem{rem}[thm]{Remark}
\numberwithin{equation}{section}
\newcommand{\vertiii}[1]{{\left\vert\kern-0.25ex\left\vert\kern-0.25ex\left\vert #1 
\right\vert\kern-0.25ex\right\vert\kern-0.25ex\right\vert}}
\newcommand{\blue}{\color{black}} 
\newcommand{\dv}{\color{black}}
\begin{document}
\title[Equal higher order analysis of an unfitted dG method for Stokes systems]{Equal higher order analysis of an unfitted discontinuous Galerkin method for Stokes flow systems}%
\author{Aikaterini Aretaki\textsuperscript{1}}
\address{\textsuperscript{1}Department of Mathematics, National Technical University of Athens, Greece.}
\thanks{}
\email{kathy@mail.ntua.gr}
\author{Efthymios N. Karatzas\textsuperscript{1,2,3}}
\address{\textsuperscript{2}FORTH Institute of Applied and Computational Mathematics, Heraclion, Crete, Greece.}
\address{\textsuperscript{3}SISSA (affiliation), International School for Advanced Studies, Mathematics Area, mathLab Trieste, Italy.}
\email{karmakis@math.ntua.gr \& efthymios.karatzas@sissa.it}

\author{Georgios Katsouleas\textsuperscript{1}}
\thanks{}
\email{gekats@mail.ntua.gr}
\thanks{}%
\subjclass[2000]{Primary}%
\keywords{cut finite element method, 
discontinuous Galerkin, 
Stokes problem, 
stabilization, 
penalty methods}%
\date{\today} 

\thanks{}%
\subjclass{}%

\begin{abstract}
In this work, we analyze an unfitted discontinuous Galerkin discretization for the numerical solution of the Stokes system based on equal higher-order discontinuous velocities and pressures. This approach combines the best from both worlds, firstly the advantages of a piece-wise discontinuous high--order accurate approximation and secondly the advantages of an unfitted to the true geometry grid around possibly complex objects and/or geometrical deformations. Utilizing a fictitious domain framework, the physical domain of interest is embedded in an unfitted background mesh and the geometrically unfitted discretization is built upon symmetric interior penalty discontinuous Galerkin formulation. {\blue To enhance stability we enrich the discrete variational formulation  with a pressure stabilization term. Moreover, the present contribution adopts high order ghost penalty strategies to address the ill conditioning of the system matrix caused by small truncated elements with respect to the unfitted boundary. Motivated by continuous unfitted FEM \cite{BuHa14_III,MLLR12,MLLR14} along with other unfitted mesh surveys grounded on discontinuous spaces \cite{BBH09,GM19,GSM20,M17}, we use proper velocity and pressure ghost penalties defined on faces of cut cells to establish a robust high-order method, in spite of the cell agglomeration technique usually applied on dG methods.}
The current presentation should prove valuable in engineering applications where special emphasis is placed on the optimal effective approximation attaining much smaller relative errors in coarser meshes. Inf-sup stability, the optimal order of convergence, and the {\blue condition number sensitivity with respect to cut configuration} are investigated. {
Numerical examples verify the theoretical results.}
\end{abstract}
\maketitle
\section{Introduction}

The overall objective of this paper is to discuss the discontinuous Galerkin method in an unfitted mesh framework. The prominence both of fictitious domain methods, as well as discontinuous Galerkin methods, is easily explained by their relative advantages. Regarding the former, many practical engineering applications involve problems defined in complex domains whose boundary can even be exposed to large topological changes or deformations. Such cases pose severe challenges in the discretization and even result to  simulations of diminished quality. For instance, the generation of  a suitable conforming mesh is a challenging and computationally intensive task. As a means to bypass such complications, it is instructive to consider the actual computational domain of interest  as being embedded in an unfitted background mesh. More precisely, this can be achieved usually via a geometric parametrization of its boundary via level-set geometries, using a fixed Cartesian background and its associated mesh for each new domain configuration. This approach avoids the need to remesh, as well as the need to develop a reference domain formulation in many applications and methodologies, as typically done in fitted grid FEMs. 


The discontinuous Galerkin method is a robust finite element method that is very well suited to handling complicated geometries with unfitted to the true geometry and/or unstructured meshes.
DG methods generalize the continuous finite element framework by relaxing the continuity constraints at inter-element boundaries, thus providing the tools to manipulate potential jumps via
numerical fluxes \cite{DiPE12}. Such an approach results in additional flexibility in the design of shape functions and enables the use of different polynomial degrees of approximation on adjacent elements, as well as incorporates interfaces between non matching grids and evolving domains \cite{ACLY19,AFRV16,BEFI11,ERW16,GRW05}. Hence, the main motivation for using dG methods in fluid flow problems lies in their robustness in convection-dominated regimes, their conservation properties, and their great flexibility in the mesh-design. Since less communication is required between neighbouring mesh cells, the method is more amenable in parallel computing \cite{LLANC11,SM17} and it is highly attractive in $hp$-adaptive strategies \cite{AGH13,Georgoulis17,SST03,T02} where mesh refinement can be achieved without the continuity restrictions customary in standard finite element methods. 

A higher order analysis of an unfitted discontinuous approach for the Stokes system combines the best of the two methodologies and shows better stability properties than continuous Galerkin, allowing high--order accurate approximations within a geometrically unfitted setting. A major challenge in the unfitted mesh case is that stability and approximations properties as well as the conditioning of the system matrix can be severely impacted by the presence of small cut elements. A possible remedy is to introduce a stabilization term, the so-called ghost penalty term, which prevents the ill-conditioning of the discrete problem. So far, there are few examples in the literature of high-order stabilized unfitted FEM where Stokes equation is considered. In \cite{BuHa14_III,GO18} unfitted finite element pressure--velocity couplings for the Stokes problem are employed, in \cite{JLL15} the authors utilize high--order piecewise polynomials to develop a cut finite element method on composite meshes, while \cite{LPWL16} is based on an isoparametric mapping reconstruction for high accurate geometry approximations.
On the other hand, unfitted dG have mostly been relied on cell agglomeration to deal with the small cut element problem. For instance, in \cite{BDE20} a hybrid high-order (HHO) method has been recently designed and analyzed to approximate the Stokes interface problem on unfitted meshes and in \cite{MKKO16} the compressible Navier–Stokes equations.

Regardless to the cell merging approach, the present work aims to advocate ghost--penalty--type techniques in a higher order analysis of an unfitted dG setting for the Stokes system. In this respect, we augment the discrete system with additional boundary zone ghost penalty terms for both velocity and pressure fields so as to circumvent small cut configuration problems. These terms act on the jumps of the normal  derivatives at faces associated with cut elements and ensure that the condition number is uniformly bounded independently of how the boundary intersects the mesh. Moreover, a fully stabilized scheme is guaranteed by penalizing the pressure jumps across interfaces. To the authors' best knowledge some original introduction of pressure jump/ghost penalties for the Stokes system has only been provided in \cite{BBH09,BCM15} and also some recent contributions on cut dG in \cite{GM19,GSM20,M17}.

Various stabilized finite elements for the Stokes system on fictitious domains have been analyzed in \cite{BaBo04,BCM15,BuHa14_III,GR07,MLLR12,MLLR14} and also extended to the Stokes interface problem in \cite{HaLaZa19,KGR16}. A number of different face-based ghost-penalty stabilizations on cut meshes combined with the continuous interior penalty method has been elaborated in \cite{BFH06,MSW18,WSMW17} and in \cite{SW14} for the transient convection--dominant incompressible Navier--Stokes equations. Analogous work for elliptic boundary value and interface problems has been carried out in \cite{BH2010,BH2012}. 
On the other hand, unfitted dG benefits from the favorable conservation and stability properties of classical dG to solve several  boundary and interface problems \cite{BEFI11,sto,JL13,WC14}  along with two--phase flows \cite{GH14,HEIB13,KK17,K17,MKKO16,SBV11}. A dG variant utilizing divergence--free vector fields for the velocity and continuous pressure approximations for the Stokes and incompressible Navier--Stokes equations has been studied in \cite{BJK90,KJ98}, respectively. Local Discontinuous Galerkin
(LDG) rationales based on mixed formulations of piecewise  solenoidal polynomial velocities and hybrid pressures have been studied in \cite{CCS06,CG05,CKSS02,CKS07}, and also in \cite{MFH08} under interior penalty formulations. Mixed $hp$-discontinuous Galerkin methods for the Stokes problem with a stabilization term penalizing the pressure jumps have been treated in \cite{SST03,T02}. In a vast and non--exhaustive list in the literature on various dG methods, see also \cite{A82,BS08,BS10,CKS09,HL02,LSYY02} and the references therein.

Fictitious domain methods have a long history, dating back to the pioneering  work of Peskin \cite{P1972} and are currently enjoying great popularity, having been successfully applied to a variety of problems. Several improved variants can be found in the recent literature, including such methods as the ghost--cell finite difference method \cite{WFC13}, cut--cell volume method \cite{PHO16}, immersed  interface \cite{KB18}, ghost fluid \cite{BG14}, shifted boundary methods \cite{MS18}, $\phi$--FEM \cite{DL19}, and CutFEM \cite{AK21,BCHLM2014,BH2010,BH2012,BHLMZ18,BHLZ15,HaHa02,KaKaTra20,L19}, among others. For a comprehensive overview of this research area, the interested reader is referred to the review paper \cite{MI05} and also to the recent book volume \cite{UCL16} based on the proceedings of the UCL Workshop 2016. Considerable impetus for such widespread investigations has been provided by  applications in fluids flow or in the context of reduced order modeling for parametrically--dependent domains \cite{KBR19,KSASR2019,KSNSRa2019,KSNSRb2019}. In such cases, immersed and embedded methods compare favorably to standard FEM, providing simple and efficient schemes for the numerical approximation of PDEs in both cases of static and evolving geometries.

Many unfitted variants of discontinuous Galerkin methods have been proposed in the literature as a competitive approaches for simulations in complex and evolving domains \cite{S15}. One of the  first applications involved an elliptic model problem \cite{BE09}, while elliptic interface problems have been discretized via an \itshape hp \normalfont discontinuous Galerkin method  \cite{M12}, an extension of the local dG method \cite{WC14}, and a high--order hybridizable dG method \cite{DWXW17,HNPK13}. In fact, an unfitted dG method was shown to compare favorably to standard dG--FEM \cite{NWBE16}, providing a flexible and accurate alternative to solve the electroencephalography forward problem.  Moreover, we  refer to Saye's important work \cite{S15} in which a numerical quadrature algorithm has been applied to a high-order embedded boundary dG method on curved domains and also to \cite{S17,S17II} for a high-order accurate implicit mesh dG to facilitate precise computation of interfacial fluid flows in evolving geometries. An extension to a parabolic test case has been presented in \cite{BEFI11}. More recently, motivated from PDEs arising from conservation laws on evolving surfaces, an unfitted dG approach was developed for advection problems \cite{ERW16}. Other applications include the linear transport equation \cite{EMNS19}, the Laplace--Beltrami operator on surfaces \cite{BuHaLaMa17} and mixed--dimensional, coupled bulk--surface problems \cite{M17}. In the context of Stokes problems with void or material interfaces, previous efforts include an eXtended hybridizable dG (X-HDG) method \cite{GKK19} combining the hybridizable dG method with an eXtended finite element strategy, considering heaviside enrichment on cut faces/elements.

Our paper is organized as follows. We start with the Stokes flow  model problem and the necessary preliminaries in Section \ref{section2}. The various components of the stabilized unfitted discontinuous Galerkin discretization based on equal higher order discontinuous velocities and pressures are discussed in subsection \ref{22} in detail. Approximation results needed for the analysis of the method are collected in Section \ref{section3}. Section \ref{section4} is devoted to stability estimates and the derivation of the discrete inf--sup condition, followed by a--priori error estimates in Section \ref{section5}. Our theoretical analysis of the method is completed in Section \ref{conditioning}, showing that the condition number of  the stiffness matrix is uniformly  bounded, independently of how the background mesh cuts the boundary. The paper concludes with some numerical tests in Section \ref{section6} which verify the theoretical convergence rates, 
the accuracy and  the geometrical robustness
of the method. 

\begin{figure}
\centering
\includegraphics[scale=0.55]{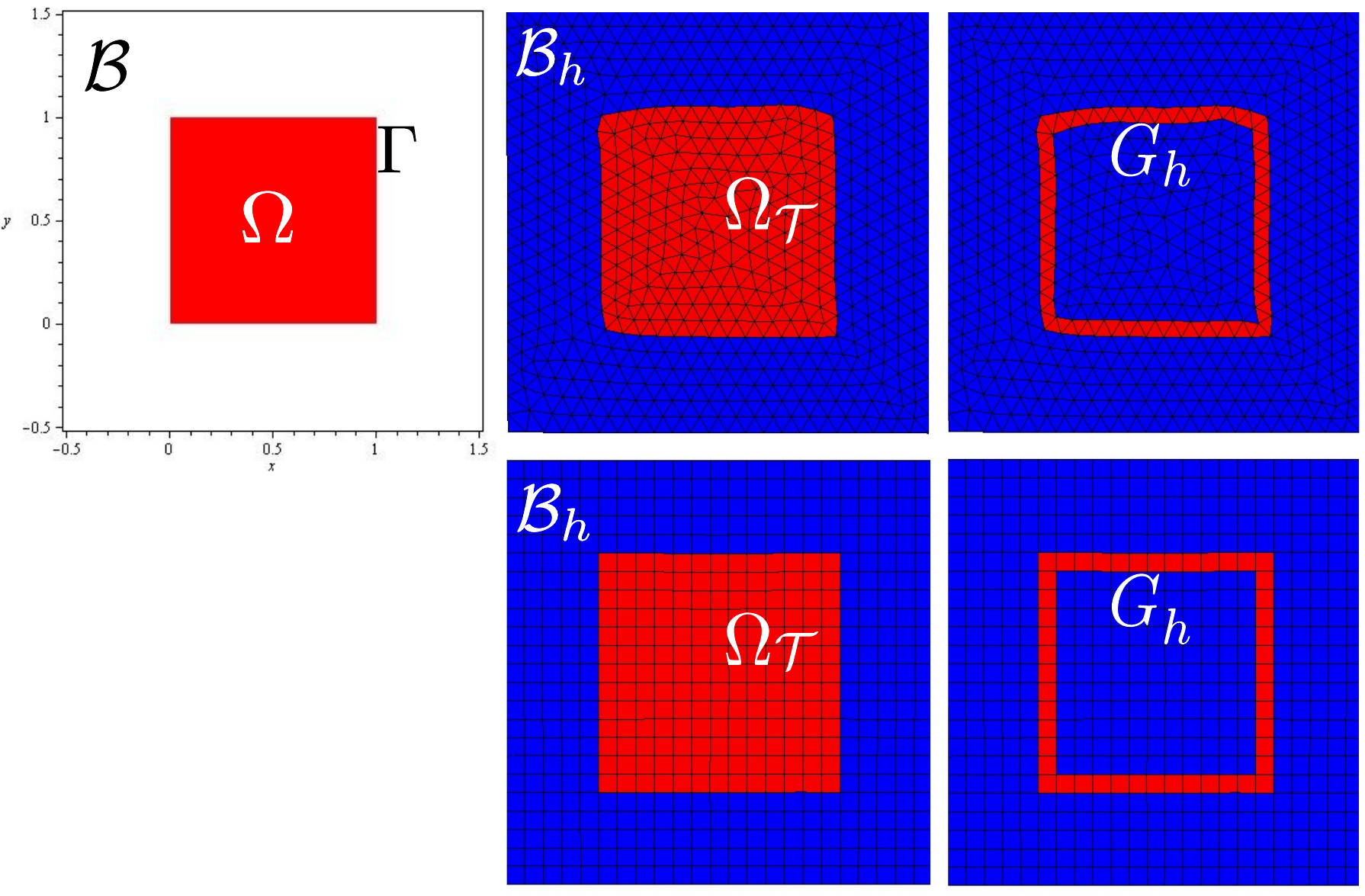}
\caption{The original square domain $\mathrm{\Omega}$ (left picture) and its boundary $\mathrm{\Gamma}$ are represented implicitly
by the level-set function $\phi(x,y)$ in (\ref{lset}) and they are designated by the red colored area. The extended computational domain $\mathrm{\Omega}_{\mathcal{T}}$ is visualized in the middle picture, and it
is covered by the active part of the background mesh $\mathcal{B}_h$ colored in red. The subset $G_h$ of elements in $\mathcal{B}_h$ that intersect the boundary $\mathrm{\Gamma}$ is shown in red at the right picture.}\label{mesh}
\end{figure}

\section{The model  problem and preliminaries} \label{section2}
\subsection{Problem formulation}  
The steady Stokes equations for an incompressible viscous fluid confined in an open, bounded domain $\mathrm{\Omega} \subset \mathbb{R}^d$ ($d=2, 3$) with Lipschitz boundary $\mathrm{\Gamma}=\partial \mathrm{\Omega}$  can be expressed in the form
\begin{eqnarray} \label{The Stokes Problem} 
\nonumber
- \Delta  \mathbf{u}+\nabla p&=&\mathbf{f}\quad\,\,\,\,\, \textrm{in}\,\,\,\mathrm{\Omega}, \\
\nabla \cdot \mathbf{u}&=&0  \quad\,\,\,\,\, \textrm{in}\,\,\, \mathrm{\Omega}, \\
\nonumber \mathbf{u}&=&0 \qquad \textrm{on}\,\,\,\mathrm{\Gamma}.
\end{eqnarray}
Here $\mathbf{u}=(u_1, \dots, u_d):\mathrm{\Omega}  \rightarrow  \mathbb{R}^d$ ($d=2,3$) and $p:\mathrm{\Omega}  \rightarrow  \mathbb{R}$ denote the velocity and pressure fields, and $\mathbf{f} \in \left[L^2(\mathrm{\Omega})\right]^d$ is a forcing term. 
Since the pressure is determined by (\ref{The Stokes Problem}) up to an additive constant, we assume {\blue $\int_{\mathrm{\Omega}}p \, \mathrm{d}x= 0 $} to uniquely determine $p$. Hence, in the following we will consider for pressure the standard space
$$
L_0^2(\mathrm{\Omega}):=\Big\{q \in L^2(\mathrm{\Omega}):\int_{\mathrm{\Omega}}q \,{\blue \mathrm{d}x} =0 \Big\}
$$
of square--integrable functions with zero average over $\mathrm{\Omega}$.

Defining for all $\mathbf{u}, \mathbf{v} \in  V:=[H_0^1(\mathrm{\Omega})]^d$ and $p\in Q:= L_0^2(\mathrm{\Omega})$  the bilinear forms 
\begin{equation}\label{forms_cont}
a(\mathbf{u}, \mathbf{v})= \int_{\mathrm{\Omega}}\nabla \mathbf{u} : \nabla \mathbf{v} \,{\blue \mathrm{d}\mathbf{x}},  \quad 
b(\mathbf{v}, p)= -\int_{\mathrm{\Omega}}p  \nabla \cdot \mathbf{v}\,{\blue \mathrm{d}\mathbf{x}}, 
\end{equation}  
a weak solution to (\ref{The Stokes Problem}) is a pair   $(\mathbf{u}, p) \in [H_0^1(\mathrm{\Omega})]^d \times L_0^2(\mathrm{\Omega})=V\times Q$, such that
\begin{equation}\label{weak}
A(\mathbf{u}, p ; \mathbf{v}, q)=\int_{\mathrm{\Omega}}\mathbf{f} \cdot \mathbf{v}\,{\blue \mathrm{d}\mathbf{x}}, \ \ \textrm{for all test functions} \ \   (\mathbf{v}, q) \in V\times Q,
\end{equation}
with 
$$
A(\mathbf{u}, p ; \mathbf{v}, q)=  a(\mathbf{u}, \mathbf{v}) +b(\mathbf{u}, q)+b(\mathbf{v}, p).
$$
The well--posedness of (\ref{weak}) is standard \cite{DiPE12}.

\subsection{Discretization via an unfitted discontinuous Galerkin method} \label{22}
Implementation of an unfitted discontinuous Galerkin method for the discretization of  (\ref{weak}) requires a fixed background domain $\mathcal{B}$ which contains $\mathrm{\Omega}$. Let $\mathcal{B}_h$ be its corresponding shape--regular mesh, the  \itshape active \normalfont mesh 
$$
\mathcal{T}_h=\left\{T\in \mathcal{B}_h: T\cap \mathrm{\Omega}\neq \emptyset\right\}
$$  
is the  minimal submesh of $ \mathcal{B}_h$ which covers $\mathrm{\Omega}$ and is, in general, \itshape unfitted \normalfont to its boundary $\mathrm{\Gamma}$. As usual, the subscript {\blue $h=\max_{T\in\mathcal{B}_h}h_T=\max_{T\in  \mathcal{B}_h}\mathrm{diam}(T)$} indicates the global mesh size. Finite element spaces for $\mathbf{u}$ and $p$ will be built upon the \itshape extended \normalfont domain $\mathrm{\Omega}_{\mathcal{T}}=\bigcup_{T \in \mathcal{T}_h}T$ which corresponds to  $\mathcal{T}_h$.  The set  of interior faces in the active background mesh is denoted
$$
\mathcal{F}_{h}^{int}=\left\{F=T^{+}\cap T^{-}: T^{+}, T^{-} \in \mathcal{T}_h\right\}.
$$

Fictitious domain  methods, as well as, discontinuous Galerkin related schemes {\blue need Dirichlet boundary conditions at $\mathrm{\Gamma}$ to be weakly satisfied through a variant of Nitsche's method since the mesh does not align with the boundary of the physical domain. Moreover, when Nitsche's approach is applied in the discontinuous Galerkin framework, the continuity of the solution across inter-element boundaries can be attained allowing for independent approximations on different elements and thus, resulting to a consistent discrete scheme \cite{BBH09,GKK19,GM19,GSM20}.}
On the other hand, coercivity over the whole computational domain $\mathrm{\Omega}_{\mathcal{T}}$  is ensured by means of additional ghost penalty terms which act on the gradient jumps in the boundary zone; see, for instance, \cite{BH2012,BuHa14_III,KBR19,MLLR12}. The submesh consisting of all cut elements  is denoted
$$
{G}_h:=\{T\in \mathcal{T}_h:T\cap\mathrm{\Gamma}\neq\emptyset\}
$$
and the relevant set of faces upon which ghost penalty will be applied is given by
$$
\mathcal{F}_{G}:=\left\{F: F \textrm{ is a  face of } T\in G_h, F\, {\blue \nsubseteq}\, \partial\mathrm{\Omega}_{\mathcal{T}}\right\}.
$$
{\blue
We recall that the boundary $\mathrm{\Gamma}$ is well resolved by the mesh $\mathcal{T}_h$ if the following assumptions are satisfied from \cite{BCM15,MLLR12}:} 
{\dv
\begin{itemize}
\item[A:] The intersection between $\mathrm{\Gamma}$ and a facet $F\in\mathcal{F}_h^{int}$ is simply connected; that is, $\mathrm{\Gamma}$ does not cross an interior
facet multiple times.
\item[B:] For each element $T$ intersected by $\mathrm{\Gamma}$, there exists a plane $S_T$ and a piecewise smooth parametrization
$\mathrm{\Phi}: S_T\cap T \to \mathrm{\Gamma}\cap T$.
\item[C:] We assume that there is an integer $N>0$ such that for each element $T\in G_h$, there exists an element $T'\in \mathcal{T}_h\setminus G_h$
and at most $N$ elements $\{T\}_{j=1}^N$ such that $T_1=T$,
$T_N=T'$ and $T_j\cap T_{j+1}\in\mathcal{F}_h^{int}$, $j=1, \ldots, N-1$. In other words, the number of facets to be crossed in order to ``walk" from a cut element $T$ to a non-cut element
$T'\subset\mathrm{\Omega}$ is bounded.
\end{itemize}
}
To define an unfitted discontinuous Galerkin discretization for the Stokes pro\-blem (\ref{weak}), we consider equal--order, elementwise discontinuous  polynomial  finite element pressure and velocity spaces of order $k\geq 1$:
\begin{align*}
V_h &:= \Big\{\mathbf{w}_h\in \Big(L^2(\mathrm{\Omega}_{\mathcal{T}})\Big)^d: \mathbf{w}_h|_{T}\in \Big(\mathcal{P}^{k}(T)\Big)^d, T \in \mathcal {T}_h\Big\} \quad (d=2,3)\\
Q_h &:= \Big\{w_h\in  L^2(\mathrm{\Omega}_{\mathcal{T}}): {\dv\int_\mathrm{\Omega} w_h \, \mathrm{d}x=0}, \,\, w_h|_{T}\in \mathcal{P}^{k}(T), T \in \mathcal {T}_h\Big\}.
\end{align*} 
Moreover, recall the definition 
$$ 
\left\{v\right\}:=\frac{1}{2} \left(v^{+}+v^{-}\right), \quad\quad\left\{\mathbf{v}\right\}:=\frac{1}{2} \left(\mathbf{v}^{+}+\mathbf{v}^{-}\right),
$$
of the {\em average} operator $\left\{\cdot\right\}$ across an interior face $F$ for $v$, $\mathbf{v}$   scalar and vector--valued functions  on $\mathcal{T}_h$ respectively, where $v^{\pm}$ (resp. $\mathbf{v}^{\pm}$) are the traces of $v$ (resp. $\mathbf{v}$) on $F=T^{+}\cap T^{-}$ from the interior of $T^{\pm}$. More precisely, $v^{\pm}(\mathbf{x})=\lim_{t\rightarrow 0^{+}}v(\mathbf{x}{\blue \mp} t{\bf n}_F)$ for $\mathbf{x} \in F$ and  ${\bf n}_F$  the outward--pointing unit normal vector to $F$. The  {\em jump} operator $ [\![\cdot ]\!]$ across $F$  is defined respectively by 
$$ 
[\![v ]\!]:=v^{+}-v^{-}, \quad \quad   [\![\mathbf{v} ]\!]:=\mathbf{v}^{+}-\mathbf{v}^{-}.
$$

With these definitions in place, we are now ready to formulate a discrete counterpart of (\ref{weak}) {\blue employing} an unfitted discontinuous Galerkin method. The symmetric interior penalty discretizations of the diffusion term and the pressure--velocity coupling in (\ref{forms_cont}) lead to the bilinear forms
\begin{align*}
& a_h(\mathbf{u}_h, \mathbf{v}_h) = \int_{\mathrm{\Omega}} \nabla \mathbf{u}_h: \nabla \mathbf{v}_h\,{\blue \mathrm{d}\mathbf{x}} 
- \sum_{F \in \mathcal{F}_h^{int}}\int_{F\cap \mathrm{\Omega}}\left(\left\{  \nabla \mathbf{u}_h\right\}\cdot {\bf n}_F [\![ \mathbf{v}_h  ]\!] + \left\{  \nabla \mathbf{v}_h\right\}\cdot{\bf n}_F [\![ \mathbf{u}_h  ]\!] \right)\,{\blue \mathrm{d}s} \\
& \quad\quad\quad\quad\quad\quad\quad\quad -\int_{\mathrm{\Gamma}}\mathbf{u}_h\nabla \mathbf{v}_h\cdot  \mathbf{n}_{\mathrm{\Gamma}}\,\,{\blue \mathrm{d}s} 
- \int_{\mathrm{\Gamma}}\mathbf{v}_h  \nabla \mathbf{u}_h\cdot \mathbf{n}_{\mathrm{\Gamma}}\,\,{\blue \mathrm{d}s} 
+ \beta h^{-1}\int_{\mathrm{\Gamma}} \mathbf{u}_h\mathbf{v}_h\,{\blue \mathrm{d}s} \\
& \qquad\qquad\quad\quad\quad\quad\quad\quad\quad\quad
\quad\quad\quad\quad\quad\quad\quad\quad
+ \beta h^{-1}\sum_{F\in \mathcal{F}_h^{int}}\int_{F\cap \mathrm{\Omega}}[\![ \mathbf{u}_h  ]\!] [\![ \mathbf{v}_h  ]\!]\,{\blue \mathrm{d}s} , \\
&b_h(\mathbf{v}_h, p_h)= - \int_{\mathrm{\Omega}}p_h\nabla \cdot \mathbf{v}_h \,{\blue \mathrm{d}\mathbf{x}}
+ \sum_{F\in \mathcal{F}_h^{int}}\int_{F\cap \mathrm{\Omega}} [\![ \mathbf{v}_h  ]\!]\cdot \mathbf{n}_F\left\{  p_h\right\}{\blue \mathrm{d}s} 
+\int_{\mathrm{\Gamma}} \mathbf{v}_h \cdot \mathbf{n_{\mathrm{\Gamma}}}  p_h \,{\blue \mathrm{d}s}
\end{align*}
respectively. 
{\dv It is important to mention that in an abuse of notation whenever $\nabla w_h$ is used for functions that lay in the discontinuous Galerkin space, i.e. $w_h\notin H^1(\mathrm{\Omega}_{\mathcal{T}})$, it corresponds to the broken gradient such that $(\nabla w_{h})|_T=\nabla(w_{h}|_T)$ for all $T\in\mathcal{T}_h$. The same applies for the broken divergence operator $\nabla\cdot w_h$ defined element--wise.}

The symmetric interior penalty parameter $\beta>0$ in the definition of $a_h(\cdot, \cdot)$ is chosen sufficiently large  to ensure stability of the method and will be made precise later; see Lemma \ref{a_coerc} and its proof below. For future reference,  note that element--wise integration by parts in the previous forms yields the equivalent formulations
\begin{align}
a_h(\mathbf{u}_h, \mathbf{v}_h) & = -\int_{\mathrm{\Omega}} \Delta \mathbf{u}_h \cdot  \mathbf{v}_h\,{\blue \mathrm{d}\mathbf{x}}
+ \sum_{F \in \mathcal{F}_h^{int}}\int_{F\cap \mathrm{\Omega}} [\![ \nabla \mathbf{u}_h  ]\!] \cdot \mathbf{n}_F \left\{\mathbf{v}_h\right\}\,{\blue \mathrm{d}s} \notag \\
& \quad -\sum_{F \in \mathcal{F}_h^{int}}\int_{F\cap \mathrm{\Omega}} \left\{  \nabla \mathbf{v}_h\right\}\cdot {\bf n}_F [\![ \mathbf{u}_h  ]\!]\,{\blue \mathrm{d}s}
-\int_{\mathrm{\Gamma}}\mathbf{u}_h\nabla \mathbf{v}_h\cdot  \mathbf{n}_{\mathrm{\Gamma}}\,{\blue \mathrm{d}s} \notag \\
& \quad\quad +\beta h^{-1}\int_{\mathrm{\Gamma}} \mathbf{u}_h\mathbf{v}_h\,{\blue \mathrm{d}s} + \beta h^{-1}\sum_{F\in \mathcal{F}_h^{int}}\int_{F\cap \mathrm{\Omega}}[\![ \mathbf{u}_h  ]\!] [\![ \mathbf{v}_h  ]\!]\,{\blue \mathrm{d}s}, \label{a_alt}\\
b_h(\mathbf{v}_h, p_h) & = \int_{\mathrm{\Omega}}\mathbf{v}_h\cdot \nabla p_h\,{\blue \mathrm{d}\mathbf{x}} 
- \sum_{F\in \mathcal{F}_h^{int}}\int_{F\cap \mathrm{\Omega}} \left\{\mathbf{v}_h  \right\}\cdot \mathbf{n}_F [\![ p_h  ]\!]\,{\blue \mathrm{d}s},\label{b_alt}
\end{align}
which will be useful for asserting the consistency of the method.

{\blue 
Owing to the use of equal--order, discontinuous interpolation spaces, the essential inf--sup stability condition is violated. To overcome this difficulty and enhance stability, extra terms need to be added in the dG variational formulation where a standard stabilization involves the pressure face jump penalty
\begin{equation}\label{c}
c_h(p_h, q_h)= \gamma\sum_{F \in \mathcal{F}_h^{int}} \int_{F\cap \mathrm{\Omega}} h_F [\![p_h ]\!][\![q_h  ]\!]\,\mathrm{d}s,
\end{equation}
where $\gamma$ is a positive parameter and $h_F=\min\left\{h_{T}, h_{T'}\right\}$ for $F=T\cap T' \in \mathcal{F}_h^{int}$. 
Some surveys on finite element methods allowing for equal--order velocity and pressure approximations in conjunction with an  interior penalty method can be found, e.g., in \cite{BB06,BB08,CKSS02,CKS09} with an extensive study in the monograph \cite{DiPE12} on dG methods by Di Pietro and Ern.
}

Finally, to extend stabilization on cut elements as well, we also consider the form {\blue \cite{BuHa14_III,MLLR12}}
\begin{equation}\label{j}
J_h(\mathbf{u}_h, p_h; \mathbf{v}_h, q_h)=j_u(\mathbf{u}_h, \mathbf{v}_h)-j_p(p_h, q_h).
\end{equation}
Here, the additional velocity and pressure ghost penalty forms are defined by
\begin{align}
j_u(\mathbf{u}_h, \mathbf{v}_h) & = 
\gamma_{\mathbf{u}}\sum_{j=1}^d\sum_{F\in \mathcal{F}_G}{\blue\sum_{i=0}^k} \int_{F}h_F^{2i-1}[\![ \partial_{\mathbf{n}_F}^i u_{h,j} ]\!] [\![\partial_{\mathbf{n}_F}^i v_{h,j} ]\!]\,{\blue \mathrm{d}s},\label{ju_high_order} \\
j_p(p_h, q_h) & = \gamma_{p}\sum_{F\in \mathcal{F}_G} {\blue \sum_{i=0}^k}\int_{F}h_F^{2i+1}[\![ \partial_{\mathbf{n}_F}^i p_{h} ]\!] [\![ \partial_{\mathbf{n}_F}^i q_{h} ]\!]\,{\blue \mathrm{d}s},\label{jp_high_order}
\end{align}
{\dv where $\partial_{\mathbf{n}}^i v$ is the $i$-th normal derivative given by $\partial_{\mathbf{n}}^i v := \sum_{|\alpha|=i}\frac{1}{\alpha !}D^{\alpha}v(x)\mathbf{n}^{\alpha}$ for multi-index $\alpha=(\alpha_1, \ldots, \alpha_d)$, $|\alpha|=\sum_{i=1}^d\alpha_i$ and $\alpha !=\prod_{i=1}^d\alpha_i!$. Let also $\mathbf{n}^\alpha:=n_1^{\alpha_1}n_2^{\alpha_2}\cdots n_d^{\alpha_d}$ and  $D^{\alpha}:=\frac{\partial^{|\alpha|}}{\partial^{\alpha_1}\partial^{\alpha_2}\cdots\partial^{\alpha_d}}$.}


{\blue The ghost penalty terms defined in (\ref{ju_high_order}) and (\ref{jp_high_order}) are designed to provide sufficient control over the discrete velocity and pressure norms in the extended domain $\mathrm{\Omega}_{\mathcal{T}}$. In particular, when deriving geometrically robust condition numbers, it is critical for the velocity ghost penalty to penalize the lowest order contribution of the form $\int_F h_F^{-1}[\![ \mathbf{u}_h  ]\!][\![ \mathbf{v}_h  ]\!]\,\mathrm{d}s$ and also the higher-order normal derivatives up to the polynomial degree on the entire face in the vicinity of the boundary. The a priori error estimate would still go through with suitably defined discrete velocity norms. Similarly, the same hold for the pressure ghost penalty in (\ref{jp_high_order}). 
For further discussion of a number of issues regarding suitable ghost penalties for dG based discretization, we refer to G{\"u}rkan and Massing in \cite{GM19}.}
The parameters $\gamma_{\mathbf{u}}$ and $\gamma_{p}$ in (\ref{ju_high_order}) and (\ref{jp_high_order})
are positive stabilization constants. More details regarding the CutFEM discretization of the Stokes system can be found in \cite{BuHa14_III}.

Using the previous ingredients, an extended mesh discontinuous Galerkin method for (\ref{weak}) now reads as follows: Find $(\mathbf{u}_h,p_h)\in V_h\times Q_h$, such that 
\begin{equation}\label{cutdg}
A_h(\mathbf{u}_h, p_h ; \mathbf{v}_h, q_h)+J_h(\mathbf{u}_h, p_h ; \mathbf{v}_h, q_h)={\blue L_h(\mathbf{v}_h)}, \ \ \textrm{for all} \ \   (\mathbf{v}_h, q_h) \in V_h\times Q_h.
\end{equation}
The bilinear and linear forms $A_h$ and $L_h$ are defined by
\begin{align}
A_h(\mathbf{u}_h, p_h ; \mathbf{v}_h, q_h) &=  a_h(\mathbf{u}_h, \mathbf{v}_h) +b_h(\mathbf{u}_h, q_h)+b_h(\mathbf{v}_h, p_h)-
{\blue c_h(p_h, q_h)}, \label{A}\\
{\blue L_h(\mathbf{v}_h)} &=  {\blue \int_{\mathrm{\Omega}} \mathbf{f}\cdot  \mathbf{v}_h\, \mathrm{d}\mathbf{x}. \label{L}
}
\end{align}
A similar method without stabilization has been presented in \cite{sto}.

\begin{figure}
\begin{center}
\begin{tabular}{ccc}
\subfigure[$h_0=2^{-2}$]{
\includegraphics[width=0.26\linewidth]{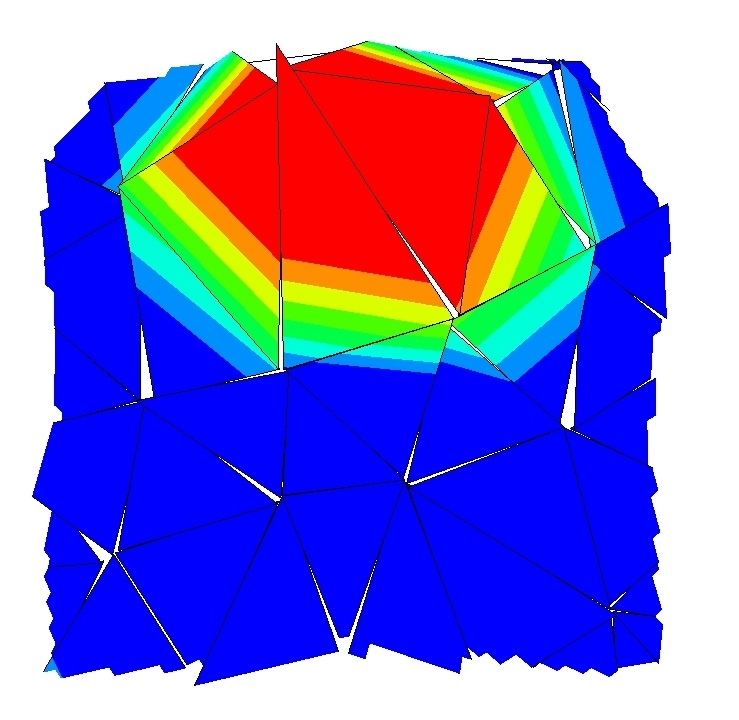}}
\subfigure[$h_1=2^{-3}$]{
\includegraphics[width=0.26\linewidth]{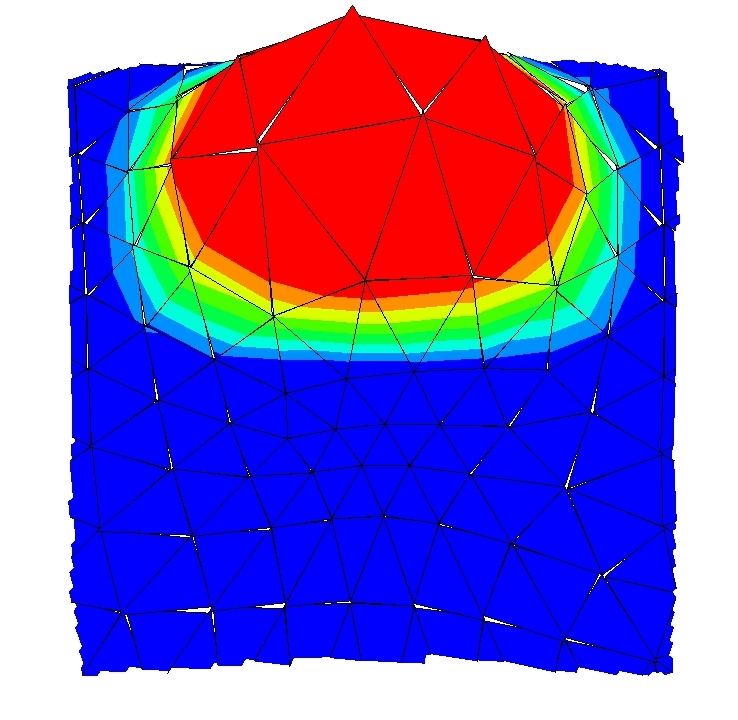}}
\subfigure[$h_2=2^{-4}$]{
\includegraphics[width=0.26\linewidth]{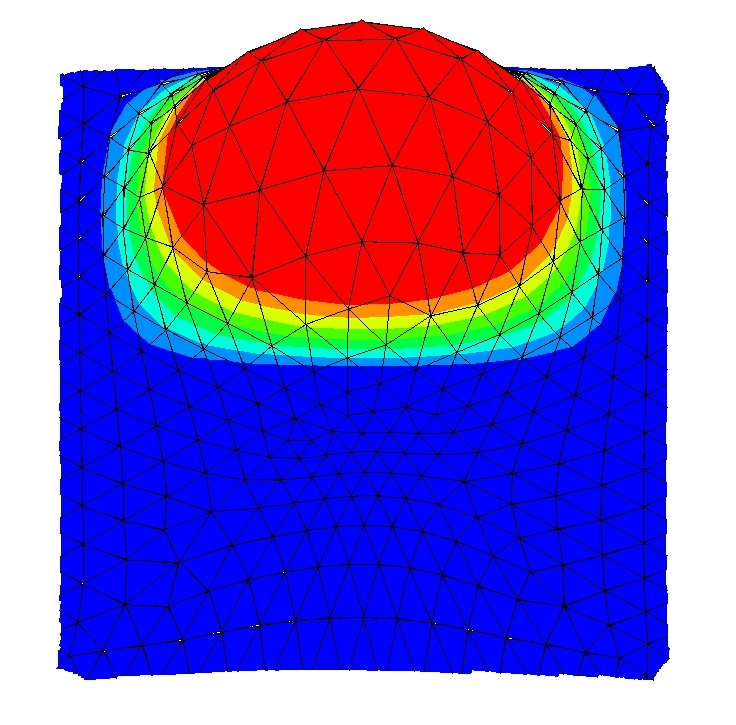}}\\
\subfigure[$h_3=2^{-5}$]{
\includegraphics[width=0.26\linewidth]{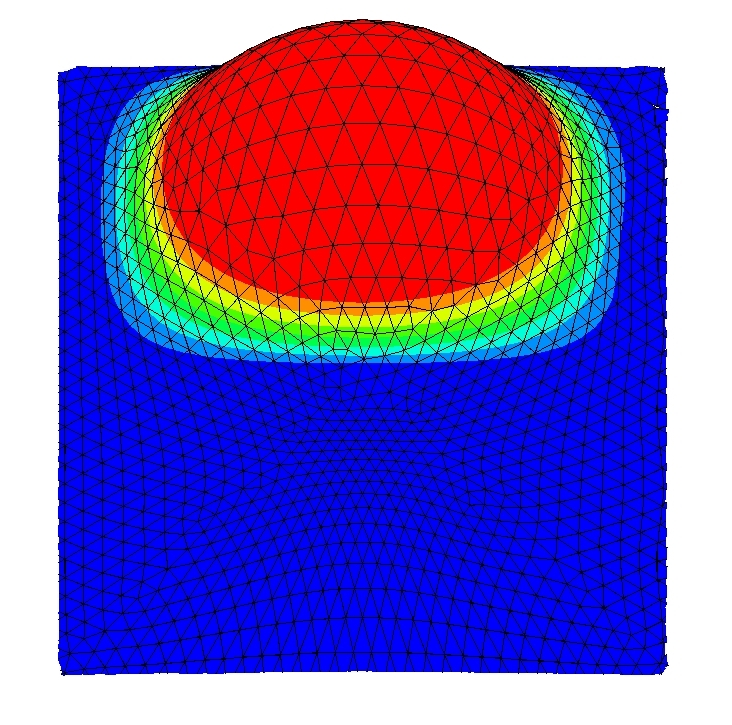}}
\subfigure[$h_4=2^{-6}$]{
\includegraphics[width=0.26\linewidth]{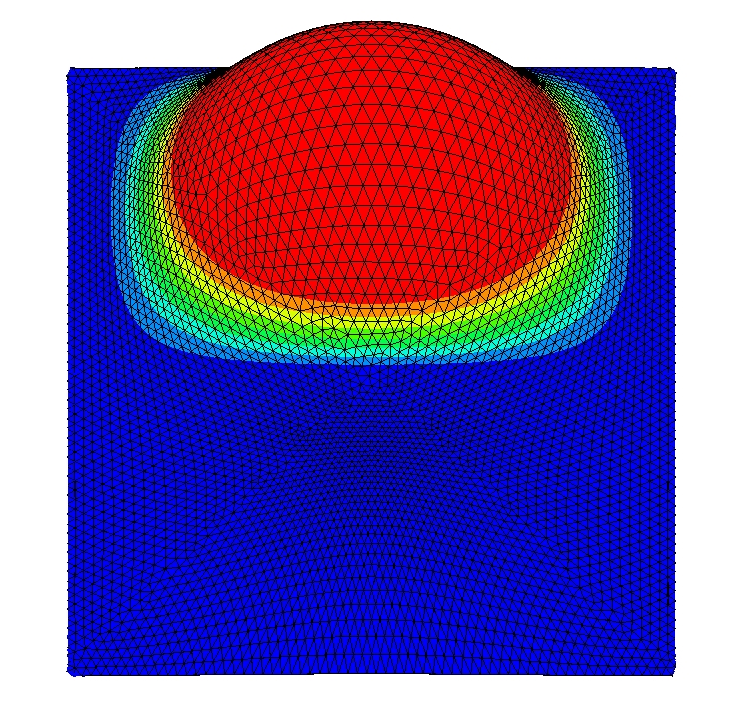}}
\subfigure[$h_5=2^{-7}$]{
\includegraphics[width=0.26\linewidth]{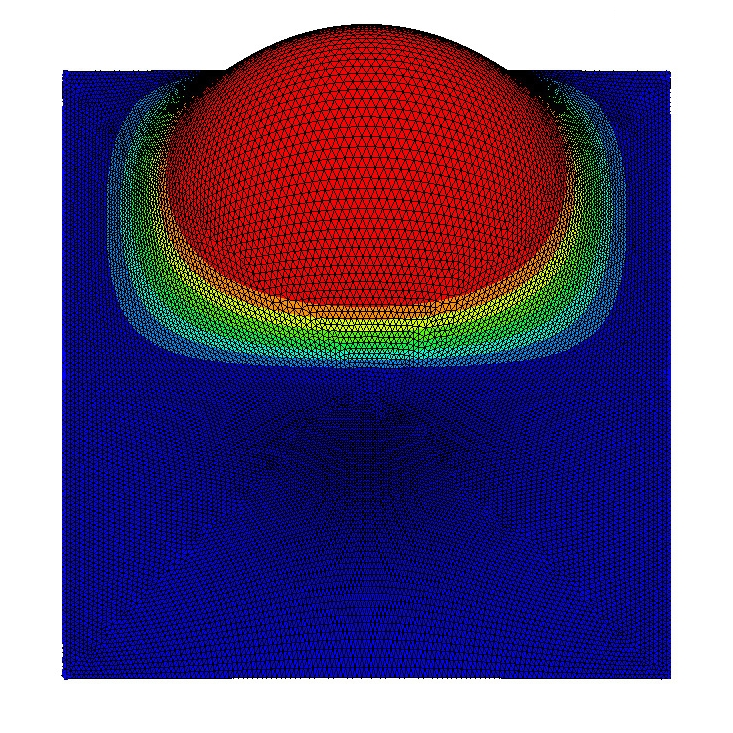}}\\
\end{tabular}
\includegraphics[scale=0.53]{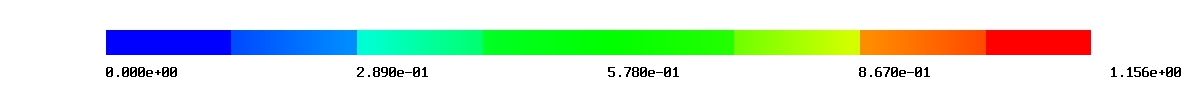}
\caption{Numerical approximation of the first component of the velocity solution visualized for a sequence of successively refined tessellations of the background domain $\mathcal{B}=\left[-0.5, 1.5\right]^{2}$ with mesh parameters $h_{\ell}=2^{-\ell-2}$ ($\ell=0, \dots, 5$) and $P_1-P_1$ finite elements.}\label{coarse}
\end{center}
\end{figure}

\section{Approximation properties} \label{section3}

Throughout this manuscript, standard Sobolev norms and semi--norms on a domain $\mathcal{X}$ for $s\in \mathbb{N}$ will be denoted by $\left\|\cdot\right\|_{s, \mathcal{X}}$ and $|\cdot|_{s, \mathcal{X}}$ respectively, omitting the index in case $s=0$.
A--priori error bounds for the proposed unfitted dG method will be proved with respect to the following mesh--dependent norms:
{\blue
\begin{eqnarray*}
& \vertiii{\mathbf{v}}^2 = \left\|\nabla \mathbf{v}\right\|^2_{\mathrm{\Omega}}+ \left\|h^{-1/2} \mathbf{v}\right\|^2_{\mathrm{\Gamma}} +\left\|h^{1/2}\nabla \mathbf{v}\cdot \mathbf{n}_{\mathrm{\Gamma}}\right\|^2_{\mathrm{\Gamma}} +  \sum_{F\in \mathcal{F}_{h}^{int}} \left\|h^{-1/2}[\![\mathbf{v} ]\!]\right\|^2_{F\cap \mathrm{\Omega}} + \\
&\quad\quad\quad\quad\quad\quad\quad \quad\quad\quad\quad\quad\quad\quad\quad\quad\quad\quad\quad\quad\quad
+ \sum_{T \in \mathcal{T}_h}\left\|h^{1/2}\nabla \mathbf{v} |_{T}\cdot \mathbf{n}_{\partial T}\right\|_{\partial T\cap \mathrm{\Omega}}^2,  \\    
& {\blue \vertiii{p}^2 = \left\|p\right\|^2_{\mathrm{\Omega}}\,\,\,   
+ \left\| h^{1/2}p\right\|^2_{\mathrm{\Gamma}} 
+ \sum_{T \in \mathcal{T}_{h}} \left\|h^{1/2}p\right\|^2_{\partial T\cap \mathrm{\Omega}}
},  \\
& \vertiii{(\mathbf{v},p)}^2 =  \vertiii{\mathbf{v}}^2+\vertiii{p}^2.   
\end{eqnarray*}
}
\normalsize
To investigate stability, we will also make use of the following norms on the extended domain $\mathrm{\Omega}_{\mathcal{T}}$  for the discrete velocity and pressure approximations and their product space:  
\begin{eqnarray}
&\vertiii{\mathbf{v}}^2_{V}= \left\|\nabla \mathbf{v}\right\|^2_{\mathrm{\Omega}_{\mathcal{T}}}+   \left\|h^{-1/2} \mathbf{v}\right\|^2_{\mathrm{\Gamma}} + \sum_{F\in \mathcal{F}_{h}^{int}} \left\|h^{-1/2}[\![\mathbf{v} ]\!]\right\|^2_{F\cap \mathrm{\Omega}} ,\nonumber   \\
& \vertiii{p_h}^2_{Q} =    \left\|  p\right\|^2_{\mathrm{\Omega}_{\mathcal{T}}}\,\,\, \nonumber \\ 
& \vertiii{(\mathbf{v},p)}^2_{V,Q}  =  \vertiii{\mathbf{v}}_{V}^2+\vertiii{p}^2_{Q}. \nonumber 
\end{eqnarray} 
{\dv We should note that the norms $\vertiii{\cdot}$ are defined  on $\mathrm{\Omega}$ and they are used for general functions, while  $\vertiii{\cdot}_\mathcal{X}$ $(\mathcal{X}=V, Q)$ 
represent norms suitable for discrete functions, since they are defined on the extended domain $\mathrm{\Omega}_{\mathcal{T}}$.}

In the following, we summarize certain useful trace inequalities and inverse estimates, which have been proved in \cite{BHLM16,HaHa02,Q09} and will be instrumental in the a--priori error analysis of the method. As in the classical symmetric interior penalty method, the normal flux of a discrete function $v \in \mathcal{P}^k(T)$, $T\in\mathcal{T}_h$ on a face $F\subset\partial T$ or on the boundary $\mathrm{\Gamma}$  is respectively controlled by the inverse inequalities: 
\begin{eqnarray}
\|\partial_{\mathbf{n}_F}^j  v\|_{F} &
\lesssim & h_T^{i-j-1/2} \|D^i v\|_T \quad\forall\,\, T\in\mathcal{T}_h,\,\,\, 0\leq i\leq j, \label{deriv_estimate1} \\
\|\partial_{\mathbf{n}_\mathrm{\Gamma}}^j v\|_{\mathrm{\Gamma}\cap T} & \lesssim & h_T^{i-j-1/2} \|D^i v\|_T \quad \forall\,\, T\in\mathcal{T}_h,\,\,\, 0\leq i\leq j, \label{deriv_estimate2}\\
\|D^j v\|_T  & 
\lesssim &  h_T^{i-j} \|D^i v\|_T\quad\quad\,\,\,\,\forall\,\, T\in\mathcal{T}_h,\,\,\, 0\leq i\leq j \label{deriv_estimate3}
\end{eqnarray}
{\dv where $D^j v$ is the $j$-th total derivative of $v$.}
The notation $a\lesssim b$ (or $a\gtrsim b$) signifies  $a\leq Cb$ (or $a\geq Cb$) for some generic positive constant $C$ that varies with the context, but is always independent of the mesh size and the position of the boundary in relation to the mesh. It is now straightforward to verify that the estimates with respect to the  norms $\vertiii{\cdot }$ and $\vertiii{\cdot}_{\mathcal{X}}$ ($\mathcal{X}=V,Q$) are related via
\begin{equation}\label{norm_est1}
\vertiii{\mathbf{v}}\lesssim \vertiii{\mathbf{v}}_{V}, \quad \vertiii{p}\lesssim \vertiii{p}_{Q},
\end{equation}
{\dv which hold only for discrete functions as a consequence of \eqref{deriv_estimate1}--\eqref{deriv_estimate3}.} {\blue Furthermore, setting $i=j=0$ in \eqref{deriv_estimate1}, \eqref{deriv_estimate2} the next trace inequalities immediately follow for $v\in H^1(\mathrm{\Omega}_{\mathcal{T}})$, \cite{Ha05,HHL03}},
\begin{eqnarray}
\left\|v\right\|_{T \cap \mathrm{\Gamma}} & \lesssim \left(h_T^{-1/2}\left\|v\right\|_{T}+h_T^{1/2}\left\|\nabla v\right\|_{T}\right) \quad \textrm{for} \ T \in \mathcal{T}_h,  \label{tr1a}\\
\left\|v\right\|_{\partial T} &  \lesssim \left(h_T^{-1/2}\left\|v\right\|_{T}+h_T^{1/2}\left\|\nabla v\right\|_{T}\right) \quad \textrm{for} \ T \in \mathcal{T}_h. \label{tr2a}
\end{eqnarray}

Now, the following statement recalls the corresponding definitions and the necessary approximation results for the analysis. {\dv For brevity of presentation, we only state the properties for the scalar--valued pressure space, since they easily extend to the vector--valued velocity space.}

\begin{lem}\label{approx_revised}
Let ${\dv\mathcal{E}^s}: H^s(\mathrm{\Omega}) \rightarrow {\blue H^s(\mathbb{R}^d)}$ ($s\geq0$) be an $H^s$--extension operator on $\mathbb{R}^d$, such that ${\dv\mathcal{E}^s}\phi|_{\mathrm{\Omega}} =\phi|_{\mathrm{\Omega}}$, ${\dv\mathcal{E}^s}\phi|_\mathrm{\Gamma} =\phi|_\mathrm{\Gamma}$, $\left\|{\dv\mathcal{E}^s}\phi\right\|_{s, \mathrm{\Omega}_{\mathcal{T}}}\leq C\left\|\phi\right\|_{s, \mathrm{\Omega}}$ for any $\phi\in H^s(\mathrm{\Omega})$ and {\dv$\Pi_h: L^2(\mathrm{\Omega}) \rightarrow Q_h$} the Scott-Zhang-type extended interpolation operator  defined by 
\begin{equation}\label{interpolation_revised}
{\dv\Pi_h \phi= \Pi^*_h{\dv\mathcal{E}^s} \phi,}
\end{equation}
where {\dv$\Pi_h^*: L^2(\mathrm{\Omega}_{\mathcal{T}}) \rightarrow Q_h$} is the standard Scott-Zhang interpolation.  Then, the estimates
\begin{align}
\left\|v- {\dv\Pi_h} v \right\|_{r,T} &\leq C h^{s-r}_{T} |v|_{s,\Delta_T}, \,\,\, 0\leq r\leq s, \,\,\, \textrm{for every} \,\,\, T \in \mathcal{T}_h, \label{interpolation_est1_revised} \\ 
\!\!\!\!\!\!
\left\|v- {\dv\Pi_h v} \right\|_{r,F} &\leq C h^{s-r-1/2}_{F} |v|_{s,\Delta_F},\,0\leq r\leq s-1/2, \,\textrm{for every}\,F\in\mathcal{F}_h^{int}, \label{interpolation_est2_revised} 
\end{align}
hold for every {\dv $v \in H^s(\mathrm{\Omega}
)$}, where $\Delta_{\mathcal{X}}$ ($\mathcal{X}=T, F$) denotes the corresponding patch of neighbors; i.e., the set of elements sharing at least one vertex with the element $T$ or the element face $F$, respectively.
\end{lem}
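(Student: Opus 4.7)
The plan is to reduce both estimates to the classical Scott--Zhang approximation bounds on the shape--regular mesh $\mathcal{T}_h$ covering $\mathrm{\Omega}_{\mathcal{T}}$, using the $H^s$--stability of the extension $\mathcal{E}^s$ as the only geometry--dependent ingredient. First I would set $\tilde v := \mathcal{E}^s v \in H^s(\mathbb{R}^d)$, which by hypothesis satisfies $\tilde v|_{\mathrm{\Omega}}=v|_{\mathrm{\Omega}}$ and $\|\tilde v\|_{s,\mathrm{\Omega}_{\mathcal{T}}}\leq C\|v\|_{s,\mathrm{\Omega}}$. By definition (\ref{interpolation_revised}) we then have $\Pi_h v=\Pi_h^{*}\tilde v$, so with the usual abuse of notation (identifying $v$ with its extension on $\mathrm{\Omega}_{\mathcal{T}}$) the error $v-\Pi_h v$ coincides on $\mathrm{\Omega}_{\mathcal{T}}$ with $\tilde v-\Pi_h^{*}\tilde v$.

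Next I would invoke the well--known local approximation estimates for the standard Scott--Zhang operator $\Pi_h^{*}$, which hold because $\mathcal{T}_h$ is shape--regular and covers $\mathrm{\Omega}_{\mathcal{T}}$: for every $w\in H^s(\mathrm{\Omega}_{\mathcal{T}})$, every $T\in\mathcal{T}_h$ and every face $F\in\mathcal{F}_h^{int}$ one has
\begin{equation*}
\|w-\Pi_h^{*}w\|_{r,T}\leq C h_T^{s-r}|w|_{s,\Delta_T},\qquad 0\leq r\leq s,
\end{equation*}
\begin{equation*}
\|w-\Pi_h^{*}w\|_{r,F}\leq C h_F^{s-r-1/2}|w|_{s,\Delta_F},\qquad 0\leq r\leq s-1/2.
\end{equation*}
The face estimate is typically obtained from the element estimate via a scaled trace inequality applied to $w-\Pi_h^{*}w$ on the two elements sharing $F$, which is precisely what the inverse trace bounds (\ref{tr1a})--(\ref{tr2a}) provide. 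Applying these two bounds to $w=\tilde v$ and combining with $v=\tilde v$ on $\mathrm{\Omega}\cap\Delta_T$ (resp.\ $\mathrm{\Omega}\cap\Delta_F$) yields (\ref{interpolation_est1_revised}) and (\ref{interpolation_est2_revised}), where the semi--norm on the right is understood in the standard unfitted sense as $|\mathcal{E}^s v|_{s,\Delta_T}$ bounded in turn by $\|v\|_{s,\mathrm{\Omega}}$ through the continuity of $\mathcal{E}^s$.

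The only non--routine point is guaranteeing that the quoted Scott--Zhang--type estimates actually apply to the discontinuous target space $Q_h$. I would address this by recalling that for elementwise discontinuous piecewise polynomial spaces the Scott--Zhang construction can be taken as an elementwise $L^2$--projection (or as the classical Scott--Zhang operator composed with any stable embedding of a continuous $\mathcal{P}^k$ subspace into $Q_h$), both of which enjoy the same local approximation properties on shape--regular meshes; this is the construction used, e.g., in the cut finite element literature cited in the paper. Apart from this verification, the proof is a direct composition of the boundedness of $\mathcal{E}^s$ with standard polynomial approximation, and so I expect no serious obstacle beyond a careful bookkeeping of the patches $\Delta_T$ and $\Delta_F$ near $\mathrm{\Gamma}$.
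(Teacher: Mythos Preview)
The paper does not actually prove this lemma; it is stated as a collection of standard approximation properties for the extended Scott--Zhang operator, essentially recalled from the cited unfitted FEM literature. Your proposal is correct and is precisely the standard argument that underlies these estimates: extend by $\mathcal{E}^s$, apply the classical Scott--Zhang local bounds on the shape--regular mesh $\mathcal{T}_h$, and use the scaled trace inequality for the face estimate.
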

Furthermore, the local approximation properties of the extended Scott-Zhang interpolation {\dv$\Pi_h$} along with the stability of the extension operator ${\dv\mathcal{E}^s}$, give rise to the global error estimate
{\blue
\begin{equation}\label{eq22}
\left\|v- \Pi_h v \right\|_{r,\mathrm{\Gamma}} \leq C h^{s-r-1/2} |v|_{s,\mathrm{\Omega}}, 
\quad 0\leq r\leq s-1/2.
\end{equation}
}
{\dv The vector--valued version of the Scott-Zhang extended interpolation operator $\mathbf{\Pi}_h:[L^2(\mathrm{\Omega})]^d\to V_h$ can be constructed analogously to $\Pi_h$ in Lemma \ref{approx_revised}. Apparently, the interpolation operators $\mathbf{\Pi}_h$ and $\Pi_h$ render the same approximation and stability properties. }

In a similar fashion as in \cite{BuHa14_III,MLLR12}, we interpolate a pair   $(\mathbf{u}, p) \in \left[H^{2}(\mathrm{\Omega})\right]^d \times H^1(\mathrm{\Omega})$ through interpolants of $\left[H^{k+1}\right]^d \times H^k$--extensions of the functions $(\mathbf{u}, p)$  on {\blue $\mathbb{R}^d$}. Keeping the same notation of the extension operator for both the velocity and pressure spaces, we choose $\mathcal{E}^s$ as in Lemma \ref{approx_revised} such that $\mathcal{E}^{k+1}\mathbf{u}|_{\mathrm{\Omega}}=\mathbf{u}$ and $\mathcal{E}^{k}p|_{\mathrm{\Omega}}=p$ and interpolation operators  $\mathbf{\Pi}_h:\left[H^{k+1}(\mathrm{\Omega})\right]^d\to V_h$ and $\Pi_h:	H^k(\mathrm{\Omega}) \to Q_h$. Estimates for an interpolation error of the associated interpolants with respect to the $\vertiii{\cdot}$--norm follow in the next result.

\begin{cor}\label{discrete_error_revised}
The  approximation errors of the extended interpolation operators {\dv$\mathbf{\Pi}_h$ and $\Pi_h$} \itshape for $(\mathbf{u}, p) \in \left[H^{k+1}(\mathrm{\Omega})\right]^d \times H^k(\mathrm{\Omega})$ satisfy
\begin{eqnarray}
\vertiii{\mathbf{u}-\mathbf{\Pi}_h \mathbf{u}}&\leq & C h^{k}\left|\mathbf{u}\right|_{k+1,\mathrm{\Omega}}, \label{approx_er1_revised} \\
\vertiii{(\mathbf{u}-\mathbf{\Pi}_h \mathbf{u}, p-{\dv\Pi_h p})}&\leq & C h^{k} \Big(\left|\mathbf{u}\right|_{k+1, \mathrm{\Omega}}+\left|p\right|_{k, \mathrm{\Omega}}\Big). \label{approx_er2_revised} 
\end{eqnarray}
\end{cor}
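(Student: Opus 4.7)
My plan is to prove the two estimates by bounding each contribution of the triple norms term-by-term, using the local interpolation estimates in Lemma \ref{approx_revised}, the stability of the extension operator $\mathcal{E}^s$, the trace inequalities \eqref{tr1a}--\eqref{tr2a}, and the bounded-overlap property of the Scott-Zhang patches $\Delta_T$ and $\Delta_F$. Since $\mathbf{\Pi}_h$ and $\Pi_h$ are defined via extension (see \eqref{interpolation_revised}), the local estimates \eqref{interpolation_est1_revised}--\eqref{interpolation_est2_revised} apply on every $T\in\mathcal{T}_h$ of the active mesh, not just those lying in $\mathrm{\Omega}$, which is exactly what the triple norms require.

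For the velocity estimate \eqref{approx_er1_revised}, I would treat each summand in $\vertiii{\mathbf{u}-\mathbf{\Pi}_h\mathbf{u}}^2$ separately. The volumetric gradient contribution $\|\nabla(\mathbf{u}-\mathbf{\Pi}_h\mathbf{u})\|_{\mathrm{\Omega}}^2 \leq \sum_{T\in\mathcal{T}_h}\|\nabla(\mathbf{u}-\mathbf{\Pi}_h\mathbf{u})\|_T^2$ is controlled directly by \eqref{interpolation_est1_revised} with $r=1$, $s=k+1$, producing a factor $h^{2k}|\mathcal{E}^{k+1}\mathbf{u}|_{k+1,\Delta_T}^2$; the bounded overlap of the patches and the stability $\|\mathcal{E}^{k+1}\mathbf{u}\|_{k+1,\mathrm{\Omega}_\mathcal{T}}\lesssim \|\mathbf{u}\|_{k+1,\mathrm{\Omega}}$ yield the global bound $h^{2k}|\mathbf{u}|_{k+1,\mathrm{\Omega}}^2$. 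The boundary terms $\|h^{-1/2}(\mathbf{u}-\mathbf{\Pi}_h\mathbf{u})\|_\mathrm{\Gamma}^2$ and $\|h^{1/2}\nabla(\mathbf{u}-\mathbf{\Pi}_h\mathbf{u})\cdot\mathbf{n}_{\mathrm{\Gamma}}\|_\mathrm{\Gamma}^2$ are handled by summing the trace inequality \eqref{tr1a} (applied to $\mathbf{u}-\mathbf{\Pi}_h\mathbf{u}$ and to $\nabla(\mathbf{u}-\mathbf{\Pi}_h\mathbf{u})$ respectively) over cut elements, which gives combinations of $h_T^{-1}\|\mathbf{u}-\mathbf{\Pi}_h\mathbf{u}\|_T^2$ and $h_T\|\nabla(\mathbf{u}-\mathbf{\Pi}_h\mathbf{u})\|_T^2$; both are bounded by $h_T^{2k}|\mathcal{E}^{k+1}\mathbf{u}|_{k+1,\Delta_T}^2$ via \eqref{interpolation_est1_revised}, and for the gradient term I would invoke the inverse estimate \eqref{deriv_estimate3} to absorb second derivatives arising from $|\nabla(\mathbf{u}-\mathbf{\Pi}_h\mathbf{u})|_{1,T}$. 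The sum over element-boundary normal fluxes is treated identically with \eqref{tr2a}. Finally, the interior-face jump contribution $\sum_F\|h^{-1/2}[\![\mathbf{u}-\mathbf{\Pi}_h\mathbf{u}]\!]\|_{F\cap\mathrm{\Omega}}^2$ falls directly under \eqref{interpolation_est2_revised} with $r=0$, $s=k+1$, producing $h_F^{2k}|\mathcal{E}^{k+1}\mathbf{u}|_{k+1,\Delta_F}^2$ per face.

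The pressure estimate is analogous but simpler: the three summands of $\vertiii{p-\Pi_h p}^2$ are the $L^2$-norm on $\mathrm{\Omega}$ (direct application of \eqref{interpolation_est1_revised} with $r=0$, $s=k$), the weighted $L^2$-norm on $\mathrm{\Gamma}$, and the weighted $L^2$-norm on element boundaries in $\mathrm{\Omega}$; the last two I would control by \eqref{tr1a}--\eqref{tr2a} followed by \eqref{interpolation_est1_revised} with $r\in\{0,1\}$, using that the weight $h^{1/2}$ exactly compensates the $h^{-1/2}$ coming from the trace inequality so that each summand is bounded by $h^{2k}|p|_{k,\Delta_T}^2$. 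Adding the velocity and pressure bounds yields \eqref{approx_er2_revised}.

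The main technical point (rather than a real obstacle) is to ensure that all constants are truly independent of how $\mathrm{\Gamma}$ cuts the elements in $G_h$. This is guaranteed because the trace inequalities \eqref{tr1a}--\eqref{tr2a} are stated with constants depending only on shape-regularity of $\mathcal{T}_h$, so one first estimates on the uncut $T$ and then restricts the resulting $L^2$-norms to $T\cap\mathrm{\Gamma}$ or $\partial T\cap\mathrm{\Omega}$; the extension $\mathcal{E}^s\mathbf{u}$ is $H^{k+1}$-regular on all of $\mathrm{\Omega}_\mathcal{T}$, so no small-cut pathology arises in the interpolation error itself. The bounded-overlap counting of patches then collapses the patch seminorms $|\mathcal{E}^s\mathbf{u}|_{s,\Delta_T}$ to the global seminorm $|\mathcal{E}^s\mathbf{u}|_{s,\mathrm{\Omega}_\mathcal{T}}\lesssim|\mathbf{u}|_{s,\mathrm{\Omega}}$, completing the argument.
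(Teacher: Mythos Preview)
Your approach is essentially the same as the paper's: both bound the triple norm term by term via the local Scott--Zhang estimates \eqref{interpolation_est1_revised}--\eqref{interpolation_est2_revised}, the trace inequalities on cut elements, bounded patch overlap, and the stability of $\mathcal{E}^s$; the paper merely packages the ``enlarge $T\cap\mathrm{\Omega}$ to $T$'' step by passing through an auxiliary norm $\vertiii{\cdot}_h$ on $\mathrm{\Omega}_{\mathcal{T}}$. One small technical slip (which the paper's proof shares): the inverse estimate \eqref{deriv_estimate3} applies only to polynomials, so you cannot invoke it on $\nabla(\mathbf{u}-\mathbf{\Pi}_h\mathbf{u})$; instead just apply \eqref{interpolation_est1_revised} directly with $r=2$ to control the second-derivative term that the trace inequality produces.
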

\begin{proof}
{\dv
It is instructive to introduce the auxiliary norm 
\begin{multline*}
\vertiii{\mathbf{v}}^2_h = \left\|\nabla \mathbf{v}\right\|^2_{\mathrm{\Omega}_{\mathcal{T}}}+ \left\|h^{-1/2} \mathbf{v}\right\|^2_{\mathrm{\Gamma}} +\left\|h^{1/2}\mathbf{n}_{\mathrm{\Gamma}}\cdot \nabla \mathbf{v}\right\|^2_{\mathrm{\Gamma}} +  \sum_{F\in \mathcal{F}_{h}^{int}} \left\|h^{-1/2}[\![\mathbf{v} ]\!]\right\|^2_{F\cap \mathrm{\Omega}} \\
+ \sum_{T \in \mathcal{T}_h}\left\|h^{1/2}\nabla \mathbf{v} |_{T}\cdot \mathbf{n}_{\partial T}\right\|_{\partial T}^2,   
\end{multline*}
which clearly dominates $\vertiii{\mathbf{v}}$, in the sense that
$\vertiii{\mathbf{v}-\mathbf{\Pi}_h\mathbf{v}}\leq 
\vertiii{\mathcal{E}^{k+1}\mathbf{v}-\mathbf{\Pi}_h\mathbf{v}}_h$. 
Hence, it is sufficient to prove the statement for $\vertiii{\cdot}_h$ instead of $\vertiii{\cdot}$. 
Setting $\mathbf{e}_{\pi}=\mathcal{E}^{k+1}\mathbf{u}-\mathbf{\Pi}_h\mathbf{u}$, we have by definition
\begin{align*}
\vertiii{\mathbf{e}_{\pi}}^2_h &= 
\left\|\nabla \mathbf{e}_{\pi}\right\|^2_{\mathrm{\Omega}_{\mathcal{T}}}
+ \left\|h^{-1/2} \mathbf{e}_{\pi}\right\|^2_{\mathrm{\Gamma}} +\left\|h^{1/2}\nabla \mathbf{e}_{\pi}\cdot \mathbf{n}_{\mathrm{\Gamma}}\right\|^2_{\mathrm{\Gamma}}
+\sum_{F\in \mathcal{F}_{h}^{int}} \left\|h^{-1/2}[\![\mathbf{e}_{\pi} ]\!]\right\|^2_{F\cap \mathrm{\Omega}}\\ 
& \qquad\qquad\qquad + \sum_{F\in \mathcal{F}_h^{int}}\left\|h^{1/2}\left\{\nabla\mathbf{e}_{\pi}\right\}\cdot \mathbf{n}_{F}\right\|^2_{F\cap 
\mathrm{\Omega}} \\
&= I_1 +I_2+I_3+I_4+I_5.
\end{align*}
Terms $I_1$ and $I_5$ may be simply estimated, using the local approximation property (\ref{interpolation_est1_revised}), the inverse estimate (\ref{deriv_estimate3}) and the stability  of the extension operator $\mathcal{E}^{k+1}$. For instance, 
\begin{align*}
\left\|\nabla \mathbf{e}_{\pi}\right\|_{\mathrm{\Omega}_{\mathcal{T}}}  = \sum_{T\in\mathcal{T}_h} \left\|\nabla \mathbf{e}_{\pi}\right\|_{T}  &\stackrel{(\ref{deriv_estimate3})}{\lesssim} \sum_{T\in\mathcal{T}_h}h_T^{-1}\left\|\mathbf{e}_{\pi}\right\|_{T} \stackrel{(\ref{interpolation_est1_revised})}{\lesssim} \sum_{T\in\mathcal{T}_h}h_T^k|\mathcal{E}^{k+1}\mathbf{u}|_{k+1,\Delta_T} \\
& \lesssim h^k|\mathcal{E}^{k+1}\mathbf{u}|_{k+1,\mathrm{\Omega}_{\mathcal{T}}}
\lesssim h^k|\mathbf{u}|_{k+1,\mathrm{\Omega}}.
\end{align*}
Proceeding in a similar fashion, $I_2$ and $I_3$ can be treated by applying the global error estimate $\left\|\mathbf{v}- \mathbf{\Pi}_h \mathbf{v} \right\|_{\mathrm{\Gamma}}\lesssim h^{k+1/2} |\mathbf{v}|_{k+1,\mathrm{\Omega}}$ and (\ref{deriv_estimate2}), while estimate (\ref{interpolation_est2_revised}) combined with (\ref{deriv_estimate1}) gives the desired bounds for $I_4$ and  the proof of (\ref{approx_er1_revised}) is complete.

The proof of the estimate (\ref{approx_er2_revised}) for the approximation error in the product space is similar, considering the  auxiliary pressure norm
\begin{equation*}   
{\blue 
\vertiii{p}^2_h =    \left\| p\right\|^2_{\mathrm{\Omega}_{\mathcal{T}}}\,\,\,
+ \left\| h^{1/2}p\right\|^2_{\mathrm{\Gamma}} 
+\sum_{T\in \mathcal{T}_{h}} \left\|h^{1/2}p\right\|^2_{\partial T\cap \mathrm{\Omega}} }
\end{equation*}
and proving the assertion for $\vertiii{\mathcal{E}^kp-\Pi_hp}_h$.
}
\end{proof}

To prove the stability of the method, we will also need a continuity property for $\mathbf{\Pi}_h$  with respect to different norms. 
{\blue
\begin{lem}\label{interpolation_continuity_revised}
The vector-valued extended interpolation operator $\mathbf{\Pi}_h$ satisfies
\begin{equation}\label{interp_est_revised}
\vertiii{ \mathbf{\Pi}_h \mathbf{v} }_{V}\leq C_{\Pi} \left\|\mathbf{v}\right\|_{1,\mathrm{\Omega}},\,\,\, \textrm{for every}\,\,\,\mathbf{v} \in \Big[H_0^1(\mathrm{\Omega})\Big]^d,
\end{equation}
for some positive constant $C_\Pi$.
\end{lem}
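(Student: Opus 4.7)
The plan is to write $\mathbf{\Pi}_h\mathbf{v}=\mathcal{E}^1\mathbf{v}+(\mathbf{\Pi}_h\mathbf{v}-\mathcal{E}^1\mathbf{v})$ so that the boundary and jump contributions to $\vertiii{\cdot}_V$ become pure interpolation errors of the extended function. Two properties of the extension drive the argument: since $\mathbf{v}\in[H_0^1(\Omega)]^d$, the trace $\mathcal{E}^1\mathbf{v}|_\Gamma$ vanishes in $H^{1/2}(\Gamma)$; and since $\mathcal{E}^1\mathbf{v}\in[H^1(\mathbb{R}^d)]^d$, its jumps across every interior face vanish in the $H^{1/2}(F)$ sense. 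These two observations let us replace $\mathbf{\Pi}_h\mathbf{v}$ by $\mathbf{\Pi}_h\mathbf{v}-\mathcal{E}^1\mathbf{v}$ in the last two terms of the $V$-norm without any cost.

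I would then bound the three summands of $\vertiii{\mathbf{\Pi}_h\mathbf{v}}^2_V$ in turn. For the bulk term, the triangle inequality together with the local approximation estimate \eqref{interpolation_est1_revised} applied with $r=s=1$ gives $\|\mathbf{\Pi}_h\mathbf{v}\|_{1,T}\lesssim \|\mathcal{E}^1\mathbf{v}\|_{1,\Delta_T}$; summing over $T\in\mathcal{T}_h$ with bounded patch overlap and invoking the stability $\|\mathcal{E}^1\mathbf{v}\|_{1,\Omega_{\mathcal{T}}}\lesssim \|\mathbf{v}\|_{1,\Omega}$ of the extension yields the desired bound on $\|\nabla\mathbf{\Pi}_h\mathbf{v}\|_{\Omega_{\mathcal{T}}}$. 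For the Dirichlet penalty term, the vanishing trace reduces $\|h^{-1/2}\mathbf{\Pi}_h\mathbf{v}\|_\Gamma$ to $h^{-1/2}\|\mathcal{E}^1\mathbf{v}-\mathbf{\Pi}_h\mathbf{v}\|_\Gamma$; the global error bound \eqref{eq22} with $s=1,\,r=0$ then produces exactly the $h^{1/2}$ factor needed to absorb the $h^{-1/2}$ weight. The interior jump contribution is handled identically: $[\![\mathbf{\Pi}_h\mathbf{v}]\!]=[\![\mathbf{\Pi}_h\mathbf{v}-\mathcal{E}^1\mathbf{v}]\!]$ on each $F\in\mathcal{F}_h^{int}$, so the face estimate \eqref{interpolation_est2_revised} with $s=1,\,r=0$, followed by summation over faces using finite overlap of the patches $\Delta_F$, completes the bound.

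The only delicate point is the careful handling of the traces of the extension: one must argue that $\mathcal{E}^1\mathbf{v}|_\Gamma$ indeed vanishes (because the restriction to $\Omega$ coincides with $\mathbf{v}\in H_0^1(\Omega)$, whose trace is zero) and that $[\![\mathcal{E}^1\mathbf{v}]\!]_F=0$ (because $\mathcal{E}^1\mathbf{v}\in H^1$ of the ambient space). Beyond that, the argument is a routine bookkeeping combination of Lemma~\ref{approx_revised}, the extension stability, and shape regularity, with the constant $C_\Pi$ depending only on the polynomial degree $k$, the shape regularity of $\mathcal{B}_h$, and the operator norm of $\mathcal{E}^1$.
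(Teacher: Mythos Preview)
Your proposal is correct and follows essentially the same approach as the paper: the key decomposition $\mathbf{\Pi}_h\mathbf{v}=\mathcal{E}^1\mathbf{v}+(\mathbf{\Pi}_h\mathbf{v}-\mathcal{E}^1\mathbf{v})$, the vanishing of $\mathcal{E}^1\mathbf{v}|_\Gamma$ and of $[\![\mathcal{E}^1\mathbf{v}]\!]_F$, and the use of the Scott--Zhang approximation estimates together with extension stability are exactly the ingredients the paper employs. The only cosmetic differences are that the paper invokes the trace inequality \eqref{tr1a} directly for the $\Gamma$-term where you cite the derived global bound \eqref{eq22}, and the paper handles the bulk term more tersely by appealing to $H^1$-stability of $\mathbf{\Pi}_h$ rather than your explicit triangle-inequality route; neither difference is substantive.
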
 }
{\dv 
\begin{proof}
By definition,
$$
\vertiii{\mathbf{\Pi}_h \mathbf{v}}_{V}= \left\|\nabla  \mathbf{\Pi}_h\mathbf{v}\right\|^2_{\mathrm{\Omega}_{\mathcal{T}}}+   \left\|h^{-1/2}  \mathbf{\Pi}_h\mathbf{v}\right\|^2_{\mathrm{\Gamma}} + \sum_{F\in \mathcal{F}_{h}^{int}} \left\|h^{-1/2}[\![ \mathbf{\Pi}_h\mathbf{v} ]\!]\right\|^2_{F\cap \mathrm{\Omega}}.
$$
The bound for the first term follows directly from the definition of $\mathbf{\Pi}_h$ and the continuity of the extension operator $\mathcal{E}^1$. 
Making use of the trace inequality (\ref{tr1a}) and the fact that $\mathcal{E}^1\mathbf{v}|_{\mathrm{\Gamma}}=0$ for $ \mathbf{v} \in \left[H_0^1(\mathrm{\Omega})\right]^d$, the bound for the second term is evident. The bound for the third term  
\begin{align*}
\sum_{F\in \mathcal{F}_{h}^{int}} \left\|h^{-1/2}[\![ \mathbf{\Pi}_h\mathbf{v} ]\!]\right\|^2_{F\cap \mathrm{\Omega}}
&\leq \sum_{T \in \mathcal{T}_h}\sum_{F \subset \partial T} \left\|h^{-1/2} \left(\mathcal{E}^1\mathbf{v}-\mathbf{\Pi}_h\mathbf{v} \right)\right\|^2_{F\cap\mathrm{\mathrm{\Omega}}}\lesssim \left\|\mathbf{v}\right\|_{1,\mathrm{\Omega}},
\end{align*}
follows as well, due to (\ref{interpolation_est2_revised}).
\end{proof} 
}

\section{Stability estimates} \label{section4}

The fact that the discrete problem is well-posed follows by the inf--sup stability of the bilinear form $A_h+J_h$ in the formulation (\ref{cutdg}) with respect to the $\vertiii{\cdot}_{V,Q}$--norm.  We begin by investigating the properties of the separate forms which contribute to $A_h+J_h$.  

A useful observation is that the form $a_h(\cdot, \cdot)$, augmented by $j_u(\cdot, \cdot)$, is continuous and coercive with respect to the norm $\vertiii{\cdot }_{V}$. For its proof, we will make use of the fact that the ghost penalty term $j_u(\cdot, \cdot)$ extends the control from the physical domain $\mathrm{\Omega}$ to the entire active mesh; i.e., on the extended domain $\mathrm{\Omega}_{\mathcal{T}}$:

\begin{lem}[{\blue\cite{GM19,GSM20}} ]\label{ext}  
There are constants $C_{v}, C_{p}>0$, depending only on the
shape-regularity and the polynomial order and not on the mesh or the location of the boundary, such that the following estimates hold: 
\begin{equation}\label{ext_v}
\left\|\nabla \mathbf{v}_h\right\|^2_{\mathrm{\Omega}_{ \mathcal{T}} }  \leq C_{v} \left(\left\|\nabla \mathbf{v}_h\right\|^2_{\mathrm{\Omega}}+j_u(\mathbf{v}_h, \mathbf{v}_h)\right) \leq C_{v} \left\|\nabla \mathbf{v}_h\right\|^2_{\mathrm{\Omega}_{ \mathcal{T}} },  \,\, \textrm{for all } \mathbf{v}_h \in V_h
\end{equation}
and
\begin{equation}\label{ext_p}
\left\| p_h\right\|^2_{\mathrm{\Omega}_{ \mathcal{T}} }  \leq C_{p} \left(\left\| p_h\right\|^2_{\mathrm{\Omega}}+j_p(p_h, p_h)\right) \leq C_{p} \left\| p_h\right\|^2_{\mathrm{\Omega}_{ \mathcal{T}} }, \,\, \textrm{for all } p_h \in Q_h.
\end{equation}
\end{lem}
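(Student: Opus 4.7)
The plan is to establish each of the two double inequalities separately, with the easier upper bounds following from inverse estimates and the more delicate lower bounds resting on Assumption C together with a local polynomial patching argument. I would treat the velocity and pressure cases in parallel, since the structure of $j_u$ and $j_p$ is analogous (the weight $h^{2i-1}$ vs. $h^{2i+1}$ only affects which broken norm on the element sits on the right-hand side).

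For the \emph{upper bounds} (right-hand inequalities), I would proceed face by face. For each $F \in \mathcal{F}_G$ shared by elements $T, T' \in \mathcal{T}_h$, the trace-inverse estimate (\ref{deriv_estimate1}) gives, for $0 \leq i \leq k$,
\begin{equation*}
\bigl\|h_F^{i-1/2}\partial_{\mathbf{n}_F}^i v_{h,j}\bigr\|_{F}^2 \lesssim \bigl\|\nabla v_{h,j}\bigr\|_{T}^2,
\end{equation*}
and similarly for the $T'$ side. Summing over the finitely many faces of each element in $G_h$ and using shape-regularity yields $j_u(\mathbf{v}_h,\mathbf{v}_h) \lesssim \|\nabla \mathbf{v}_h\|_{\mathrm{\Omega}_{\mathcal{T}}}^2$, and since $\|\nabla \mathbf{v}_h\|_{\mathrm{\Omega}}^2 \leq \|\nabla \mathbf{v}_h\|_{\mathrm{\Omega}_{\mathcal{T}}}^2$ trivially, the right half of (\ref{ext_v}) follows. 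The analogous computation, invoking (\ref{deriv_estimate1}) with one less power of $h$, yields the right half of (\ref{ext_p}).

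The \emph{lower bounds} are the main obstacle and require the ``walking'' argument enabled by Assumption~C. The key auxiliary result I would prove (or invoke from~\cite{GM19,GSM20}) is a local patching estimate: for any two neighboring elements $T_1, T_2 \in \mathcal{T}_h$ sharing a face $F \in \mathcal{F}_h^{int}$ and any piecewise polynomial $w_h$ of degree $k$,
\begin{equation*}
\|\nabla w_h\|_{T_1}^2 \;\lesssim\; \|\nabla w_h\|_{T_2}^2 \,+\, \sum_{i=0}^{k} \int_{F} h_F^{2i-1}\bigl[\!\bigl[\partial_{\mathbf{n}_F}^i w_h\bigr]\!\bigr]^2 \mathrm{d}s.
\end{equation*}
The argument is polynomial and based on extending $w_h|_{T_2}$ as a polynomial of degree $k$ to $T_1$ and representing the discrepancy with $w_h|_{T_1}$ via its Taylor expansion across $F$: the coefficients of this discrepancy are precisely captured by the normal jumps $[\![\partial_{\mathbf{n}_F}^i w_h]\!]$, so equivalence of norms on the finite-dimensional polynomial space combined with a scaling argument yields the claim. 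The pressure counterpart reads the same way with $h_F^{2i+1}$ in place of $h_F^{2i-1}$ and $\|w_h\|_{T}^2$ in place of $\|\nabla w_h\|_{T}^2$.

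With this patching estimate in hand, I would iterate along the chain guaranteed by Assumption~C: for every $T \in G_h$ there exist $T = T_1, T_2, \ldots, T_N = T'$ with $T' \in \mathcal{T}_h \setminus G_h$ (hence $T' \subset \mathrm{\Omega}$) and each $T_j \cap T_{j+1} \in \mathcal{F}_h^{int} \cap \mathcal{F}_G$. Applying the patching estimate $N-1$ times and using that $N$ is uniformly bounded gives
\begin{equation*}
\|\nabla \mathbf{v}_h\|_{T}^2 \;\lesssim\; \|\nabla \mathbf{v}_h\|_{T'}^2 \,+\, \sum_{j=1}^{N-1}\, j_u^{(T_j\cap T_{j+1})}(\mathbf{v}_h,\mathbf{v}_h),
\end{equation*}
where $j_u^{(F)}$ denotes the contribution of face $F$ to $j_u$. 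Summing over $T \in G_h$ (with a finite-overlap bookkeeping that controls how many chains may traverse any given face or uncut element, a consequence of shape regularity and the bound on $N$) yields
\begin{equation*}
\|\nabla \mathbf{v}_h\|_{\mathrm{\Omega}_{\mathcal{T}} \setminus \mathrm{\Omega}}^2 \;\lesssim\; \|\nabla \mathbf{v}_h\|_{\mathrm{\Omega}}^2 + j_u(\mathbf{v}_h,\mathbf{v}_h),
\end{equation*}
after which adding $\|\nabla \mathbf{v}_h\|_{\mathrm{\Omega}}^2$ on both sides establishes the left inequality in (\ref{ext_v}). The pressure bound (\ref{ext_p}) is proved by the same chain argument using the pressure patching estimate.

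The subtle point I would flag is the finite-overlap accounting when passing from the local chainwise estimate to the global one: one must verify that each ghost-penalty face and each ``target'' uncut element is used by at most a bounded number of chains, which is precisely where the uniformity of $N$ in Assumption~C and the shape regularity of $\mathcal{B}_h$ combine to close the argument.
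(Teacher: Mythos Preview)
The paper does not supply its own proof of this lemma: it is stated with a citation to \cite{GM19,GSM20} and then immediately used. So there is no in-paper argument to compare against; your proposal is in fact the standard proof from those references, and the sketch you give (inverse/trace estimates for the easy direction, the local polynomial patching estimate combined with the walking argument of Assumption~C for the hard direction, plus a finite-overlap count) is correct in outline and matches what one finds in the cited works.

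One small clarification worth making explicit in your write-up: along the chain $T=T_1,\ldots,T_N=T'$ you should observe that every face $T_j\cap T_{j+1}$ crossed before reaching the first uncut element is indeed a face of some element in $G_h$, hence lies in $\mathcal{F}_G$ and contributes to $j_u$ (respectively $j_p$). This is immediate from the definition of $\mathcal{F}_G$ but is needed to justify that the accumulated face terms are bounded by the ghost penalty form.
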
 

With this preliminary result in place, we are now ready to prove:

\begin{lem}[Discrete coercivity of $a_h+j_u$]\label{a_coerc}
For suitably large  discontinuity penalization   parameter $\beta>0$ in the definition of the bilinear form $a_h(\cdot, \cdot)$, there exists a constant $c_a>0$, such that  
\begin{equation}\label{coercivit}
c_{a} \vertiii{\mathbf{v}_h}_{V}^2 \leq a_h(\mathbf{v}_h, \mathbf{v}_h)+j_u(\mathbf{v}_h, \mathbf{v}_h),\,\,\,\textrm{for any} \,\,\,\mathbf{v}_h \in V_h.
\end{equation} 
\end{lem}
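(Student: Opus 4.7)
\medskip

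\noindent\textbf{Proof proposal.} The strategy is the classical symmetric interior penalty coercivity argument, extended to the cut setting by replacing control on $\Omega$ with control on the active domain $\Omega_{\mathcal{T}}$ via the ghost penalty. I begin by setting $\mathbf{u}_h=\mathbf{v}_h$ in the definition of $a_h$, obtaining
\[
a_h(\mathbf{v}_h,\mathbf{v}_h) = \|\nabla\mathbf{v}_h\|_{\Omega}^2 - 2\,T_1 - 2\,T_2 + \beta h^{-1}\|\mathbf{v}_h\|^2_{\Gamma} + \beta h^{-1}\sum_{F\in\mathcal{F}_h^{int}}\|[\![\mathbf{v}_h]\!]\|^2_{F\cap\Omega},
\]
where $T_1=\sum_F\int_{F\cap\Omega}\{\nabla\mathbf{v}_h\}\cdot\mathbf{n}_F[\![\mathbf{v}_h]\!]\,\mathrm{d}s$ gathers the interior consistency terms and $T_2=\int_{\Gamma}\mathbf{v}_h\,\nabla\mathbf{v}_h\cdot\mathbf{n}_{\Gamma}\,\mathrm{d}s$ the Nitsche boundary term. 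The two remaining off-diagonal pieces will be controlled by Cauchy--Schwarz and Young's inequality with a free parameter $\epsilon>0$:
\[
2|T_1| \le \epsilon\sum_{F}\|h^{1/2}\{\nabla\mathbf{v}_h\}\|^2_{F\cap\Omega} + \tfrac{1}{\epsilon}\sum_{F}\|h^{-1/2}[\![\mathbf{v}_h]\!]\|^2_{F\cap\Omega},
\]
and analogously for $T_2$ producing $\epsilon\|h^{1/2}\nabla\mathbf{v}_h\cdot\mathbf{n}_{\Gamma}\|^2_{\Gamma}+\tfrac{1}{\epsilon}\|h^{-1/2}\mathbf{v}_h\|^2_{\Gamma}$.

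Next I use the discrete inverse/trace estimates~(\ref{deriv_estimate1}) and~(\ref{deriv_estimate2}), applied with $i=j=0$ to the polynomial $\nabla\mathbf{v}_h|_T$ on each element $T\in\mathcal{T}_h$. Element-wise this yields $\|h^{1/2}\nabla\mathbf{v}_h\|_{F}^2\lesssim\|\nabla\mathbf{v}_h\|_{T}^2$ and $\|h^{1/2}\nabla\mathbf{v}_h\cdot\mathbf{n}_{\Gamma}\|^2_{\Gamma\cap T}\lesssim\|\nabla\mathbf{v}_h\|_{T}^2$; summing over $T$, both face contributions are bounded by $C_I\|\nabla\mathbf{v}_h\|^2_{\Omega_{\mathcal{T}}}$ with a constant $C_I$ depending only on shape regularity and $k$. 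Crucially, the right-hand side is a norm over the \emph{full} (uncut) elements, which is why the bound is robust with respect to the cut configuration. Substituting these estimates gives
\[
a_h(\mathbf{v}_h,\mathbf{v}_h) \ge \|\nabla\mathbf{v}_h\|^2_{\Omega} - C_I\epsilon\,\|\nabla\mathbf{v}_h\|^2_{\Omega_{\mathcal{T}}} + \bigl(\beta-\tfrac{1}{\epsilon}\bigr)\Bigl(\|h^{-1/2}\mathbf{v}_h\|^2_{\Gamma} + \sum_F\|h^{-1/2}[\![\mathbf{v}_h]\!]\|^2_{F\cap\Omega}\Bigr).
\]

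At this point one is stuck with $\|\nabla\mathbf{v}_h\|^2_{\Omega}$ while the target norm $\vertiii{\cdot}_V$ demands $\|\nabla\mathbf{v}_h\|^2_{\Omega_{\mathcal{T}}}$, and closing this gap is the only place where the cut geometry really bites: on arbitrarily small intersections the physical contribution can be much smaller than the extended one. This is precisely what the velocity ghost penalty repairs through the left inequality of Lemma~\ref{ext}, which gives $\|\nabla\mathbf{v}_h\|^2_{\Omega_{\mathcal{T}}}\le C_v\bigl(\|\nabla\mathbf{v}_h\|^2_{\Omega}+j_u(\mathbf{v}_h,\mathbf{v}_h)\bigr)$. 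Adding $j_u(\mathbf{v}_h,\mathbf{v}_h)$ on both sides of the previous bound and inserting this estimate gives
\[
a_h(\mathbf{v}_h,\mathbf{v}_h) + j_u(\mathbf{v}_h,\mathbf{v}_h) \ge \bigl(\tfrac{1}{C_v}-C_I\epsilon\bigr)\|\nabla\mathbf{v}_h\|^2_{\Omega_{\mathcal{T}}} + \bigl(\beta-\tfrac{1}{\epsilon}\bigr)\Bigl(\|h^{-1/2}\mathbf{v}_h\|^2_{\Gamma} + \sum_F\|h^{-1/2}[\![\mathbf{v}_h]\!]\|^2_{F\cap\Omega}\Bigr).
\]
To conclude, I first fix $\epsilon>0$ small enough that $\tfrac{1}{C_v}-C_I\epsilon>0$, and then choose $\beta$ large enough that $\beta-\tfrac{1}{\epsilon}>0$. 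Setting $c_a$ to be the minimum of these two constants recovers~(\ref{coercivit}), and the dependence of the penalty parameter $\beta$ on $C_v$, $C_I$ is tracked through this choice. The only real obstacle is ensuring the inverse-trace bounds on $\Gamma$ are applied to a quantity whose geometric constant is independent of the cut; this is automatic because the inverse inequalities~(\ref{deriv_estimate1})--(\ref{deriv_estimate2}) bound face/interface norms by full-element volume norms, and the transfer from $\Omega$ to $\Omega_{\mathcal{T}}$ is handled exclusively by the ghost penalty via Lemma~\ref{ext}.
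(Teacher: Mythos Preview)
Your proof is correct and follows essentially the same route as the paper: expand $a_h(\mathbf{v}_h,\mathbf{v}_h)$, control the consistency/Nitsche cross terms by Young's inequality with parameter $\epsilon$, bound the resulting face and boundary flux terms via the inverse/trace estimates by $\|\nabla\mathbf{v}_h\|^2_{\Omega_{\mathcal{T}}}$, and then invoke the ghost-penalty norm equivalence of Lemma~\ref{ext} to convert $\|\nabla\mathbf{v}_h\|^2_{\Omega}+j_u(\mathbf{v}_h,\mathbf{v}_h)$ into $C_v^{-1}\|\nabla\mathbf{v}_h\|^2_{\Omega_{\mathcal{T}}}$ before choosing $\epsilon$ and $\beta$. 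The only cosmetic difference is that the paper splits your constant $C_I$ into two pieces $\widehat{C}_1$ and $\widehat{C}_2$ coming from the interior-face and $\Gamma$-face estimates separately.
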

\begin{proof}
The proof follows closely the standard arguments for the usual symmetric interior penalty method. More precisely, for any $\epsilon \in \mathbb{R}_{+}$, we have 
{\blue
\begin{align}
& a_h(\mathbf{v}_h, \mathbf{v}_h)+j_u(\mathbf{v}_h, \mathbf{v}_h) = \left\|\nabla \mathbf{v}_h\right\|^2_{\mathrm{\Omega}}+j_u(\mathbf{v}_h, \mathbf{v}_h) +
\notag\\
&\qquad\qquad\qquad\qquad\qquad+ \beta \Big(\left\|h^{-1/2}\mathbf{v}_h\right\|^2_{\mathrm{\Gamma}}+ \sum_{F\in \mathcal{F}_h^{int}}\left\|h^{-1/2}[\![\mathbf{v}_h ]\!]\right\|^2_{F\cap \mathrm{\Omega}}\Big) -\notag \\
&\qquad\qquad\qquad\qquad\qquad
-2\sum_{F\in \mathcal{F}_h^{int}}\int_{F\cap \mathrm{\Omega}}\left\{\nabla \mathbf{v}_h\right\}\cdot \mathbf{n}_F [\![\mathbf{v}_h ]\!] \,\,\mathrm{d}s - 2\int_{\mathrm{\Gamma}}\mathbf{v}_h  \nabla \mathbf{v}_h\cdot \mathbf{n}_{\mathrm{\Gamma}} \,\,\mathrm{d}s \notag \\
&\geq  \left\|\nabla \mathbf{v}_h\right\|^2_{ \mathrm{\Omega}  }+j_u(\mathbf{v}_h, \mathbf{v}_h)+\beta\Big( \left\|h^{-1/2}\mathbf{v}_h\right\|^2_{\mathrm{\Gamma}}+\sum_{F\in \mathcal{F}_h^{int}}\left\|h^{-1/2}[\![\mathbf{v}_h ]\!]\right\|^2_{F\cap  \mathrm{\Omega}} \Big) - \notag \\
& \qquad\quad\quad\quad\quad
-  \epsilon \sum_{F\in \mathcal{F}_h^{int}}\left\|h^{1/2}\left\{\nabla\mathbf{v}_h\right\}\cdot \mathbf{n}_F\right\|^2_{F\cap \mathrm{\Omega}}- \epsilon^{-1} \sum_{F\in \mathcal{F}_h^{int}}\left\|h^{-1/2}[\![\mathbf{v}_h ]\!]\right\|^2_{F\cap \mathrm{\Omega}} - \notag \\
& \qquad\quad\quad\quad\quad 
-  \epsilon   \left\|h^{1/2}\nabla\mathbf{v}_h\cdot \mathbf{n}_{\mathrm{\Gamma}}\right\|^2_{\mathrm{\Gamma}}-\epsilon^{-1} \left\| h^{-1/2}\mathbf{v}_h \right\|^2_{\mathrm{\Gamma}} \notag \\
& \geq \left\|\nabla \mathbf{v}_h\right\|^2_{ \mathrm{\Omega}  }+j_u(\mathbf{v}_h, \mathbf{v}_h)+(\beta-\epsilon^{-1})\Big( \left\|h^{-1/2}\mathbf{v}_h\right\|^2_{\mathrm{\Gamma}}+ \sum_{F\in \mathcal{F}_h^{int}}\left\|h^{-1/2}[\![\mathbf{v}_h ]\!]\right\|^2_{F\cap \mathrm{\Omega}}\Big)- \notag\\
& \qquad\quad\quad\quad\quad 
-\epsilon\Big(\sum_{F\in \mathcal{F}_h^{int}}\left\|h^{1/2}\left\{\nabla\mathbf{v}_h\right\}\cdot \mathbf{n}_F\right\|^2_{F\cap \mathrm{\Omega}} +  \left\|h^{1/2}\nabla\mathbf{v}_h\cdot \mathbf{n}_{\mathrm{\Gamma}}\right\|^2_{\mathrm{\Gamma}}\Big). \label{coerc}
\end{align}
}
A lower bound for the latter term in (\ref{coerc}) is readily obtained through the inverse estimates (\ref{deriv_estimate1}) and (\ref{deriv_estimate2}).  
In particular, note for $F \in \mathcal{F}_h^{int}$ with $F=\partial T \cap \partial T^{'}$ that 
\begin{align*}
\left\|h^{1/2}\left\{\nabla\mathbf{v}_h\right\}\cdot \mathbf{n}_F\right\|_{F\cap \mathrm{\Omega}}
& \leq \frac{1}{2}\left(\left\|h^{1/2}\nabla\mathbf{v}_h\cdot \mathbf{n}_F\right\|_{F\subset \partial T  }+\left\|h^{1/2}\nabla\mathbf{v}_h\cdot \mathbf{n}_F\right\|_{F\subset \partial T^{'} }\right) \\
& \lesssim \max_{i=T,T^{'}}\left\{\left\|\nabla{\mathbf{v}_h}\right\|_{i}\right\}
\end{align*}
and then summing over all interior faces in the active mesh, we estimate
\begin{equation}\label{par1}
\sum_{F\in \mathcal{F}_h^{int}}\left\|h^{1/2}\left\{\nabla\mathbf{v}_h\right\}\cdot \mathbf{n}_F\right\|^2_{F\cap \mathrm{\Omega}}
\lesssim   \left\|\nabla \mathbf{v}_h\right\|_{\mathrm{\Omega}_{\mathcal{T}}}^2.
\end{equation}
Likewise, using (\ref{deriv_estimate2})
\begin{equation}\label{par2}
\left\|h^{1/2}\nabla\mathbf{v}_h\cdot \mathbf{n}_{\mathrm{\Gamma}}\right\|^2_{\mathrm{\Gamma}} = \sum_{T\cap \mathrm{\Gamma}\neq \emptyset}\left\|h^{1/2}\nabla\mathbf{v}_h\cdot \mathbf{n}_{\mathrm{\Gamma}}\right\|^2_{T\cap \mathrm{\Gamma}}  \lesssim \sum_{T\cap \mathrm{\Gamma}\neq \emptyset}\left\|\nabla\mathbf{v}_h\right\|^2_{T}  \lesssim  \left\|\nabla \mathbf{v}_h\right\|^2_{\mathrm{\Omega}_{\mathcal{T}}}. 
\end{equation}
Then, application of (\ref{ext_v}) verifies, for a suitable   choice of $\epsilon$, that  the terms in (\ref{par1}) and (\ref{par2}) can be  dominated by the leading two terms in (\ref{coerc}). Indeed, letting $\widehat{C}_1$ and  $\widehat{C}_2$ the constants in (\ref{par1}) and (\ref{deriv_estimate2}) respectively and  collecting all estimates, we conclude 
\begin{eqnarray*}
a_h(\mathbf{v}_h, \mathbf{v}_h)+j_u(\mathbf{v}_h, \mathbf{v}_h) 
& \geq & \left(C_{v}^{-1}-\epsilon (\widehat{C}_1+\widehat{C}_2)\right)\left\|\nabla \mathbf{v}_h\right\|^2_{\mathrm{\Omega}_{\mathcal{T}}} \\
& & +(\beta-\epsilon^{-1})\Big( \left\|h^{-1/2}\mathbf{v}_h\right\|^2_{\mathrm{\Gamma}}+ \sum_{F\in \mathcal{F}_h^{int}}\left\|h^{-1/2}[\![\mathbf{v}_h ]\!]\right\|^2_{F\cap \mathrm{\Omega}}\Big).
\end{eqnarray*}
Coercivity (\ref{coercivit}) is already {\blue satisfied}  for  $\beta> \epsilon^{-1}>C_v(\widehat{C}_1+\widehat{C}_2)$. The  corresponding coercivity constant is $c_a=\min\big\{C_{v}^{-1}-\epsilon (\widehat{C}_1+\widehat{C}_2), \beta-\epsilon^{-1}\big\}$.
\end{proof}
{\blue
\begin{lem}[Continuity]\label{a_bound_revised}
Let $V_{*}=[H^{k+1}(\mathrm{\Omega})\cap H^1_0(\mathrm{\Omega})]^d$ and $Q_{*}=H^k(\mathrm{\Omega})\cap L^2_0(\mathrm{\Omega})$. 
{\blue Then there exist constants $C_a , C_b>0$, such that} 
\begin{eqnarray}
[a_h+ j_u](\mathbf{u}_h, \mathbf{v}_h) &\leq & C_a \vertiii{\mathbf{u}_h}_{V}\cdot \vertiii{\mathbf{v}_h}_{V},\quad \forall\,\, \mathbf{u}_h, \mathbf{v}_h \in V_h,\label{cont3_revised}\\
a_h(\mathbf{u}, \mathbf{v}_h) &\leq & C_a \vertiii{\mathbf{u}}\cdot  \vertiii{\mathbf{v}_h}, \,\,\,
\forall\,\, (\mathbf{u}, \mathbf{v}_h) \in(V_*+V_h)\times V_h, \label{cont1_revised}\\
b_h(\mathbf{u}, p_h) & \leq & C_b \vertiii{\mathbf{u}} \cdot \vertiii{p_h},\,\,\, 
\forall\,\, (\mathbf{u}, p_h) \in(V_*+V_h)\times Q_h, \label{cont2_revised} \\
b_h(\mathbf{u}_h, p) & \leq & C_b \vertiii{\mathbf{u}_h} \cdot \vertiii{p}, \,\,\, \forall\,\, (\mathbf{u}_h, p) \in V_h\times (Q_*+ Q_h). \label{cont4_revised}
\end{eqnarray} 
\end{lem}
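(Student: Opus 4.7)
The plan is to bound each of the four inequalities by expanding the relevant bilinear form into its defining integrals, applying the Cauchy--Schwarz inequality term by term, and splitting the integrands by carefully chosen powers of $h$ so that each resulting factor matches a component of either the mesh-dependent norm $\vertiii{\cdot}$ (for rough arguments) or the extended-domain norm $\vertiii{\cdot}_V$, $\vertiii{\cdot}_Q$ (for discrete arguments, where inverse trace estimates are available). The key structural observation is that $\vertiii{\cdot}$ was defined to explicitly include the face-gradient contributions $\|h^{1/2}\nabla\mathbf{v}\cdot\mathbf{n}_{\mathrm{\Gamma}}\|_{\mathrm{\Gamma}}$ and $\sum_{T}\|h^{1/2}\nabla\mathbf{v}|_T\cdot\mathbf{n}_{\partial T}\|^2_{\partial T\cap\mathrm{\Omega}}$ precisely so that rough factors can be absorbed without any further inverse estimate; conversely, $\vertiii{\cdot}_V$ lacks these terms, which is why inverse estimates will be essential in the all-discrete case.

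For the all-discrete estimate \eqref{cont3_revised}, I would treat each of the seven contributions to $a_h(\mathbf{u}_h,\mathbf{v}_h)$ separately. The diffusion volume integral, together with the $\beta h^{-1}$-weighted face-jump and boundary penalties, are controlled directly by matching squared components of $\vertiii{\cdot}_V$. The consistency terms of the form $\{\nabla\mathbf{u}_h\}\cdot\mathbf{n}_F\,[\![\mathbf{v}_h]\!]$ would be split as $(h^{1/2}\{\nabla\mathbf{u}_h\}\cdot\mathbf{n}_F)\cdot(h^{-1/2}[\![\mathbf{v}_h]\!])$; the second factor matches $\vertiii{\mathbf{v}_h}_V$ immediately, while the first is bounded by $\lesssim\|\nabla\mathbf{u}_h\|_{\mathrm{\Omega}_{\mathcal{T}}}$ through the inverse-trace computation already carried out in \eqref{par1}--\eqref{par2}. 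The same splitting at $\mathrm{\Gamma}$ handles the two boundary consistency terms. For the ghost penalty contribution I would apply Cauchy--Schwarz on the semi-definite form $j_u$, namely $j_u(\mathbf{u}_h,\mathbf{v}_h)\leq j_u(\mathbf{u}_h,\mathbf{u}_h)^{1/2} j_u(\mathbf{v}_h,\mathbf{v}_h)^{1/2}$, and then invoke the upper estimate in Lemma \ref{ext} to obtain $j_u(\mathbf{w}_h,\mathbf{w}_h)\lesssim \|\nabla\mathbf{w}_h\|^2_{\mathrm{\Omega}_{\mathcal{T}}}\leq\vertiii{\mathbf{w}_h}_V^2$, closing the argument.

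The remaining estimates \eqref{cont1_revised}--\eqref{cont4_revised} follow the same pattern, the only difference being that whenever a rough factor $\mathbf{u}\in V_*+V_h$ or $p\in Q_*+Q_h$ appears, its face-normal gradient or its trace is measured through the corresponding built-in components of $\vertiii{\cdot}$ rather than recovered by an inverse estimate. For $b_h$, the three contributions produce pairings like $\|p_h\|_{\mathrm{\Omega}}\|\nabla\mathbf{v}_h\|_{\mathrm{\Omega}}$, $\sum_F\|h^{-1/2}[\![\mathbf{v}_h]\!]\|_{F\cap\mathrm{\Omega}}\|h^{1/2}\{p_h\}\|_{F\cap\mathrm{\Omega}}$ and $\|h^{-1/2}\mathbf{v}_h\|_{\mathrm{\Gamma}}\|h^{1/2}p_h\|_{\mathrm{\Gamma}}$, each matching the appropriate pair of norm components. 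The main bookkeeping challenge, which I expect to be the only real difficulty, is keeping straight for each factor whether it should be matched against $\vertiii{\cdot}$ or $\vertiii{\cdot}_V$: inverse trace inequalities from \eqref{deriv_estimate1}--\eqref{deriv_estimate2} must be invoked only on discrete factors, while the face-gradient and face-trace components of $\vertiii{\cdot}$ must be invoked precisely when the factor is rough. Once this matching is handled carefully, the constants $C_a$, $C_b$ emerge as products of the shape-regularity constants from \eqref{deriv_estimate1}--\eqref{deriv_estimate3}, the bound $C_v$ from Lemma \ref{ext} and the user parameters $\beta$, $\gamma_{\mathbf{u}}$.
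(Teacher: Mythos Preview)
Your proposal is correct and follows exactly the standard route the paper has in mind; indeed, the paper omits the proof entirely with the remark ``The proof is standard and it is omitted for brevity.'' The term-by-term Cauchy--Schwarz splitting with $h^{\pm 1/2}$ weights, the use of the inverse trace estimates \eqref{deriv_estimate1}--\eqref{deriv_estimate2} on discrete factors (as in \eqref{par1}--\eqref{par2}), and the absorption of rough face-gradient and face-trace contributions into the explicitly built-in components of $\vertiii{\cdot}$ is precisely the intended argument.
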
}

\begin{proof}
The proof is standard and it is omitted for brevity. 
\end{proof} 

\begin{lem}[Stability for $b_h$]\label{stab_b}
There exists $C >0$,  such that for every $p_h \in Q_h$ we have
\begin{equation}\label{b_stab}
C  \left\|p_h\right\|_{\mathrm{\Omega}}\leq  \sup_{\mathbf{w}_h \in V_h\backslash\left\{0\right\}}\frac{b_h(\mathbf{w}_h,p_h)}{\vertiii{\mathbf{w}_h}_{V}}+  k_{\mathcal{T}}(p_h),
\end{equation}
where $k_{\mathcal{T}}(p_h):=\left(\sum_{T\in \mathcal{T}_h} \left\|h_T\nabla p_h\right\|^2_{{\blue T\cap\mathrm{\Omega}}}\right)^{1/2}+
\Big(\sum_{F\in \mathcal{F}_h^{int}}\left\|h_F^{1/2} [\![p_h ]\!]\right\|^2_{F\cap \mathrm{\Omega}}\Big)^{1/2}$.
\end{lem}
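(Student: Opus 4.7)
The plan is to mount a Fortin-type argument anchored on the continuous inf--sup condition for the Stokes system on $\Omega$. Fix $p_h\in Q_h$; since $p_h|_\Omega$ has vanishing mean, the continuous inf--sup condition produces $\mathbf{v}\in[H_0^1(\Omega)]^d$ with $-\nabla\cdot\mathbf{v}=p_h$ on $\Omega$ and $\|\mathbf{v}\|_{1,\Omega}\leq C\|p_h\|_\Omega$. I would test the supremum in (\ref{b_stab}) with the extended Scott--Zhang interpolant $\mathbf{w}_h=\mathbf{\Pi}_h\mathbf{v}\in V_h$. By Lemma \ref{interpolation_continuity_revised}, $\vertiii{\mathbf{\Pi}_h\mathbf{v}}_V\leq C_\Pi\|\mathbf{v}\|_{1,\Omega}\leq C'\|p_h\|_\Omega$, which ultimately provides the denominator of the supremum.

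Next, I would exploit the equivalent form (\ref{b_alt}) of $b_h$ to write
$b_h(\mathbf{\Pi}_h\mathbf{v},p_h)=\int_\Omega \mathbf{\Pi}_h\mathbf{v}\cdot\nabla p_h\,\mathrm{d}\mathbf{x}-\sum_{F\in\mathcal{F}_h^{int}}\int_{F\cap\Omega}\{\mathbf{\Pi}_h\mathbf{v}\}\cdot\mathbf{n}_F[\![p_h]\!]\,\mathrm{d}s$.
Adding and subtracting $\mathbf{v}$ in the two integrals, the ``main'' piece becomes $\int_\Omega \mathbf{v}\cdot\nabla p_h-\sum_F\int_{F\cap\Omega}\mathbf{v}\cdot\mathbf{n}_F[\![p_h]\!]\,\mathrm{d}s$. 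Applying element-wise integration by parts to the volume term — with no $\Gamma$-contribution thanks to $\mathbf{v}|_\Gamma=0$ — collapses the face terms against each other and yields $-\int_\Omega p_h\,\nabla\cdot\mathbf{v}\,\mathrm{d}\mathbf{x}=\|p_h\|_\Omega^2$, so that $b_h(\mathbf{\Pi}_h\mathbf{v},p_h)$ equals $\|p_h\|_\Omega^2$ plus two interpolation-error corrections, namely $\int_\Omega(\mathbf{\Pi}_h\mathbf{v}-\mathbf{v})\cdot\nabla p_h\,\mathrm{d}\mathbf{x}$ and $\sum_F\int_{F\cap\Omega}\{\mathbf{v}-\mathbf{\Pi}_h\mathbf{v}\}\cdot\mathbf{n}_F[\![p_h]\!]\,\mathrm{d}s$.

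Each correction is then controlled through Cauchy--Schwarz and the local approximation estimates of Lemma \ref{approx_revised}. The volume correction uses $\|\mathbf{v}-\mathbf{\Pi}_h\mathbf{v}\|_T\lesssim h_T\|\mathbf{v}\|_{1,\Delta_T}$ (from (\ref{interpolation_est1_revised}) with $s=1$, $r=0$) and is bounded by $\|\mathbf{v}\|_{1,\Omega}\bigl(\sum_T\|h_T\nabla p_h\|_{T\cap\Omega}^2\bigr)^{1/2}$. The face correction uses the trace-type bound $\|\mathbf{v}-\mathbf{\Pi}_h\mathbf{v}\|_F\lesssim h_F^{1/2}\|\mathbf{v}\|_{1,\Delta_F}$ from (\ref{interpolation_est2_revised}) and is bounded by $\|\mathbf{v}\|_{1,\Omega}\bigl(\sum_F\|h_F^{1/2}[\![p_h]\!]\|_{F\cap\Omega}^2\bigr)^{1/2}$. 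Both are absorbed into $\|\mathbf{v}\|_{1,\Omega}\cdot k_\mathcal{T}(p_h)\leq C\|p_h\|_\Omega\cdot k_\mathcal{T}(p_h)$, leaving $b_h(\mathbf{\Pi}_h\mathbf{v},p_h)\geq \|p_h\|_\Omega^2-C\|p_h\|_\Omega\,k_\mathcal{T}(p_h)$.

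Dividing by $\vertiii{\mathbf{\Pi}_h\mathbf{v}}_V\leq C'\|p_h\|_\Omega$ and rearranging produces the claim with constant $C=1/C'$. The main delicate step — and the one I would expect to absorb most of the care — is the discrete integration-by-parts cleanup, ensuring that (i) the Nitsche-type $\Gamma$-contributions are absent because the lifting lives in $H_0^1(\Omega)$, so no boundary trace of the interpolant survives to pollute the leading $\|p_h\|_\Omega^2$ term, and (ii) the two face terms involving $\mathbf{v}$ cancel exactly while the residual face terms line up with precisely the jump weights appearing in $k_\mathcal{T}(p_h)$. Everything else reduces to routine bookkeeping with the approximation and trace bounds already available.
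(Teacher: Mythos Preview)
Your proposal is correct and follows essentially the same approach as the paper: both arguments take the velocity lifting $\mathbf{v}\in[H_0^1(\Omega)]^d$ of $p_h$ from the continuous inf--sup condition, test with its extended Scott--Zhang interpolant $\mathbf{\Pi}_h\mathbf{v}$, use the integrated--by--parts form (\ref{b_alt}) of $b_h$, exploit $\mathbf{v}|_\Gamma=0$ and $[\![\mathbf{v}]\!]=0$ to isolate $\|p_h\|_\Omega^2$, and control the two interpolation-error residuals with (\ref{interpolation_est1_revised}), (\ref{interpolation_est2_revised}) together with Lemma~\ref{interpolation_continuity_revised}. The only cosmetic difference is that the paper starts from $\|p_h\|_\Omega^2$ and inserts $\mathbf{\Pi}_h\mathbf{v}$ into the resulting identity, whereas you start from $b_h(\mathbf{\Pi}_h\mathbf{v},p_h)$ and integrate back; the three terms and their estimates coincide. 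One small point to tidy up in your final step: dividing $b_h(\mathbf{\Pi}_h\mathbf{v},p_h)\geq\|p_h\|_\Omega^2-C\|p_h\|_\Omega\,k_{\mathcal{T}}(p_h)$ by the \emph{upper} bound $\vertiii{\mathbf{\Pi}_h\mathbf{v}}_V\leq C'\|p_h\|_\Omega$ is only valid when the right-hand side is nonnegative; cover the complementary case $\|p_h\|_\Omega< C\,k_{\mathcal{T}}(p_h)$ separately (it is immediate since the supremum is nonnegative).
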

\begin{proof}
Consider a fixed $p_h \in Q_h$. Owing to the surjectivity of the divergence operator, there exists a corresponding $\mathbf{v}_{p_h} \in \left[H_0^1(\mathrm{\Omega})\right]^d$, such that 
\begin{equation}\label{est}
\nabla \cdot \mathbf{v}_{p_h} = p_h \quad \textrm{and} \quad C_{\mathrm{\Omega}}\left\|\mathbf{v}_{p_h}\right\|_{1, \mathrm{\Omega}}\leq \left\|p_h\right\|_{\mathrm{\Omega}}
\end{equation}
for some constant $C_{\mathrm{\Omega}}>0$. The field $\mathbf{v}_{p_h}$ is typically referred to as the \itshape{velocity lifting} \normalfont of $p_h$. Then, element--wise integration by parts yields 
\begin{align*}
\left\|p_h\right\|_{\mathrm{\Omega}}^2 
& = \int_{\mathrm{\Omega}}p_h\left(\nabla \cdot \mathbf{v}_{p_h}\right)\,{\dv \mathrm{d}\mathbf{x}}
= -\int_{\mathrm{\Omega}}\mathbf{v}_{p_h}\nabla p_h\, \mathrm{d}\mathbf{x}
+ {\blue \sum_{T \in \mathcal{T}_h}\int_{ \partial T\cap \mathrm{\Omega}} \left(\mathbf{v}_{p_h}\cdot \mathbf{n}_{ T}\right)p_h\,{\mathrm{d}s} +}\\
&\qquad\qquad\qquad\qquad\qquad\qquad\qquad\qquad\qquad\qquad\qquad\quad {\blue + \int_{\mathrm{\Gamma}} \left(\mathbf{v}_{p_h}\cdot \mathbf{n}_{ \mathrm{\Gamma}}\right)p_h\,{\mathrm{d}s}} \\
& = -\int_{\mathrm{\Omega}}\mathbf{v}_{p_h}\nabla p_h\,{ \mathrm{d}\mathbf{x}} 
+ \sum_{F\in  \mathcal{F}_h^{int}}\int_{F\cap \mathrm{\Omega}} \left\{ \mathbf{v}_{p_h}\right\}\cdot \mathbf{n}_F [\![p_h ]\!]\,{\mathrm{d}s}+ \\
&\qquad\qquad\quad + \sum_{F\in \mathcal{F}_h^{int}}\int_{F\cap \mathrm{\Omega}}[\![\mathbf{v}_{p_h}]\!]  \cdot \mathbf{n}_F  \left\{ p_h \right\}\,{ \mathrm{d}s}
+{\blue \sum_{T\in\mathcal{T}_h}\int_{\mathrm{\Gamma}\cap T} \left(\mathbf{v}_{p_h}\cdot \mathbf{n}_{\mathrm{\Gamma}}\right)p_h\,{\mathrm{d}s}}\\
& = -\int_{\mathrm{\Omega}}\mathbf{v}_{p_h}\nabla p_h\,{ \mathrm{d}\mathbf{x}} 
+ \sum_{F\in  \mathcal{F}_h^{int}}\int_{F\cap \mathrm{\Omega}} \left\{ \mathbf{v}_{p_h}\right\}\cdot \mathbf{n}_F [\![p_h ]\!]\,{\mathrm{d}s}.
\end{align*}
Here, we have used the fact that $\mathbf{v}_{p_h}$ and  $ [\![\mathbf{v}_{p_h}]\!]$  vanish on $\mathrm{\Gamma}$ and on $F \in \mathcal{F}_h^{int}$, respectively, due to $\mathbf{v}_{p_h} \in \left[H_0^1(\mathrm{\Omega})\right]^d$ being an element of the continuous space. {\dv Using the vector--valued extended interpolation operator $\mathbf{\Pi}_h: \big[L^2(\mathrm{\Omega})\big]^d\to V_h$ and introducing the corresponding approximation error $\mathbf{e}_h:=\mathbf{\Pi}_h\mathbf{v}_{p_h}-\mathbf{v}_{p_h}$ for $\mathbf{v}_{p_h} \mapsto \mathbf{\Pi}_h \mathbf{v}_{p_h} \in V_h$} in the previous expression, we obtain
\begin{align}
\left\|p_h\right\|_{\mathrm{\Omega}} ^2 
&= \int_{\mathrm{\Omega}}\mathbf{e}_{h}\nabla p_h\,{\blue \mathrm{d}\mathbf{x}}
-\int_{\mathrm{\Omega}}{\dv\mathbf{\Pi}}_h\mathbf{v}_{p_h}\nabla p_h\,{\blue \mathrm{d}\mathbf{x}}
+\sum_{F\in  \mathcal{F}_h^{int}}\int_{F\cap \mathrm{\Omega}} \left\{ \mathbf{v}_{p_h}\right\}\cdot \mathbf{n}_{F} [\![p_h ]\!]\,{\blue \mathrm{d}s} \notag \\
&\stackrel{(\ref{b_alt})}{=}  
\int_{\mathrm{\Omega}}\mathbf{e}_{h}\nabla p_h\, \mathrm{d}\mathbf{x}  
- b_h({\dv\mathbf{\Pi}}_h\mathbf{v}_{p_h}, p_h)- \sum_{F\in  \mathcal{F}_h^{int}}\int_{F\cap \mathrm{\Omega}} \left\{   \mathbf{e}_{h}\right\}\cdot \mathbf{n}_F [\![p_h ]\!]\,{\blue \mathrm{d}s}\nonumber \\
&= \mathbb{I}_1+\mathbb{I}_2+\mathbb{I}_3. \label{rearr}
\end{align}
For the first term, the Cauchy--Schwarz inequality, {\blue the estimates \eqref{interpolation_est1_revised}, \eqref{eq22}} and \eqref{est} imply
\begin{eqnarray}
\left|\mathbb{I}_1\right|&
\leq & \Big(\sum_{T\in \mathcal{T}_h}\left\|h_T^{-1}\mathbf{e}_h\right\|^2_{{\blue T\cap\mathrm{\Omega}}}\Big)^{1/2} \Big(\sum_{T\in \mathcal{T}_h} \left\| h_T \nabla p_h\right\|^2_{{\blue T\cap\mathrm{\Omega}}}\Big)^{1/2} \notag \\
& \lesssim & \left\|\mathbf{v}_{p_h}\right\|_{1,\mathrm{\Omega}}\Big(\sum_{T\in \mathcal{T}_h}\left\|h_T\nabla p_h\right\|^2_{{\blue T\cap\mathrm{\Omega}}}\Big)^{1/2} 
\lesssim C_\mathrm{\Omega}^{-1} \left\|p_h\right\|_{\mathrm{\Omega}}\Big(\sum_{T\in \mathcal{T}_h}\left\|h_T\nabla p_h\right\|^2_{{\blue T\cap\mathrm{\Omega}}}\Big)^{1/2} \label{i1}
\end{eqnarray}
Owing to the continuity property of the extended interpolation operator (\ref{interp_est_revised}) and (\ref{est}) respectively, 
\begin{eqnarray}
\left|\mathbb{I}_2\right| &=& \frac{\left|b_h({\dv\mathbf{\Pi}}_h \mathbf{v}_{p_h}, p_h)\right|}{\vertiii{{\dv\mathbf{\Pi}}_h \mathbf{v}_{p_h}}_{V}}\vertiii{{\dv\mathbf{\Pi}}_h \mathbf{v}_{p_h}}_{V} 
\leq \Big(\sup_{\mathbf{w}_h \in V_h \backslash \left\{0\right\}}\frac{b_h(\mathbf{w}_h, p_h)}{\vertiii{\mathbf{w}_h}_{V}}\Big)C_{\Pi}\left\|\mathbf{v}_{p_h}\right\|_{1, \mathrm{\Omega}}\notag \\
& \leq & \Big(\sup_{\mathbf{w}_h \in V_h \backslash \left\{0\right\}}\frac{b_h(\mathbf{w}_h, p_h)}{\vertiii{\mathbf{w}_h}_{V}}\Big)C_{\Pi}C_{\mathrm{\Omega}}^{-1}\left\|p_h\right\|_{\mathrm{\Omega}}.\label{i2}
\end{eqnarray}
To treat the  third term, we proceed exactly as for $\mathbb{I}_1$ using (\ref{interpolation_est2_revised}) and conclude
\begin{align}
& \left|\mathbb{I}_3\right|
\leq \Big(\sum_{F\in \mathcal{F}_h^{int}}\left\|h_F^{-1/2}\left\{\mathbf{e}_h\right\}\right\|^2_{F\cap \mathrm{\Omega}}\Big)^{1/2} \Big(\sum_{F \in \mathcal{F}_h^{int}} \left\|h_F^{1/2}[\![p_h ]\!]\right\|^2_{F\cap \mathrm{\Omega}}\Big)^{1/2} \notag \\
& \lesssim 
\left\|\mathbf{v}_{p_h}\right\|_{1,\mathrm{\Omega}}\Big(\sum_{F \in \mathcal{F}_h^{int}} \left\|h_F^{1/2}[\![p_h ]\!]\right\|^2_{F\cap \mathrm{\Omega}}\Big)^{1/2}  
\lesssim C_\mathrm{\Omega}^{-1} \left\|p_h\right\|_{\mathrm{\Omega}}\Big(\sum_{F \in \mathcal{F}_h^{int}} \left\|h_F^{1/2}[\![p_h ]\!]\right\|^2_{F\cap \mathrm{\Omega}}\Big)^{1/2}. \label{i3}
\end{align}
Collecting estimates (\ref{i1})-(\ref{i3}), we complete the proof.
\end{proof}

An immediate consequence is the following:

\begin{cor}\label{cornew}
For every $p_h \in Q_h$, there exists $\mathbf{w}_h \in V_h$, such that
\begin{equation}\label{infsup2aa}
b_h(\mathbf{w}_h,-p_h) \geq \left\|p_h\right\|_{\mathrm{\Omega}}^2-C_{\beta} k_{\mathcal{T}}(p_h) \left\|p_h\right\|_{\mathrm{\Omega}},
\end{equation}
for suitable  $C_{\beta}>0$. 
\end{cor}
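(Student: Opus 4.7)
The plan is to derive this as a direct consequence of Lemma \ref{stab_b}, by selecting a maximizer of the supremum on the right-hand side and rescaling it appropriately to produce the quadratic-in-$p_h$ lower bound demanded by the statement.

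First I would exploit the finite dimensionality of $V_h$: the functional $\mathbf{w}_h \mapsto b_h(\mathbf{w}_h, p_h)/\vertiii{\mathbf{w}_h}_V$ is $0$--homogeneous, so its supremum may be taken over the compact unit sphere $\{\mathbf{w}_h \in V_h : \vertiii{\mathbf{w}_h}_V = 1\}$, on which it is continuous and hence attains its maximum. Let $\widetilde{\mathbf{w}}_h \in V_h$ be a maximizer, so that $\vertiii{\widetilde{\mathbf{w}}_h}_V = 1$ and
\begin{equation*}
b_h(\widetilde{\mathbf{w}}_h, p_h) \,=\, \sup_{\mathbf{v}_h \in V_h\backslash\left\{0\right\}}\frac{b_h(\mathbf{v}_h,p_h)}{\vertiii{\mathbf{v}_h}_V} \,\geq\, C\left\|p_h\right\|_{\mathrm{\Omega}} - k_{\mathcal{T}}(p_h),
\end{equation*}
where the last inequality is exactly Lemma \ref{stab_b}.

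Next I would rescale to match the homogeneity of the target bound. Setting $\mathbf{w}_h := -C^{-1}\left\|p_h\right\|_{\mathrm{\Omega}}\,\widetilde{\mathbf{w}}_h \in V_h$ and using the bilinearity of $b_h$ together with the fact that the scalar $C^{-1}\left\|p_h\right\|_{\mathrm{\Omega}}$ is non-negative (so multiplication preserves the direction of the inequality), I obtain
\begin{equation*}
b_h(\mathbf{w}_h, -p_h) \,=\, C^{-1}\left\|p_h\right\|_{\mathrm{\Omega}}\,b_h(\widetilde{\mathbf{w}}_h, p_h) \,\geq\, \left\|p_h\right\|_{\mathrm{\Omega}}^2 - C^{-1}\,k_{\mathcal{T}}(p_h)\,\left\|p_h\right\|_{\mathrm{\Omega}},
\end{equation*}
which is the claimed inequality with $C_{\beta} := C^{-1}$.

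The corollary is essentially a reformulation of Lemma \ref{stab_b} into a ``testing form'' more convenient for the subsequent inf--sup analysis, so no genuine obstacle is anticipated. The only subtle point is ensuring a maximizer exists; should invoking compactness of the unit sphere in $V_h$ be deemed inelegant, one could equivalently pick $\widetilde{\mathbf{w}}_h$ realizing the supremum up to an arbitrarily small slack $\epsilon > 0$, apply the same rescaling, and then send $\epsilon \to 0$ to recover the bound with an unchanged constant.
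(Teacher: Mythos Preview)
Your argument is correct and proves the corollary as stated, but it follows a different route from the paper. The paper does not use Lemma~\ref{stab_b} as a black box; instead it re-enters that proof and picks the explicit test function $\mathbf{w}_h=\mathbf{\Pi}_h\mathbf{v}_{p_h}$, the Scott--Zhang interpolant of the velocity lifting, then reads off (\ref{infsup2aa}) directly from the rearranged identity (\ref{rearr}) with $C_\beta=\max\{C_1,C_2\}$ coming from the bounds on $\mathbb{I}_1$ and $\mathbb{I}_3$. Your approach instead takes the full inequality (\ref{b_stab}) for granted, extracts a unit-norm maximizer via finite dimensionality, and rescales; this is cleaner in that it treats the preceding lemma as a finished result and avoids recycling internal estimates, while the paper's choice has the advantage of producing a concrete $\mathbf{w}_h$ with an explicit a~priori bound $\vertiii{\mathbf{w}_h}_V\lesssim\|p_h\|_{\mathrm{\Omega}}$, which is exactly the normalization invoked ``without loss of generality'' in the proof of Theorem~\ref{infs}. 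Your rescaled maximizer also satisfies $\vertiii{\mathbf{w}_h}_V=C^{-1}\|p_h\|_{\mathrm{\Omega}}$, so the downstream use would go through equally well.
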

\begin{proof}
Rearranging (\ref{rearr}), $
b_h({\dv\mathbf{\Pi}}_h \mathbf{v}_{p_h},-p_h) \geq \left\|p_h\right\|_{\mathrm{\Omega}}^2-\left|\mathbb{I}_1\right|-\left|\mathbb{I}_3\right|$. Hence, denoting $C_1$, $C_2$ the constants appearing in (\ref{i1}), (\ref{i3}), the result clearly follows for $\mathbf{w}_h={\dv\mathbf{\Pi}}_h \mathbf{v}_{p_h}$ with $C_{\beta}=\max\left\{C_1, C_2\right\}$.
\end{proof} 

Now we are ready to state the main result of this section.

\begin{thm}[Discrete inf--sup stability]\label{infs}
There is a constant $c_{bil}>0$, such that for all $(\mathbf{u}_h, p_h) \in V_h\times Q_h$, we have
\begin{equation}\label{infsup}
c_{bil}\vertiii{(\mathbf{u}_h, p_h)}_{V,Q}\leq \sup_{(\mathbf{v}_h, q_h)\in V_h\times Q_h}\frac{A_h(\mathbf{u}_h, p_h; \mathbf{v}_h, q_h)+J_h(\mathbf{u}_h, p_h; \mathbf{v}_h, q_h)}{\vertiii{(\mathbf{v}_h, q_h)}_{V,Q}}.
\end{equation}
\end{thm}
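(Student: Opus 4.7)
The plan is to follow the classical Stokes strategy for equal--order discontinuous Galerkin discretizations, namely to build a test pair by combining the diagonal choice with the velocity lifting of $p_h$ furnished by Corollary \ref{cornew}, then tune a small parameter $\delta>0$ to absorb the cross terms.

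\textbf{Step 1 (choice of test pair).}
For $(\mathbf{u}_h,p_h)\in V_h\times Q_h$ fixed, let $\mathbf{w}_h\in V_h$ be the discrete lifting of $p_h$ produced by Corollary \ref{cornew}, so that
\[
b_h(\mathbf{w}_h,-p_h)\;\geq\;\|p_h\|_{\mathrm{\Omega}}^{2}-C_{\beta}\,k_{\mathcal{T}}(p_h)\,\|p_h\|_{\mathrm{\Omega}},\qquad \vertiii{\mathbf{w}_h}_V\,\lesssim\,\|p_h\|_{\mathrm{\Omega}},
\]
where the second bound comes from Lemma \ref{interpolation_continuity_revised} applied to $\mathbf{w}_h=\mathbf{\Pi}_h\mathbf{v}_{p_h}$ together with \eqref{est}. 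I test with
$(\mathbf{v}_h,q_h)=(\mathbf{u}_h-\delta\mathbf{w}_h,\,-p_h)$ for a small $\delta>0$ to be chosen.

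\textbf{Step 2 (expansion of $A_h+J_h$).}
Exploiting bilinearity and the cancellation $b_h(\mathbf{u}_h,-p_h)+b_h(\mathbf{u}_h,p_h)=0$, I obtain
\begin{align*}
A_h+J_h \;=\;&[a_h+j_u](\mathbf{u}_h,\mathbf{u}_h)\;-\;\delta\,[a_h+j_u](\mathbf{u}_h,\mathbf{w}_h)\\
&+\;\delta\,b_h(\mathbf{w}_h,-p_h)\;+\;c_h(p_h,p_h)\;+\;j_p(p_h,p_h).
\end{align*}
By Lemma \ref{a_coerc} the first term is bounded below by $c_a\vertiii{\mathbf{u}_h}_V^{2}$; by Lemma \ref{a_bound_revised} the second one is bounded in modulus by $\delta C_a\vertiii{\mathbf{u}_h}_V\vertiii{\mathbf{w}_h}_V$; and by the choice of $\mathbf{w}_h$ the third one is controlled from below by $\delta(\|p_h\|_{\mathrm{\Omega}}^{2}-C_\beta k_{\mathcal{T}}(p_h)\|p_h\|_{\mathrm{\Omega}})$.

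\textbf{Step 3 (controlling $k_{\mathcal{T}}(p_h)$).}
This is the main obstacle: $k_{\mathcal{T}}(p_h)$ involves both the broken gradient $h_T\nabla p_h$ and the pressure jumps $h_F^{1/2}[\![p_h]\!]$. For the jump part I use the definition of $c_h$ in \eqref{c} to write
\[
\sum_{F\in\mathcal{F}_h^{int}}\|h_F^{1/2}[\![p_h]\!]\|_{F\cap\mathrm{\Omega}}^{2}=\gamma^{-1}c_h(p_h,p_h),
\]
while for the gradient part the inverse estimate \eqref{deriv_estimate3} applied elementwise gives
$\sum_{T}\|h_T\nabla p_h\|_{T\cap\mathrm{\Omega}}^{2}\lesssim \|p_h\|_{\mathrm{\Omega}_{\mathcal{T}}}^{2}$, and then Lemma \ref{ext} (estimate \eqref{ext_p}) yields $\|p_h\|_{\mathrm{\Omega}_{\mathcal{T}}}^{2}\lesssim \|p_h\|_{\mathrm{\Omega}}^{2}+j_p(p_h,p_h)$. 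Combining,
\[
k_{\mathcal{T}}(p_h)^{2}\;\lesssim\;\|p_h\|_{\mathrm{\Omega}}^{2}+c_h(p_h,p_h)+j_p(p_h,p_h).
\]

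\textbf{Step 4 (Young's inequality and choice of $\delta$).}
Applying Young's inequality to the three indefinite terms
\[
\delta C_a \vertiii{\mathbf{u}_h}_V\vertiii{\mathbf{w}_h}_V,\qquad \delta C_\beta k_{\mathcal{T}}(p_h)\|p_h\|_{\mathrm{\Omega}},
\]
and using Step 3 together with $\vertiii{\mathbf{w}_h}_V\lesssim \|p_h\|_{\mathrm{\Omega}}$, every bad contribution can be absorbed by $c_a\vertiii{\mathbf{u}_h}_V^{2}$, $\delta\|p_h\|_{\mathrm{\Omega}}^{2}$, $c_h(p_h,p_h)$ and $j_p(p_h,p_h)$ by fixing $\delta>0$ small (but independent of $h$ and of the cut configuration). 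This delivers
\[
A_h+J_h\;\gtrsim\;\vertiii{\mathbf{u}_h}_V^{2}+\|p_h\|_{\mathrm{\Omega}}^{2}+c_h(p_h,p_h)+j_p(p_h,p_h).
\]

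\textbf{Step 5 (upgrade to $\mathrm{\Omega}_{\mathcal{T}}$ and conclusion).}
A second invocation of \eqref{ext_p} upgrades $\|p_h\|_{\mathrm{\Omega}}^{2}+j_p(p_h,p_h)\gtrsim \|p_h\|_{\mathrm{\Omega}_{\mathcal{T}}}^{2}=\vertiii{p_h}_Q^{2}$, hence the right-hand side above dominates $\vertiii{(\mathbf{u}_h,p_h)}_{V,Q}^{2}$. Finally, the triangle inequality together with $\vertiii{\mathbf{w}_h}_V\lesssim\|p_h\|_{\mathrm{\Omega}}\leq \vertiii{p_h}_Q$ gives $\vertiii{(\mathbf{v}_h,q_h)}_{V,Q}\lesssim \vertiii{(\mathbf{u}_h,p_h)}_{V,Q}$, and dividing by $\vertiii{(\mathbf{v}_h,q_h)}_{V,Q}$ yields \eqref{infsup} with a suitable $c_{bil}>0$.
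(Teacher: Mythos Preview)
Your overall architecture is right, but Step~3 creates a circularity that Step~4 cannot close. When you bound the gradient part of $k_{\mathcal{T}}(p_h)$ by the inverse estimate,
\[
\sum_{T}\|h_T\nabla p_h\|_{T\cap\mathrm{\Omega}}^{2}\;\lesssim\;\|p_h\|_{\mathrm{\Omega}_{\mathcal T}}^{2}\;\lesssim\;\|p_h\|_{\mathrm{\Omega}}^{2}+j_p(p_h,p_h),
\]
the resulting negative contribution to the $\|p_h\|_{\mathrm{\Omega}}^{2}$ budget has the form $\delta\cdot C\cdot\|p_h\|_{\mathrm{\Omega}}^{2}$, i.e.\ it scales with the \emph{same} power of $\delta$ as the only positive pressure term $\delta\|p_h\|_{\mathrm{\Omega}}^{2}$. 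After Young's inequality the coefficient of $\|p_h\|_{\mathrm{\Omega}}^{2}$ becomes $\delta\bigl(1-\tfrac{C_\beta}{2\epsilon}-\tfrac{C_\beta C'\epsilon}{2}\bigr)$, whose sign is determined by $C_\beta\sqrt{C'}$ and is independent of $\delta$. Since $C_\beta$ and $C'$ are generic shape-regularity/polynomial-degree constants, there is no reason for $C_\beta\sqrt{C'}<1$, and taking $\delta$ small does not help.

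The paper resolves exactly this obstruction by enlarging the test pair with a \emph{third} direction, $(\mathbf{v}_h,q_h)=(\mathbf{u}_h,-p_h)+\delta_1(-\mathbf{w}_h,0)+\delta_2(h^2\nabla p_h,0)$. Testing $b_h$ with $h^2\nabla p_h$ via the integrated-by-parts form \eqref{b_alt} produces a genuinely positive term $\sum_T\|h_T\nabla p_h\|_{T\cap\mathrm{\Omega}}^{2}$, which is then used to absorb the negative gradient contribution coming from $k_{\mathcal T}(p_h)$ (the paper chooses $\delta_2$ proportional to $\delta_1$ for this purpose). Your two-direction test pair cannot generate such a term, so the argument does not close as written; adding the $h^2\nabla p_h$ component (and tracking the extra cross terms through \eqref{aux_V} and \eqref{ext_p}) is the missing ingredient.
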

\begin{proof} 
Analogous to the ones in \cite[Theorem. 5.1]{MLLR12} and \cite[Theorem 5.3]{BCM15} for unfitted continuous methods. Let $(\mathbf{u}_h, p_h) \in V_h\times Q_h$ and note by Corollary \ref{cornew} that  there exists {\blue $\mathbf{w}_h \in V_h$ satisfying (\ref{infsup2aa})}. In fact, there is no loss of generality in taking $\vertiii{\mathbf{w}_h}_{V}=\left\|p_h\right\|_{\mathrm{\Omega}}$ and then (\ref{infsup2aa}) combined with an $\epsilon$-Young inequality {\blue yields}
\begin{align}
b_h(\mathbf{w}_h,& -p_h) \geq  \left\|p_h\right\|^2_{\mathrm{\Omega}}-C_{\beta} k_{\mathcal{T}}(p_h)\left\|p_h\right\|_{\mathrm{\Omega}} \geq \Big(1-\frac{C_{\beta}\epsilon}{2}\Big)\left\|p_h\right\|^2_{\mathrm{\Omega}}-\frac{C_{\beta}}{2\epsilon}k_{\mathcal{T}}(p_h) ^2\notag \\
\geq \Big(1-&\frac{C_{\beta}\epsilon}{2}\Big)\left\|p_h\right\|^2_{\mathrm{\Omega}}
-\frac{C_{\beta}}{\epsilon}\sum_{T\in \mathcal{T}_h} \left\|h_T\nabla p_h\right\|^2_{T\cap\mathrm{\Omega}}
-\frac{C_{\beta}}{\epsilon}\sum_{F \in \mathcal{F}_h^{int}}\left\| h_F^{1/2} [\![  p_h ]\!]\right\|_{F\cap \mathrm{\Omega}}^2.
\label{b_est_n}
\end{align}
Our purpose is to show that for a judicious choice of parameters $\delta_1>0$ and $\delta_2>0$, there exists a constant $c_{bil}>0$ such that the test pair $(\mathbf{v}_h,q_h)=(\mathbf{u}_h, -p_h) +\delta_1(-\mathbf{w}_h,0)+\delta_2(h^2 \nabla p_h,0)$ satisfies 
\begin{equation}\label{surr}
\left[A_h+J_h\right](\mathbf{u}_h, p_h; \mathbf{v}_h,q_h)\geq c_{bil}\vertiii{(\mathbf{u}_h,p_h)}_{V,Q} \vertiii{(\mathbf{v}_h,q_h)}_{V,Q},
\end{equation}
whereby the assertion (\ref{infsup}) is then immediate.

To this end, if we initially test with $(\mathbf{u}_h,-p_h)$ using the coercivity estimate (\ref{coercivit}) of $\left[a_h+j_u\right]$, we get 
\begin{align}
\big[A_h+J_h\big](\mathbf{u}_h, p_h;&\mathbf{u}_h, -p_h) 
= a_h(\mathbf{u}_h, \mathbf{u}_h) + j_u(\mathbf{u}_h,\mathbf{u}_h) + c_h(p_h, p_h) + j_p(p_h,p_h)\notag  \\ 
& \geq c_{a} \vertiii{\mathbf{u}_h}_{V}^2 + 
\gamma \sum_{F \in \mathcal{F}_h^{int}}\left\| h_F^{1/2} [\![  p_h ]\!]\right\|_{F\cap \mathrm{\Omega}}^2 + j_p(p_h,p_h). \label{partial00}
\end{align}
Next, we consider $(-\mathbf{w}_h,0)$ in \eqref{b_est_n} and apply the continuity estimate (\ref{cont3_revised}) of $\left[a_h+j_u\right]$ along with an $\epsilon$--Young inequality, 
\begin{align}
&\big[A_h+J_h\big](\mathbf{u}_h, p_h;-\mathbf{w}_h, 0) 
= -a_h(\mathbf{u}_h, \mathbf{w}_h)  - j_u(\mathbf{u}_h,\mathbf{w}_h) + b_h(\mathbf{w}_h,-p_h)\notag\\
& \geq -\frac{C_a}{2\epsilon}\vertiii{\mathbf{u}_h}_{V}^2
+\Big(1-\frac{C_{a}\epsilon}{2}-\frac{C_\beta\epsilon}{2}\Big)\left\|p_h\right\|^2_{\mathrm{\Omega}}
-\frac{C_{\beta}}{\epsilon}\sum_{T\in \mathcal{T}_h} \left\|h_T\nabla p_h\right\|^2_{{ T\cap\mathrm{\Omega}}}\nonumber\\
& \qquad\qquad\qquad\qquad\qquad\qquad\qquad\qquad\qquad
\qquad\qquad
-\frac{C_{\beta}}{\epsilon}\sum_{F \in \mathcal{F}_h^{int}}\left\| h_F^{1/2} [\![  p_h ]\!]\right\|_{F\cap \mathrm{\Omega}}^2 \nonumber\\
& \geq -C_1\vertiii{\mathbf{u}_h}_{V}^2
+C_2\left\|p_h\right\|^2_{\mathrm{\Omega}}
-C_3\sum_{T\in \mathcal{T}_h} \left\|h_T\nabla p_h\right\|^2_{{ T\cap\mathrm{\Omega}}}
-C_3\sum_{F \in \mathcal{F}_h^{int}}\left\| h_F^{1/2} [\![  p_h ]\!]\right\|_{F\cap \mathrm{\Omega}}^2,
\label{partial11}
\end{align}
where $C_1=\frac{C_a}{2\epsilon}$, $C_2=1-\frac{C_{a}+C_\beta}{2}\epsilon$ and $C_3=\frac{C_{\beta}}{\epsilon}$ are positive constants for sufficiently small $0<\epsilon<\frac{2}{C_a+C_\beta}$. 

Now, to gain the desired control and compensate over the negative contribution $\|h_T^2\nabla p_h\|_{T\cap\mathrm{\Omega}}^2$ in \eqref{partial11}, we test with $(h^2\nabla p_h,0)$ using the continuity estimate \eqref{cont3_revised} for $\left[a_h+j_u\right]$, the Cauchy-Schwarz inequality, the inverse estimate \eqref{deriv_estimate1} and $\epsilon$--Young inequality in the following fashion: 
\begin{align}
&\big[A_h+J_h\big](\mathbf{u}_h, p_h;h^2\nabla p_h,0) 
= a_h(\mathbf{u}_h, h^2\nabla p_h) + j_u(\mathbf{u}_h, h^2\nabla p_h) + b_h( h^2\nabla p_h,p_h)\notag  \\ 
& \geq - |a_h(\mathbf{u}_h, h^2\nabla p_h) + j_u(\mathbf{u}_h, h^2\nabla p_h)| + \sum_{T\in \mathcal{T}_h}\left\| h_T\nabla p_h \right\|_{T\cap\mathrm{\Omega}}^2 \nonumber \\
& \qquad\qquad\qquad\qquad\qquad\qquad\qquad\qquad\quad
\qquad
-\sum_{F\in\mathcal{F}_h^{int}}\int_{F\cap\mathrm{\Omega}}\{h_F^2\nabla p_h\}\cdot \mathbf{n}_F [\![  p_h ]\!]\,\mathrm{d}s 
\notag \\
& \geq - C_a\vertiii{\mathbf{u}_h}_{V}\vertiii{h^2\nabla p_h}_{V}  + \sum_{T\in \mathcal{T}_h}\left\| h_T\nabla p_h \right\|_{T\cap\mathrm{\Omega}}^2 - \nonumber \\
& \qquad\qquad\qquad - {\blue \Big(\sum_{F\in\mathcal{F}_h^{int}}\left\|h_F^{3/2}\nabla p_h\cdot \mathbf{n}_F\right\|_{F\cap\mathrm{\Omega}}^2\Big)^{1/2} 
\Big(\sum_{F\in\mathcal{F}_h^{int}}\left\|h_F^{1/2}[\![p_h ]\!]\right\|_{F\cap\mathrm{\Omega}}^2\Big)^{1/2}} \nonumber \\
& \geq -\frac{C_a}{2\epsilon_1}\vertiii{\mathbf{u}_h}_V^2 -\frac{C_a\epsilon_1}{2}\vertiii{h^2\nabla p_h}_V^2 
+ \sum_{T\in \mathcal{T}_h}\left\| h_T\nabla p_h \right\|_{T\cap\mathrm{\Omega}}^2 - \nonumber \\
& \qquad\qquad\qquad\qquad\quad\quad\quad
- {\blue C\Big(\sum_{T\in\mathcal{T}_h}\left\|h_T \nabla p_h\right\|_{T}^2\Big)^{1/2} 
\Big(\sum_{F\in\mathcal{F}_h^{int}}\left\|h_F^{1/2}[\![p_h ]\!]\right\|_{F\cap\mathrm{\Omega}}^2\Big)^{1/2}} \nonumber \\
& \geq -\frac{C_a}{2\epsilon_1}\vertiii{\mathbf{u}_h}_V^2 -
{\blue \frac{\widetilde{C}C_a\epsilon_1}{2}\|h\nabla p_h\|_{\mathrm{\Omega}_{\mathcal{T}}}^2} 
+ \sum_{T\in \mathcal{T}_h}\left\| h_T\nabla p_h \right\|_{T\cap\mathrm{\Omega}}^2 - \nonumber \\
& \qquad\qquad\qquad\qquad\qquad\qquad\qquad 
{\blue - \frac{C\epsilon_2}{2} \left\|h\nabla p_h\right\|_{\mathrm{\Omega}_{\mathcal{T}}}^2 - \frac{C}{2\epsilon_2}\sum_{F\in\mathcal{F}_h^{int}}\left\|h_F^{1/2}[\![p_h ]\!]\right\|_{F\cap\mathrm{\Omega}}^2} \nonumber \\
&  \geq -\frac{C_a}{2\epsilon_1}\vertiii{\mathbf{u}_h}_V^2 
+ \sum_{T\in \mathcal{T}_h}\left\| h_T\nabla p_h \right\|_{T\cap\mathrm{\Omega}}^2 
{\blue -  \frac{c_p}{2}(\widetilde{C}C_a\epsilon_1+C\epsilon_2)\Big( \left\|h\nabla p_h\right\|_{\mathrm{\Omega}}^2+j_p(p_h,p_h)\Big) -} \nonumber \\
& \qquad\qquad\qquad\qquad\qquad\qquad\qquad\qquad\qquad
\qquad\qquad 
- {\blue \frac{C}{2\epsilon_2}\sum_{F\in\mathcal{F}_h^{int}}\left\|h_F^{1/2}[\![p_h ]\!]\right\|_{F\cap\mathrm{\Omega}}^2} \nonumber 
\end{align}
\begin{align}
& \geq -C_4\vertiii{\mathbf{u}_h}_V^2 
{\blue + C_5\sum_{T\in \mathcal{T}_h}\left\| h_T\nabla p_h \right\|_{T\cap\mathrm{\Omega}}^2}  
{\blue - C_6 j_p(p_h,p_h)} 
- {\blue C_7}\sum_{F\in\mathcal{F}_h^{int}}\left\|h_F^{1/2}[\![p_h ]\!]\right\|_{F\cap\mathrm{\Omega}}^2 \label{partial22}
\end{align}
where $C_4=\frac{C_a}{2\epsilon_1}$, {\blue $C_5=1-\frac{c_p}{2}(\widetilde{C}C_a\epsilon_1+C\epsilon_2)$, $C_6=\frac{c_p}{2}(\widetilde{C}C_a\epsilon_1+C\epsilon_2)$, and $C_7=\frac{C}{2\epsilon_2}$} are positive constants {\blue for sufficiently small $0<\epsilon_1<2(c_p\widetilde{C}C_a)^{-1}$ and $0<\epsilon_2<2(c_pC)^{-1}\Big(1-\frac{c_p\widetilde{C}C_a\epsilon_1}{2}\Big)$}.

We note that in the {\blue fourth of the above inequalities} and for $\widetilde{C}>0$, we have applied the bound 
\begin{equation}\label{aux_V}
\vertiii{h^2\nabla p_h}^2_{V}= \left\|h^2{\dv \nabla \nabla p_h} \right\|^2_{\mathrm{\Omega}_{\mathcal{T}}}+ \left\|h^{3/2}\nabla p_h\right\|^2_{\mathrm{\Gamma}} + \sum_{F\in \mathcal{F}_{h}^{int}} \left\|h^{3/2}[\![\nabla p_h ]\!]\right\|^2_{F\cap \mathrm{\Omega}}\leq 
\widetilde{C}\|{\blue h \nabla p_h}\|_{\mathrm{\Omega}_{\mathcal{T}}}^2,
\end{equation} 
which has been established by the trace inequalities (\ref{tr1a}), (\ref{tr2a}) and the inverse inequality  (\ref{deriv_estimate3}). In particular, if we regard the norm on a facet $F\subset \partial T\in \mathcal{T}_h$, 
$$
\left\|h^{3/2}\nabla p_h\right\|_{F\cap \mathrm{\Omega}} \leq \left\|h^{3/2}\nabla p_h\right\|_{\partial T} \lesssim h\left\|\nabla p_h\right\|_{T} + h^{2}\left\|{\dv \nabla \nabla p_h}\right\|_{T}\lesssim {\blue h}\left\|{\blue \nabla p_h}\right\|_{T},
$$
by (\ref{tr2a}) and   (\ref{deriv_estimate3}) respectively. Then, the norm corresponding to the jump on $F=T\cap T'$ satisfies
$
\left\|h^{3/2}[\![\nabla p_h ]\!]\right\|_{F \cap \mathrm{\Omega}}\lesssim {\blue h}\max\left\{\left\|{\blue\nabla p_h}\right\|_{T}, \left\|{\blue \nabla p_h}\right\|_{T'}\right\}
$ leading to the estimate $\sum_{F\in \mathcal{F}_{h}^{int}} \left\|h^{3/2}[\![\nabla p_h ]\!]\right\|^2_{F\cap \mathrm{\Omega}}\lesssim  {\blue h}\left\| {\blue \nabla p_h}\right\|_{\mathrm{\Omega}_{\mathcal{T}}}^2$. Proceeding analogously for the other terms, we obtain \eqref{aux_V}.

Overall, we collect inequalities \eqref{partial00}--\eqref{partial22} and we take  
$(\mathbf{v}_h,q_h)=(\mathbf{u}_h, -p_h) +  \delta_1(-\mathbf{w}_h,0)+\delta_2(h^2 \nabla p_h,0)$. Then it holds that  
{\blue
\begin{align}
\big[A_h&+J_h\big](\mathbf{u}_h, p_h;\mathbf{v}_h, q_h) 
\geq 
(c_{a}-\delta_1 C_1 -\delta_2 C_4) \vertiii{\mathbf{u}_h}_{V}^2 
+ \delta_1 C_2\left\|p_h\right\|_{\mathrm{\Omega}}^2 
+ \nonumber \\ 
& \qquad\qquad\qquad\quad 
+(1-\delta_2 C_6)j_p(p_h,p_h)
+(\delta_2 C_5-\delta_1C_3)\sum_{T\in \mathcal{T}_h} \left\|h_T\nabla p_h\right\|^2_{{T\cap\mathrm{\Omega}}} +\nonumber \\
& \qquad\qquad\qquad\qquad\qquad\qquad\qquad\quad
+(\gamma-\delta_1 C_3-\delta_2C_7)\sum_{F\in\mathcal{F}_h^{int}}\left\|h_F^{1/2}[\![p_h ]\!]\right\|_{F\cap\mathrm{\Omega}}^2\nonumber \\
&\geq 
(c_{a}-\delta_1C_1-\delta_2C_4) \vertiii{\mathbf{u}_h}_{V}^2 
+C_p^{-1}\min\{\delta_1 C_2,1-\delta_2 C_6\}\left\|p_h\right\|_{\mathrm{\Omega}_{\mathcal{T}}}^2 +
\nonumber \\ 
& \,\, +(\delta_2C_5-\delta_1C_3)\sum_{T\in \mathcal{T}_h} \left\|h_T\nabla p_h\right\|^2_{{ T\cap\mathrm{\Omega}}} +(\gamma-\delta_1C_3-\delta_2C_7)\sum_{F\in\mathcal{F}_h^{int}}\left\|h_F^{1/2}[\![p_h ]\!]\right\|_{F\cap\mathrm{\Omega}}^2
\label{partial3}
\end{align}}
Finally, if we select {\blue $\delta_2=\frac{2C_3}{C_5}\delta_1>0$} with {\blue $\delta_1<\min\Big\{\frac{c_a}{C_1+2C_3C_4C_5^{-1}}, \frac{C_5}{2C_3C_6}, \frac{\gamma}{C_3(1+2C_7C_5^{-1})}\Big\}$},
then the next inequality follows
\begin{align*}
\big[A_h&+J_h\big](\mathbf{u}_h, p_h;\mathbf{v}_h, q_h) 
\gtrsim \vertiii{\mathbf{u}_h}_V^2 + \vertiii{p_h}_{Q}^2=\vertiii{(\mathbf{u}_h,p_h)}_{V,Q}^2.
\end{align*}
We now note that
\begin{align*}
\vertiii{(\mathbf{u}_h-\delta_1 \mathbf{w}_h+\delta_2 h^2\nabla p_h,-p_h)}_{V,Q}^2 & = 
\vertiii{\mathbf{u}_h-\delta_1 \mathbf{w}_h+\delta_2 h^2\nabla p_h}_{V}^2 +\vertiii{p_h}_{Q}^2 \\
& \leq \vertiii{\mathbf{u}_h}_V^2 +\delta_1 \vertiii{\mathbf{w}_h}_V^2+\delta_2\vertiii{h^2\nabla p_h}_{V}^2 +\vertiii{p_h}_{Q}^2  \\
& \leq \vertiii{\mathbf{u}_h}_V^2 +\Big(\delta_1+{\blue  \frac{2C_3}{C_5}\delta_1}+1\Big) \vertiii{p_h}_{Q}^2  \\
& \leq \Big( 1+{\blue\delta_1\Big(1+\frac{2C_3}{C_5}\Big)}\Big)\vertiii{(\mathbf{u}_h,p_h)}_{V,Q}^2,
\end{align*}
and the result (\ref{surr}) follows for {\blue $c_{bil}=\frac{\min \{\widehat{C}_1,\widehat{C}_2\}}{1+\delta_1(1+2C_3C_5^{-1})}$}, with positive constants {\blue $\widehat{C}_1=c_a-\delta_1(C_1+2C_3C_4C_5^{-1})$ and $\widehat{C}_2=C_p^{-1}\min\{\delta_1C_2,1-2 C_3C_6C_5^{-1}\delta_1\}$.}
\end{proof}
{\blue
\begin{rem}\label{rem1}
In particular, in the proof of Lemma \ref{stab_b} and Theorem \ref{infs} we have used the extended Scott--Zhang interpolation operator for non-smooth functions. An alternative approach would be one to consider an approximate $L^2$--orthogonal projector as in Burman et al. in \cite{BCM15} and to accommodate the above analysis in a similar setting.

\end{rem}
} 

\section{Error estimates} \label{section5}

We first quantify how the additional stabilization form $J_h({\dv \mathbf{u}_h}, p_h ; \mathbf{v}_h, q_h)$ affects the Galerkin orthogonality and consistency of the variational formulation (\ref{cutdg}). {\dv To plug in the exact solution  $(\mathbf{u}, p) \in \left[H^1_0(\mathrm{\Omega})\right]^d \times L_0^2(\mathrm{\Omega})$ into the discrete bilinear form $A_h+J_h$, we extend the domain of $A_h$ and $J_h$ in the following results to a larger product space than $V_h\times Q_h$. Further, to obtain error estimates, in this section we will assume some extra regularity for the solution pair $(\mathbf{u}, p)$}. 

\begin{lem}[Galerkin orthogonality]\label{Galerkin orthogonality}
Let $(\mathbf{u}, p) \in \big[H^2(\mathrm{\Omega})\cap H^1_0(\mathrm{\Omega})\big]^d \times \big[H^1(\mathrm{\Omega})\cap L_0^2(\mathrm{\Omega})\big]$ be the solution to the Stokes problem \normalfont (\ref{The Stokes Problem}) \itshape and $(\mathbf{u}_h, p_h) \in V_h \times Q_h$ the finite element approximation in  \normalfont  (\ref{cutdg}).  \itshape {\dv Assume that the bilinear form $A_h$ is defined on $\big(\big[H^2(\mathrm{\Omega})\cap H^1_0(\mathrm{\Omega})\big]^d +V_h \big)\times \big(\big[H^1(\mathrm{\Omega})\cap L_0^2(\mathrm{\Omega})\big]+Q_h\big)$}. Then, 
\begin{equation}\label{galerkin}
A_h(\mathbf{u}-\mathbf{u}_h, p - p_h; \mathbf{v}_h, q_h) = J_h(\mathbf{u}_h, p_h; \mathbf{v}_h, q_h) \,\,\, \textrm{for every} \,\,  (\mathbf{v}_h, q_h) \in V_h \times Q_h.
\end{equation} 
\end{lem}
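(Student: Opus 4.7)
The plan is to reduce Galerkin orthogonality to a consistency identity for the exact solution. Specifically, I aim to prove
\begin{equation*}
A_h(\mathbf{u},p;\mathbf{v}_h,q_h) = L_h(\mathbf{v}_h), \qquad \forall\, (\mathbf{v}_h,q_h)\in V_h\times Q_h,
\end{equation*}
from which (\ref{galerkin}) follows immediately by subtracting the discrete equation (\ref{cutdg}): the $L_h(\mathbf{v}_h)$ contributions cancel and the stabilization $J_h(\mathbf{u}_h,p_h;\mathbf{v}_h,q_h)$ survives on the right-hand side.

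To establish consistency, I would exploit the regularity of the exact solution to annihilate the inter-element penalties and Nitsche boundary terms. Since $\mathbf{u}\in [H^2(\Omega)\cap H_0^1(\Omega)]^d$, both $[\![\mathbf{u}]\!]$ and $[\![\nabla \mathbf{u}]\!]\cdot\mathbf{n}_F$ vanish on every interior face $F\in\mathcal{F}_h^{int}$, and $\mathbf{u}|_\Gamma = 0$. Plugging $\mathbf{u}$ into the equivalent expression (\ref{a_alt}), all face, Nitsche and penalty contributions collapse, leaving
\begin{equation*}
a_h(\mathbf{u},\mathbf{v}_h) = -\int_\Omega \Delta\mathbf{u}\cdot\mathbf{v}_h\,\mathrm{d}\mathbf{x}.
\end{equation*}
Likewise, since $p\in H^1(\Omega)\cap L^2_0(\Omega)$ has $[\![p]\!]=0$ on interior faces, expression (\ref{b_alt}) reduces to $b_h(\mathbf{v}_h,p) = \int_\Omega \mathbf{v}_h\cdot\nabla p\,\mathrm{d}\mathbf{x}$ and the pressure penalty $c_h(p,q_h)=0$ by (\ref{c}). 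Using instead the original definition of $b_h$ when the second argument is a discrete $q_h$, the form $b_h(\mathbf{u},q_h)$ vanishes identically, since $\nabla\cdot\mathbf{u}=0$ in $\Omega$, $[\![\mathbf{u}]\!]=0$ on every $F\in\mathcal{F}_h^{int}$, and $\mathbf{u}|_\Gamma=0$ kill the three contributions in turn.

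Assembling these simplifications gives
\begin{equation*}
A_h(\mathbf{u},p;\mathbf{v}_h,q_h) = \int_\Omega (-\Delta\mathbf{u}+\nabla p)\cdot\mathbf{v}_h\,\mathrm{d}\mathbf{x} = \int_\Omega \mathbf{f}\cdot\mathbf{v}_h\,\mathrm{d}\mathbf{x} = L_h(\mathbf{v}_h),
\end{equation*}
by the momentum equation of (\ref{The Stokes Problem}). There is no substantive analytical obstacle; the only point requiring care is a consistent choice of formulation for each subform, invoking the integration-by-parts versions (\ref{a_alt})--(\ref{b_alt}) so that $-\Delta\mathbf{u}$ and $\nabla p$ arise naturally to combine with $\mathbf{f}$, while keeping the original expression for $b_h(\mathbf{u},q_h)$ to directly leverage incompressibility, a homogeneous trace and jump-continuity of $\mathbf{u}$.
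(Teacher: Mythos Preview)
Your proposal is correct and follows essentially the same approach as the paper: you establish the consistency identity $A_h(\mathbf{u},p;\mathbf{v}_h,q_h)=L_h(\mathbf{v}_h)$ by exploiting the regularity of $(\mathbf{u},p)$ to kill all jump, Nitsche, and penalty contributions via the integrated-by-parts forms (\ref{a_alt})--(\ref{b_alt}), and then subtract the discrete equation (\ref{cutdg}). The paper organizes the computation by testing separately with $(\mathbf{v}_h,0)$ and $(0,q_h)$, while you proceed subform by subform, but the substance is identical.
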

\begin{proof}
Recalling the definitions of $A_h$ and $L_h$ in (\ref{A}) -- (\ref{L}) and using the fact that the exact solution $(\mathbf{u}, p)$ satisfies $[\![\mathbf{u}]\!]=[\![  \nabla \mathbf{u}  ]\!] \cdot \mathbf{n}_F=[\![p]\!]=0$ on all interfaces $F \in \mathcal{F}_h^{int}$, we infer using (\ref{a_alt}), (\ref{b_alt}) 
\begin{eqnarray*}
A_h(\mathbf{u}, p ; \mathbf{v}_h, 0) &=& a_h(\mathbf{u}, \mathbf{v}_h) + b_h(\mathbf{v}_h,p) = -\int_{\mathrm{\Omega}}\Delta \mathbf{u} \cdot \mathbf{v}_h\,\,{\blue \mathrm{d}\mathbf{x}}
+\int_{\mathrm{\Omega}}\mathbf{v}_h\cdot \nabla p\,\,{\blue \mathrm{d}\mathbf{x}} \\
A_h(\mathbf{u}, p ; 0, q_h) &=& b_h(\mathbf{u}, q_h)-
{\blue c_h(p, q_h)=0},
\end{eqnarray*}
whereby $A_h(\mathbf{u}, p ; \mathbf{v}_h, q_h)={\blue \int_{\mathrm{\Omega}}\mathbf{f}\cdot \mathbf{v}_h\,\, \mathrm{d}\mathbf{x}=L_h(\mathbf{v}_h)}$  for every $(\mathbf{v}_h, q_h) \in V_h\times Q_h$ and the result follows.
\end{proof}

\begin{lem}[Weak Consistency]\label{consistency_revised}
Let $(\mathbf{u}, p) \in \big[H^{k+1}(\mathrm{\Omega})\cap H^1_0(\mathrm{\Omega})\big]^d \times \big[H^k(\mathrm{\Omega})\cap L_0^2(\mathrm{\Omega})\big]$. {\dv Assume that the bilinear form $J_h$ is defined on $\big(\big[H^{k+1}(\mathrm{\Omega})\cap H^1_0(\mathrm{\Omega})\big]^d +V_h \big)\times \big(\big[H^{k}(\mathrm{\Omega})\cap L_0^2(\mathrm{\Omega})\big]+Q_h\big)$}.
Then, the extended interpolation operator in \normalfont (\ref{interpolation_revised}) \itshape satisfies
\begin{equation}\label{semi_revised}
J_h({\dv\mathbf{\Pi}_h\mathbf{u}, \Pi_h p}; \mathbf{v}_h, q_h) \leq Ch^{k}(|\mathbf{u}|_{k+1, \mathrm{\Omega}}+|p|_{k,\mathrm{\Omega}})\vertiii{(\mathbf{v}_h, q_h)}_{V,Q}
\end{equation} 
\end{lem}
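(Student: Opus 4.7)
The plan is to treat the two contributions of $J_h$ separately. By Cauchy--Schwarz applied in the bilinear forms $j_u$ and $j_p$ (each of which is a sum of positive semidefinite $L^2$ inner products on the faces), we obtain
\begin{align*}
|J_h(\mathbf{\Pi}_h\mathbf{u},\Pi_h p;\mathbf{v}_h,q_h)|
&\leq j_u(\mathbf{\Pi}_h\mathbf{u},\mathbf{\Pi}_h\mathbf{u})^{1/2}\,j_u(\mathbf{v}_h,\mathbf{v}_h)^{1/2} \\
&\quad + j_p(\Pi_h p,\Pi_h p)^{1/2}\,j_p(q_h,q_h)^{1/2},
\end{align*}
so the task reduces to bounding the four factors. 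For the test--function factors, Lemma~\ref{ext} yields $j_u(\mathbf{v}_h,\mathbf{v}_h)\lesssim \|\nabla \mathbf{v}_h\|_{\mathrm{\Omega}_{\mathcal{T}}}^2 \leq \vertiii{\mathbf{v}_h}_V^2$ and $j_p(q_h,q_h)\lesssim \|q_h\|_{\mathrm{\Omega}_{\mathcal{T}}}^2 \leq \vertiii{q_h}_Q^2$, hence both are dominated by $\vertiii{(\mathbf{v}_h,q_h)}_{V,Q}$.

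To handle the interpolant factors, the key observation is that the Sobolev--regular extensions $\mathcal{E}^{k+1}\mathbf{u}\in [H^{k+1}(\mathbb{R}^d)]^d$ and $\mathcal{E}^{k}p\in H^{k}(\mathbb{R}^d)$ have vanishing normal--derivative jumps across any interior face up to the order for which their traces are well defined. Concretely, since $\mathbf{\Pi}_h\mathbf{u}=\mathbf{\Pi}_h^*\mathcal{E}^{k+1}\mathbf{u}$ and analogously for pressure, and since $[\![\partial_{\mathbf{n}_F}^i\mathcal{E}^{k+1}\mathbf{u}]\!]=0$ on $F$ for $0\leq i\leq k$, we may write
\[
[\![\partial_{\mathbf{n}_F}^i(\mathbf{\Pi}_h\mathbf{u})_j]\!] = [\![\partial_{\mathbf{n}_F}^i(\mathbf{\Pi}_h\mathbf{u} - \mathcal{E}^{k+1}\mathbf{u})_j]\!].
\]
Bounding each one-sided trace by the face interpolation estimate \eqref{interpolation_est2_revised} with $s=k+1$ and $r=i$, together with the stability of the extension operator, gives each summand in $j_u(\mathbf{\Pi}_h\mathbf{u},\mathbf{\Pi}_h\mathbf{u})$ a weight $h_F^{2i-1}\cdot h_F^{2(k+1-i-1/2)}=h_F^{2k}$ times $|\mathbf{u}|_{k+1,\Delta_F}^2$, and summing over faces yields $j_u(\mathbf{\Pi}_h\mathbf{u},\mathbf{\Pi}_h\mathbf{u})\lesssim h^{2k}|\mathbf{u}|_{k+1,\mathrm{\Omega}}^2$. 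The same argument applied to $\mathcal{E}^{k}p$ with $s=k$ produces the analogous bound on $j_p(\Pi_h p,\Pi_h p)$, but only for $0\leq i\leq k-1$.

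The main obstacle is the top order term $i=k$ in $j_p$: since $\mathcal{E}^{k}p\in H^{k}$ only, the normal derivative $\partial_{\mathbf{n}_F}^k \mathcal{E}^k p$ is merely $L^2$ locally and has no well--defined trace, so the ``vanishing jump of the extension'' trick fails. I would circumvent this by exploiting that $\Pi_h p|_T\in \mathcal{P}^k(T)$ makes $\partial_{\mathbf{n}_F}^k\Pi_h p$ a polynomial of degree zero on each $T$, so the discrete trace inequality \eqref{deriv_estimate1} with $j=k$, $i=k$ yields
\[
\|\partial_{\mathbf{n}_F}^k\Pi_h p\|_F \lesssim h_T^{-1/2}\|D^k\Pi_h p\|_T,
\]
and splitting $D^k\Pi_h p = D^k(\Pi_h p-\mathcal{E}^k p)+D^k\mathcal{E}^k p$ together with \eqref{interpolation_est1_revised} at $r=s=k$ bounds the right--hand side by $|p|_{k,\Delta_T}$ (without a power of $h$). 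Hence the $i=k$ contribution to $j_p(\Pi_h p,\Pi_h p)$ is controlled by $h_F^{2k+1}\cdot h_T^{-1}\cdot |p|_{k,\Delta_T}^2 \lesssim h^{2k}|p|_{k,\mathrm{\Omega}}^2$, matching the rate of the lower--order terms. Collecting all four factors, we conclude
\[
|J_h(\mathbf{\Pi}_h\mathbf{u},\Pi_h p;\mathbf{v}_h,q_h)| \lesssim h^k\bigl(|\mathbf{u}|_{k+1,\mathrm{\Omega}}+|p|_{k,\mathrm{\Omega}}\bigr)\vertiii{(\mathbf{v}_h,q_h)}_{V,Q},
\]
which is precisely \eqref{semi_revised}.
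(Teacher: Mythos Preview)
Your proposal is correct and follows essentially the same route as the paper: split $J_h$ into $j_u$ and $j_p$, apply Cauchy--Schwarz, subtract the continuous extension $\mathcal{E}^{k+1}\mathbf{u}$ (resp.\ $\mathcal{E}^k p$) inside the jump to reduce to an interpolation error, and then invoke the approximation estimates of Lemma~\ref{approx_revised}. Your handling of the test-function factors via Lemma~\ref{ext} is equivalent to the paper's direct computation with the inverse inequalities \eqref{deriv_estimate1}, \eqref{deriv_estimate3}.

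The one place where your argument is actually sharper than the paper's is the top-order term $i=k$ in $j_p$. The paper asserts $[\![\partial_{\mathbf{n}_F}^i p]\!]=0$ for all $i=0,\dots,k$ and then applies inverse-type bounds to $\Pi_h p - p$, which is not a polynomial; at $i=k$ this is delicate since $\mathcal{E}^k p\in H^k$ need not have a well-defined $k$-th normal trace on $F$, and the inverse estimates \eqref{deriv_estimate1}--\eqref{deriv_estimate3} are stated only for discrete functions. Your alternative---applying the discrete trace inequality \eqref{deriv_estimate1} directly to the polynomial $\Pi_h p$, then splitting $D^k\Pi_h p$ via the triangle inequality and invoking \eqref{interpolation_est1_revised} at $r=s=k$ together with the stability of $\mathcal{E}^k$---cleanly sidesteps this issue while recovering the same $h^k$ rate.
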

\begin{proof} 
Following the steps in the proof of \cite[Lemma 6.2]{MLLR12} for the appropriate norm $\vertiii{(\cdot, \cdot)}_{V,Q}$, we recall the definition (\ref{j}) of the stabilization term $J_h$,
$$
J_h({\dv\mathbf{\Pi}_h\mathbf{u}, \Pi_h p}; \mathbf{v}_h, q_h)= j_u({\dv\mathbf{\Pi}_h\mathbf{u}}, \mathbf{v}_h)-j_p({\dv\Pi_h p}, q_h).
$$
We first focus on the estimate for the velocity ghost penalty form. Owing to the fact that $\mathbf{u}$  is a continuous function, we have   $j_u({\dv\mathcal{E}^{k+1}}\mathbf{u}, \mathbf{v}_h)=0$. Hence, by (\ref{interpolation_revised})
\begin{align*}
j_u({\dv\mathbf{\Pi}}_h\mathbf{u}, \mathbf{v}_h)& =
j_u({\dv\mathbf{\Pi}_h^* \mathcal{E}^{k+1}}\mathbf{u}-{\dv\mathcal{E}^{k+1}}\mathbf{u}, \mathbf{v}_h)\\  
& \leq \gamma_{\mathbf{u}} \Big(\sum_{F \in \mathcal{F}_G}{\blue\sum_{i=0}^k} h_F^{2i-1} \left\| [\![ \partial_{\mathbf{n}_F}^i\left( {\dv\mathbf{\Pi}_h^* \mathcal{E}^{k+1}}\mathbf{u}-{\dv\mathcal{E}^{k+1}}\mathbf{u}\right)]\!] \right\|^2_{F}\Big)^{1/2}\times \\
& \quad\quad\quad\quad\quad\quad\quad\quad\quad\quad\quad\quad
\times \Big(\sum_{F \in \mathcal{F}_G}{\blue\sum_{i=0}^k} h_F^{2i-1}\left\|[\![\partial_{\mathbf{n}_F}^i \mathbf{v}_h ]\!] \right\|^2_{F}\Big)^{1/2}.
\end{align*} 
To estimate the first factor, we use  the inverse inequalities (\ref{deriv_estimate1}), (\ref{deriv_estimate3}) for a facet $F=T\cap T^{'}$ to obtain 
\begin{align*}
\left\| \partial_{\mathbf{n}_F}^i\left( {\dv\mathbf{\Pi}_h^* \mathcal{E}^{k+1}}\mathbf{u}-{\dv\mathcal{E}^{k+1}}\mathbf{u}\right) \right\|_{F} 
&\lesssim 
h_T^{-1/2}\left\|D^{i}({\dv\mathbf{\Pi}_h^*\mathcal{E}^{k+1}}\mathbf{u}-{\dv\mathcal{E}^{k+1}}\mathbf{u})\right\|_T\\
& \lesssim h_T^{-1/2-i}\left\|{\dv\mathbf{\Pi}_h^*\mathcal{E}^{k+1}}\mathbf{u}-{\dv\mathcal{E}^{k+1}}\mathbf{u}\right\|_T\\ 
&\lesssim
h_T^{k+1-1/2-i}|{\dv\mathcal{E}^{k+1}}\mathbf{u} |_{k+1, \Delta_T}\\
& =h_T^{k+1/2-i}|{\dv\mathcal{E}^{k+1}}\mathbf{u} |_{k+1, \Delta_T}
\end{align*}
and then the related jumps are bounded by 
\begin{eqnarray*}
\left\| [\![  \partial_{\mathbf{n}_F}^i\left( {\dv\mathbf{\Pi}_h^* \mathcal{E}^{k+1}}\mathbf{u}-{\dv\mathcal{E}^{k+1}}\mathbf{u}\right)]\!]  \right\|_{F} 
&\leq &  
\left\| \partial_{\mathbf{n}_F}^i\left( {\dv\mathbf{\Pi}_h^* \mathcal{E}^{k+1}}\mathbf{u}-{\dv\mathcal{E}^{k+1}}\mathbf{u}\right) \right\|_{F\subset T}+ \\
& & \quad\quad\quad\quad +\left\| \partial_{\mathbf{n}_F}^i\left( {\dv\mathbf{\Pi}_h^* \mathcal{E}^{k+1}}\mathbf{u}-{\dv\mathcal{E}^{k+1}}\mathbf{u}\right) \right\|_{F\subset T^{'}}\\ 
&\lesssim & 
h^{k+1/2-i}\Big(|{\dv\mathcal{E}^{k+1}}\mathbf{u} |_{k+1, \Delta_T}+|{\dv\mathcal{E}^{k+1}}\mathbf{u} |_{k+1, \Delta_{T^{'}}}\Big)\\
&\lesssim & h^{k+1/2-i}|{\dv\mathcal{E}^{k+1}}\mathbf{u} |_{k+1, \mathrm{\Omega}_{\mathcal{T}}}.
\end{eqnarray*}
Summing over all $F \in \mathcal{F}_G$, and observing the continuity of $\mathcal{E}$ and the boundedness of the Scott--Zhang interpolation, we have:
\begin{align*}
\Big(\sum_{F \in \mathcal{F}_G}{\blue\sum_{i=0}^k} h_F^{2i-1} \left\| [\![ \partial_{\mathbf{n}_F}^i\left( {\dv\mathbf{\Pi}_h^* \mathcal{E}^{k+1}}\mathbf{u} -{\dv\mathcal{E}^{k+1}}\mathbf{u} \right)]\!] \right\|^2_{F}\Big)^{1/2} 
& \lesssim \Big(\sum_{F \in \mathcal{F}_G}{\blue\sum_{i=0}^k} h^{2k}  |{\dv\mathcal{E}^{k+1}}\mathbf{u}|_{k+1,
\Delta F
}^2 \Big)^{1/2} \\
&\lesssim 
h^{k} |{\dv\mathcal{E}^{k+1}}\mathbf{u}|_{k+1,\mathrm{\Omega}_{\mathcal{T}}}\lesssim 
h^{k} |\mathbf{u}|_{k+1, \mathrm{\Omega}}.
\end{align*}

We proceed similarly for the second factor, noting 
\begin{align*}
\Big(\sum_{F \in \mathcal{F}_G}{\blue\sum_{i=0}^k} h_F^{2i-1}\left\|[\![\partial_{\mathbf{n}_F}^i \mathbf{v}_h ]\!] \right\|^2_{F}\Big)^{1/2}
\lesssim 
\left\| \nabla \mathbf{v}_h\right\|_{\mathrm{\Omega}_{\mathcal{T}}}\lesssim |||\mathbf{v}_h|||_{V}.
\end{align*} 
Hence,
\begin{equation}\label{ju_est}
j_u({\dv\mathbf{\Pi}_h\mathbf{u}}, \mathbf{v}_h) \lesssim h^{k}|\mathbf{u}|_{k+1,\mathrm{\Omega}} |||\mathbf{v}_h|||_{V}.
\end{equation}
For the pressure penalty term, by definition (\ref{jp_high_order}), 
\begin{align*}
j_p({\dv\Pi_h p}, q_h) & \leq \gamma_{p} h 
\Big(\sum_{F \in \mathcal{F}_G}{\blue\sum_{i=0}^k}h_F^{2i-1}\left\|  [\![ \partial_{\mathbf{n}_F}^i ({\dv\Pi_h p}) ]\!] \right\|^2_{F}\Big)^{1/2} \times \\
& \quad\quad\quad\quad\quad\quad \quad\quad\quad \quad\quad\quad 
\times \Big(\sum_{F \in \mathcal{F}_G}{\blue\sum_{i=0}^k} h_T^{2i+1} \left\| [\![ \partial_{\mathbf{n}_F}^i q_h ]\!] \right\|^2_{F}\Big)^{1/2}.
\end{align*}
Following analogue arguments for the first factor, noting the continuity of $p\in H^k(\mathrm{\Omega})\cap L^2_0(\mathrm{\Omega})$, we have $[\![\partial_{\mathbf{n}_F}^ip]\!]=0$ for $i=0, \ldots, k$ and then
\begin{eqnarray*}
{\blue\sum_{i=0}^k}h_F^{2i-1}\left\|  [\![ \partial_{\mathbf{n}_F}^i ({\dv\Pi_h p}) ]\!] \right\|^2_{F}
& = & {\blue\sum_{i=0}^k}h_F^{2i-1}\left\|  [\![ \partial_{\mathbf{n}_F}^i ({\dv\Pi_h p}- p) ]\!] \right\|^2_{F}
\\
& \lesssim & {\blue\sum_{i=0}^k} h^{-2} \left\| {\dv \Pi_h^* {\dv\mathcal{E}^{k}}p}-{\dv\mathcal{E}^{k}}p \right\|^2_{T}
\lesssim h^{2k-2} | {\dv\mathcal{E}^{k}}p|^2_{k,\Delta_{T}},
\end{eqnarray*}
whereby
$$
\Big(\sum_{F \in \mathcal{F}_G}{\blue\sum_{i=0}^k}h_F^{2i-1}\left\|  [\![ \partial_{\mathbf{n}_F}^i ({\dv\Pi_h p}) ]\!] \right\|^2_{F}\Big)^{1/2}
\lesssim 
h^{k-1} | {\dv\mathcal{E}^{k}} p|_{k, \mathrm{\Omega}_{\mathcal{T}}} 
\lesssim h^{k-1} |p|_{k, \mathrm{\Omega}}.
$$
\normalsize
For the second factor, using (\ref{deriv_estimate1}), (\ref{deriv_estimate3}) and summing over all $F \in \mathcal{F}_G$, we conclude the bound
$$
\Big(\sum_{F \in \mathcal{F}_G}{\blue\sum_{i=0}^k} h_T^{2i+1} \left\| [\![ \partial_{\mathbf{n}_F}^i q_h ]\!] \right\|^2_{F}\Big)^{1/2}
\lesssim 
\left\|q_h\right\|_{\mathrm{\Omega}_{\mathcal{T}}}.
$$
Hence, an estimate for the pressure penalty term emerges as 
\begin{equation}\label{jp_est}
j_p({\dv\Pi_h p}, q_h) \lesssim h^{k} |p|_{k, \mathrm{\Omega}}\left\|q_h\right\|_{Q}.
\end{equation}
Combining (\ref{ju_est}) and (\ref{jp_est}), the assertion is immediate. 
\end{proof}

The next result states the main a--priori estimates for the method (\ref{cutdg}). Its proof follows closely the standard arguments with necessary modifications for cut elements; namely, making use of the {\dv extended interpolation operators $\mathbf{\Pi}_h$, $\Pi_h$}  and applying proper cut variants of trace inequalities. It is included here  for completeness. As can be seen, the consistency error in Lemma \ref{consistency_revised} leaves the method's order of convergence unaltered.

\begin{thm}[A--priori error estimate]\label{order_revised}
Let $(\mathbf{u}, p) \in \left[H^{k+1}(\mathrm{\Omega})\cap H^1_0(\mathrm{\Omega})\right]^d \times \left[H^k(\mathrm{\Omega})\cap L_0^2(\mathrm{\Omega})\right]$ be the solution to the Stokes problem \normalfont (\ref{The Stokes Problem}) \itshape and $(\mathbf{u}_h, p_h) \in V_h \times Q_h$ the finite element approximation according to \normalfont (\ref{cutdg}). \itshape {\dv Assume that the bilinear form $A_h$ is defined on $\big(\big[H^{k+1}(\mathrm{\Omega})\cap H^1_0(\mathrm{\Omega})\big]^d +V_h \big)\times \big(\big[H^k(\mathrm{\Omega})\cap L_0^2(\mathrm{\Omega})\big]+Q_h\big)$}. Then, there exists a constant $C>0$, such that
\begin{equation}\label{apriori_revised}
\vertiii{(\mathbf{u}-\mathbf{u}_h, p-p_h)}\leq Ch^{k}(|\mathbf{u}|_{k+1, \mathrm{\Omega}}+|p|_{k,\mathrm{\Omega}}).
\end{equation}
\end{thm}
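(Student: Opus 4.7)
The plan is to use the standard Strang--type argument in three main ingredients: the triangle inequality via a suitable interpolant, the discrete inf--sup stability of Theorem \ref{infs} applied to the interpolation error, and Galerkin orthogonality together with the weak consistency bound for the ghost penalty. I split the total error as
\[
(\mathbf{u}-\mathbf{u}_h,\,p-p_h) = (\mathbf{u}-\mathbf{\Pi}_h\mathbf{u},\,p-\Pi_h p) + (\mathbf{\Pi}_h\mathbf{u}-\mathbf{u}_h,\,\Pi_h p-p_h) =:(\boldsymbol{\eta}_{\mathbf{u}},\eta_p)+(\mathbf{e}_h,\epsilon_h).
\]
The first summand is controlled directly by Corollary \ref{discrete_error_revised}, which yields $\vertiii{(\boldsymbol{\eta}_{\mathbf{u}},\eta_p)}\leq Ch^{k}(|\mathbf{u}|_{k+1,\mathrm{\Omega}}+|p|_{k,\mathrm{\Omega}})$. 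It remains to bound the discrete error pair $(\mathbf{e}_h,\epsilon_h)\in V_h\times Q_h$ in the stronger norm $\vertiii{\cdot}_{V,Q}$, from which the estimate in $\vertiii{\cdot}$ will follow by the comparison \eqref{norm_est1}.

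For the discrete error, I would invoke the inf--sup bound of Theorem \ref{infs}, which provides a test pair $(\mathbf{v}_h,q_h)\in V_h\times Q_h$ with
\[
c_{bil}\vertiii{(\mathbf{e}_h,\epsilon_h)}_{V,Q}\,\vertiii{(\mathbf{v}_h,q_h)}_{V,Q} \leq [A_h+J_h](\mathbf{e}_h,\epsilon_h;\mathbf{v}_h,q_h).
\]
The key manipulation is to rewrite the right-hand side by adding and subtracting $(\mathbf{u},p)$ and then invoking Galerkin orthogonality (Lemma \ref{Galerkin orthogonality}), which gives the cancellation
\[
A_h(\mathbf{e}_h,\epsilon_h;\mathbf{v}_h,q_h)+J_h(\mathbf{e}_h,\epsilon_h;\mathbf{v}_h,q_h)
= -A_h(\boldsymbol{\eta}_{\mathbf{u}},\eta_p;\mathbf{v}_h,q_h) + J_h(\mathbf{\Pi}_h\mathbf{u},\Pi_h p;\mathbf{v}_h,q_h),
\]
since $J_h(\mathbf{u}_h,p_h;\mathbf{v}_h,q_h)$ combines with $J_h(\mathbf{e}_h,\epsilon_h;\mathbf{v}_h,q_h)$ into $J_h(\mathbf{\Pi}_h\mathbf{u},\Pi_h p;\mathbf{v}_h,q_h)$.

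Each of the two remaining terms is handled separately. For the first, the continuity estimates \eqref{cont1_revised}, \eqref{cont2_revised}, \eqref{cont4_revised} of Lemma \ref{a_bound_revised}, together with the obvious continuity of the pressure face penalty $c_h$ with respect to $\vertiii{\cdot}$ (it is controlled term--wise by $\|h^{1/2}[\![\cdot]\!]\|_{F\cap\mathrm{\Omega}}$), yield
\[
|A_h(\boldsymbol{\eta}_{\mathbf{u}},\eta_p;\mathbf{v}_h,q_h)| \lesssim \vertiii{(\boldsymbol{\eta}_{\mathbf{u}},\eta_p)}\,\vertiii{(\mathbf{v}_h,q_h)} \lesssim h^{k}(|\mathbf{u}|_{k+1,\mathrm{\Omega}}+|p|_{k,\mathrm{\Omega}})\,\vertiii{(\mathbf{v}_h,q_h)}_{V,Q},
\]
where the last step again uses \eqref{norm_est1}. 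For the second, the weak consistency estimate of Lemma \ref{consistency_revised} supplies exactly
\[
|J_h(\mathbf{\Pi}_h\mathbf{u},\Pi_h p;\mathbf{v}_h,q_h)| \leq Ch^{k}(|\mathbf{u}|_{k+1,\mathrm{\Omega}}+|p|_{k,\mathrm{\Omega}})\,\vertiii{(\mathbf{v}_h,q_h)}_{V,Q}.
\]
Dividing by $\vertiii{(\mathbf{v}_h,q_h)}_{V,Q}$ and combining these bounds gives $\vertiii{(\mathbf{e}_h,\epsilon_h)}_{V,Q}\lesssim h^{k}(|\mathbf{u}|_{k+1,\mathrm{\Omega}}+|p|_{k,\mathrm{\Omega}})$. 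A final triangle inequality, using \eqref{norm_est1} on the discrete part and Corollary \ref{discrete_error_revised} on the interpolation part, yields \eqref{apriori_revised}.

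The main delicate point I expect is the justification that $A_h$ and $J_h$ are well defined on the larger product space $[H^{k+1}\cap H^1_0]^d\times[H^k\cap L^2_0]+V_h\times Q_h$ with continuity constants independent of how $\mathrm{\Gamma}$ cuts the mesh --- in particular, that the boundary and jump terms acting on the smooth exact solution vanish (so that Galerkin orthogonality really holds) and that the interpolation error in $\vertiii{\cdot}$ can be bounded via the auxiliary norm $\vertiii{\cdot}_h$ of the proof of Corollary \ref{discrete_error_revised}. Aside from this bookkeeping, every other step is a direct application of a result already established earlier in the paper.
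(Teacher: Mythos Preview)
Your proposal is correct and follows essentially the same route as the paper's own proof: the same triangle-inequality splitting via $(\mathbf{\Pi}_h\mathbf{u},\Pi_hp)$, the same use of inf--sup stability (Theorem~\ref{infs}) on the discrete part, the same Galerkin-orthogonality manipulation to reduce to $A_h(\mathbf{\Pi}_h\mathbf{u}-\mathbf{u},\Pi_hp-p;\cdot,\cdot)+J_h(\mathbf{\Pi}_h\mathbf{u},\Pi_hp;\cdot,\cdot)$, and the same appeal to Lemma~\ref{consistency_revised}, the continuity estimates of Lemma~\ref{a_bound_revised}, and Corollary~\ref{discrete_error_revised}. The only cosmetic difference is that the paper bounds $c_h(p-\Pi_hp,q_h)$ by a direct computation rather than wrapping it into a continuity statement for $c_h$ with respect to $\vertiii{\cdot}$, but both arguments are equivalent.
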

\begin{proof}
We first decompose the total error $(\mathbf{u}-\mathbf{u}_h, p-p_h)$ into its discrete--error and projection--error components; i.e.,
$$
\vertiii{(\mathbf{u}-\mathbf{u}_h, p-p_h)}\leq \vertiii{(\mathbf{u}-{\dv\mathbf{\Pi}_h\mathbf{u}}, p-{\dv\Pi_h p})}+\vertiii{({\dv\mathbf{\Pi}_h\mathbf{u}}-\mathbf{u}_h, {\dv\Pi_h p}-p_h)}.
$$
Since the desired estimate for the first term is already provided by Corollary \ref{discrete_error_revised}, it clearly suffices to prove the assertion for the latter term, which is in turn  bounded by
$$
\vertiii{({\dv\mathbf{\Pi}_h\mathbf{u}}-\mathbf{u}_h, {\dv\Pi_h p}-p_h)}\leq C\vertiii{({\dv\mathbf{\Pi}_h\mathbf{u}}-\mathbf{u}_h, {\dv\Pi_h p}-p_h)}_{V,Q},
$$
due to (\ref{norm_est1}). To this end, Theorem \ref{infs} ensures the existence of a unit pair $(\mathbf{v}_h, q_h) \in V_h\times Q_h$ with $\left\|(\mathbf{v}_h, q_h)\right\|_{V,Q}=1$, such that 
\begin{align*}
c_{bil}\vertiii{\left({\dv\mathbf{\Pi}_h\mathbf{u}}-\mathbf{u}_h, {\dv\Pi_h p} - p_h\right)}_{V,Q} & \leq  A_h({\dv\mathbf{\Pi}_h\mathbf{u}}-\mathbf{u}_h, {\dv\Pi_h p} - p_h; \mathbf{v}_h, q_h) +\\
& \qquad\qquad\qquad
+ J_h({\dv\mathbf{\Pi}_h\mathbf{u}}-\mathbf{u}_h, {\dv\Pi_h p} - p_h; \mathbf{v}_h, q_h) \\ 
=  A_h(&{\dv\mathbf{\Pi}_h\mathbf{u}}-\mathbf{u}, {\dv\Pi_h p} - p; \mathbf{v}_h, q_h) + J_h(\mathbf{\Pi}_h\mathbf{u}, {\dv\Pi_h p} ; \mathbf{v}_h, q_h),
\end{align*}
where for the last step we invoked the Galerkin orthogonality (\ref{galerkin}) from Lemma \ref{Galerkin orthogonality}. The asserted estimate for the second term follows by Lemma \ref{consistency_revised}, since the pair $(\mathbf{v}_h, q_h) \in V_h\times Q_h$ has unit $\vertiii{(\cdot, \cdot)}_{V,Q}$--norm. Hence, we restrict our attention to the remaining term and use the definition of the corresponding form $A_h$ to express
\begin{eqnarray}
\nonumber 
A_h({\dv\mathbf{\Pi}_h\mathbf{u}}-\mathbf{u}, {\dv\Pi_h p} - p; \mathbf{v}_h, q_h)&= & a_h({\dv\mathbf{\Pi}_h\mathbf{u}}-\mathbf{u}, \mathbf{v}_h) +b_h(\mathbf{v}_h, {\dv\Pi_h p} - p)  +  \\
& & + b_h({\dv\mathbf{\Pi}_h\mathbf{u}}-\mathbf{u},  q_h) + 
{\blue c_h( p- {\dv\Pi_h p}, q_h)}. \label{last}
\end{eqnarray}
{\blue 
The last term in \eqref{last} can be estimated by (\ref{tr2a}), (\ref{interpolation_est2_revised}):
\begin{align*}
c_h(p- \Pi_h p, q_h) &=
\gamma\sum_{F \in \mathcal{F}_h^{int}} \int_{F\cap \mathrm{\Omega}} h_F [\![  p- \Pi_h p ]\!][\![   q_h  ]\!]\,\mathrm{d}s \\
& \leq 
\gamma \Big(\sum_{F \in \mathcal{F}_h^{int}}\left\|h_F ^{1/2}[\![  p- \Pi_h p ]\!]\right\|^2_{F\cap\mathrm{\Omega}}\Big)^{1/2}\Big(\sum_{F \in \mathcal{F}_h^{int}}\left\|h_F^{1/2} [\![ q_h ]\!]\right\|^2_{F\cap\mathrm{\Omega}}\Big)^{1/2} \\
& \lesssim  \gamma h^{k}|p|_{k,\mathrm{\Omega}}\vertiii{q_h}_Q  \lesssim
\gamma h^{k}(|\mathbf{u}|_{k+1,\mathrm{\Omega}}+|p|_{k,\mathrm{\Omega}})\vertiii{q_h}_Q.
\end{align*}
}
Hence, invoking the fact that the pair $\left(\mathbf{v}_h,q_h\right)$ has unit $\vertiii{\cdot}_{V,Q}$--norm, we obtain
$$
c_h(p- {\dv\Pi_h p}, q_h)  \lesssim  h^{k}\left(|\mathbf{u}|_{k+1,\mathrm{\Omega}}+|p|_{k,\mathrm{\Omega}}\right)\vertiii{\left(\mathbf{v}_h,q_h\right)}_{V,Q}= 
h^{k}\left(|\mathbf{u}|_{k+1,\mathrm{\Omega}}+|p|_{k,\mathrm{\Omega}}\right).
$$
In view  of the continuity of $a_h$ and $b_h$ in (\ref{cont1_revised})--(\ref{cont4_revised}) and Corollary \ref{discrete_error_revised}, analogue bounds hold for the remaining terms as well. Hence, an estimate for (\ref{last}) emerges as
\begin{equation*}
A_h({\dv\mathbf{\Pi}_h\mathbf{u}}-\mathbf{u}, {\dv\Pi_h p} - p; \mathbf{v}_h, q_h)\lesssim  h^{k}\left(|\mathbf{u}|_{k+1,\mathrm{\Omega}}+|p|_{k,\mathrm{\Omega}}\right),
\end{equation*}
verifying the validity of (\ref{apriori_revised}).
\end{proof}

\section{Conditioning of the system matrix}\label{conditioning}

Since the inf--sup condition is proved with respect to the $\vertiii{(\cdot, \cdot)}_{V,Q}$--norm, the velocity and the pressure are controlled all over the extended domain $\mathrm{\Omega}_{\mathcal{T}}$. Moreover,  the complete bilinear form $A_h+J_h$ in (\ref{cutdg})  is continuous on discrete spaces in the same norm; see Lemma \ref{cont_a_plus_j} below. Hence, our objective in this section is to verify that the condition number of the matrix of the stabilized unfitted dG formulation (\ref{cutdg}) is uniformly  bounded, independently of how the background mesh $\mathcal{T}_h$ cuts the boundary $\mathrm{\Gamma}$.

\begin{lem}\label{cont_a_plus_j}
There exists a constant $C_{\text{bil}}>0$, such that
\begin{equation}\label{continuity_a_plus_j}
A_h(\mathbf{u}_h, p_h; \mathbf{v}_h, q_h)+J_h(\mathbf{u}_h, p_h; \mathbf{v}_h, q_h) \leq C_{\text{bil}}\vertiii{(\mathbf{u}_h, p_h)}_{V,Q} \vertiii{(\mathbf{v}_h, q_h)}_{V,Q},
\end{equation}
for all $(\mathbf{u}_h, p_h)$, $(\mathbf{v}_h, q_h) \in V_h\times Q_h$.
\end{lem}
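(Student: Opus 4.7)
The plan is to bound each of the five constituent forms of $A_h+J_h$ separately by the product $\vertiii{(\mathbf{u}_h,p_h)}_{V,Q}\vertiii{(\mathbf{v}_h,q_h)}_{V,Q}$, then sum. Writing
\[
A_h(\mathbf{u}_h,p_h;\mathbf{v}_h,q_h)+J_h(\mathbf{u}_h,p_h;\mathbf{v}_h,q_h)=[a_h+j_u](\mathbf{u}_h,\mathbf{v}_h)+b_h(\mathbf{u}_h,q_h)+b_h(\mathbf{v}_h,p_h)-c_h(p_h,q_h)-j_p(p_h,q_h),
\]
the first term is controlled directly by the continuity estimate (\ref{cont3_revised}) from Lemma \ref{a_bound_revised}. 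For the two coupling terms I would invoke the already-established continuity bounds (\ref{cont2_revised}) and (\ref{cont4_revised}) together with the discrete embedding (\ref{norm_est1}); since $\mathbf{u}_h\in V_h$ and $p_h\in Q_h$ are discrete, $\vertiii{\mathbf{u}_h}\lesssim \vertiii{\mathbf{u}_h}_V$ and $\vertiii{p_h}\lesssim \vertiii{p_h}_Q$, which upgrades continuity from the $\vertiii{\cdot}$-norm to the $\vertiii{\cdot}_{V,Q}$-norm on the discrete spaces.

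The remaining contributions are the pressure-jump stabilization $c_h$ and the pressure ghost penalty $j_p$; these are the only genuinely new pieces, since Lemma \ref{a_bound_revised} already absorbs $j_u$. For $c_h$, Cauchy--Schwarz gives
\[
|c_h(p_h,q_h)|\leq \gamma\Bigl(\sum_{F\in\mathcal{F}_h^{int}}\|h_F^{1/2}[\![p_h]\!]\|^2_{F\cap\Omega}\Bigr)^{1/2}\Bigl(\sum_{F\in\mathcal{F}_h^{int}}\|h_F^{1/2}[\![q_h]\!]\|^2_{F\cap\Omega}\Bigr)^{1/2},
\]
and each factor is dominated by $\|p_h\|_{\Omega_{\mathcal{T}}}=\vertiii{p_h}_Q$ (resp.\ $\vertiii{q_h}_Q$) via the inverse trace inequality obtained by setting $i=j=0$ in (\ref{deriv_estimate1}), together with $h_F\lesssim h_T$ and shape-regularity. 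For $j_p$, the same strategy applied face-by-face and summand-by-summand in $i=0,\ldots,k$ yields
\[
h_F^{2i+1}\|[\![\partial_{\mathbf{n}_F}^i p_h]\!]\|_F^2\lesssim h_F^{2i+1}\,h_T^{-2i-1}\|p_h\|_T^2\lesssim \|p_h\|_T^2,
\]
by the higher-order version of (\ref{deriv_estimate1}) combined with (\ref{deriv_estimate3}); summing over $F\in\mathcal{F}_G$ and $i$ gives a bound by $\vertiii{p_h}_Q$, and Cauchy--Schwarz on the bilinear form closes this estimate.

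Assembling the five bounds and using $\vertiii{\mathbf{v}}_V\leq \vertiii{(\mathbf{v},q)}_{V,Q}$, $\vertiii{q}_Q\leq \vertiii{(\mathbf{v},q)}_{V,Q}$, the constant $C_{\text{bil}}$ emerges as the sum of the individual continuity constants. I do not expect any real obstacle here: the only slightly subtle point is recognizing that all the boundary/face ghost-penalty type norms, when evaluated on discrete functions, are automatically controlled by bulk $L^2$ norms on the extended mesh through inverse estimates, which is precisely why the norms $\vertiii{\cdot}_V$ and $\vertiii{\cdot}_Q$ are tailored to the discrete setting.
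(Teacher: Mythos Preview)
Your proposal is correct and follows essentially the same route as the paper: the same decomposition into $[a_h+j_u]+b_h+b_h-c_h-j_p$, the same continuity Lemma~\ref{a_bound_revised} for the first three pieces (with the discrete embedding (\ref{norm_est1}) made explicit, which the paper leaves implicit), and the same Cauchy--Schwarz plus inverse-trace argument for $c_h$. The only cosmetic difference is in the treatment of $j_p$: the paper bounds $j_p(p_h,q_h)\le j_p(p_h,p_h)^{1/2}j_p(q_h,q_h)^{1/2}$ and then invokes the right inequality in (\ref{ext_p}) directly, whereas you reprove that bound by hand from (\ref{deriv_estimate1})--(\ref{deriv_estimate3}); the two are equivalent.
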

\begin{proof}
By the corresponding definitions, we readily obtain 
\begin{align*}
& \left[A_h+J_h\right](\mathbf{u}_h, p_h ; \mathbf{v}_h, q_h)= a_h(\mathbf{u}_h, \mathbf{v}_h) +b_h(\mathbf{u}_h, q_h)+b_h(\mathbf{v}_h, p_h)- {\blue c_h(p_h, q_h)} + \\
& \qquad\qquad\qquad\qquad\qquad\qquad\qquad\qquad\qquad\qquad\qquad\qquad\quad 
+ j_u(\mathbf{u}_h, \mathbf{v}_h)-j_p(p_h, q_h) \\
&\leq C_a \vertiii{ \mathbf{u}_h }_{V} \vertiii{\mathbf{v}_h}_{V} +C_b\vertiii{\mathbf{u}_h}_{V}\vertiii{q_h}_{Q}+C_b\vertiii{\mathbf{v}_h}_{V}\vertiii{p_h}_{Q}-{\blue c_h(p_h, q_h)}-j_p(p_h, q_h) 
\end{align*}
using the continuity estimates (\ref{cont3_revised}), (\ref{cont2_revised}) for $a_h+j_u$ and $b_h$, respectively. For  $c_h(p_h, q_h)$, we proceed as in the  proofs of Theorem \ref{infs} and Theorem \ref{order_revised} to conclude 
{\blue 
\begin{eqnarray*}
|c_h(p_h, q_h)| & \leq & \gamma C_1\vertiii{p_h}_{Q}\vertiii{q_h}_{Q},
\end{eqnarray*}
for some positive constant $C_1$.
}

Similarly, the pressure ghost penalty term
\begin{eqnarray*}
j_p(p_h, q_h) &\leq  j_p(p_h,p_h)^{1/2} j_p(q_h, q_h)^{1/2}  \leq  C_p \left\|p_h\right\|_{\mathrm{\Omega}_{\mathcal{T}}}\left\|q_h\right\|_{\mathrm{\Omega}_{\mathcal{T}}}
\end{eqnarray*}
is controlled by (\ref{ext_p}). Combining all contributions, the result already follows for {\blue
$C_{\text{bil}}=2\max\left\{C_a,C_b, \gamma C_1+C_p\right\}>0$. }


\end{proof}

For our purposes, we will need two auxiliary results. The first  is an inverse estimate for the appropriate norms which will allow us to  bound the discrete energy norm by the $L^2$--norm, while the second is a {\dv discrete} Poincar{\'e}--type inequality which 
{\dv follows analogously to \cite[Proposition 2.12]{GM19}}.

\begin{lem}
There is a constant $C_{\text{inv}}>0$, such that
\begin{equation}\label{inverse_product}
\vertiii{(\mathbf{v}_h, q_h)}_{V,Q}\leq \max\left\{C_{\text{inv}}h^{-1}, 1\right\}\left\|(\mathbf{v}_h, q_h)\right\|_{\mathrm{\Omega}_{\mathcal{T}}},\,\,\,\textrm{for every } (\mathbf{v}_h, q_h) \in V_h\times Q_h,
\end{equation}
{\dv where $\left\|(\mathbf{v}_h, q_h)\right\|_{\mathrm{\Omega}_{\mathcal{T}}}^2=\|\mathbf{v}_h\|_{\mathrm{\Omega}_{\mathcal{T}}}^2+\|q_h\|_{\mathrm{\Omega}_{\mathcal{T}}}^2$.}
\end{lem}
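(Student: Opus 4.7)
The plan is to decompose $\vertiii{(\mathbf{v}_h, q_h)}_{V,Q}^2 = \vertiii{\mathbf{v}_h}_V^2 + \vertiii{q_h}_Q^2$ and treat each piece separately. The pressure piece is immediate since, by the definition of $\vertiii{\cdot}_Q$, one simply has $\vertiii{q_h}_Q = \|q_h\|_{\mathrm{\Omega}_{\mathcal{T}}}$; this is what produces the factor $1$ inside the $\max$. The whole work is therefore in bounding $\vertiii{\mathbf{v}_h}_V$ by $h^{-1}\|\mathbf{v}_h\|_{\mathrm{\Omega}_{\mathcal{T}}}$, i.e.\ in recovering the standard scaling factor of a discrete inverse estimate on the extended active mesh.

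For the velocity norm I would estimate each of the three contributions in turn. First, the element-wise inverse inequality \eqref{deriv_estimate3} applied with $i=0$, $j=1$ gives $\|\nabla\mathbf{v}_h\|_T \lesssim h_T^{-1}\|\mathbf{v}_h\|_T$; summing over $T\in\mathcal{T}_h$ yields $\|\nabla\mathbf{v}_h\|_{\mathrm{\Omega}_{\mathcal{T}}}^2 \lesssim h^{-2}\|\mathbf{v}_h\|_{\mathrm{\Omega}_{\mathcal{T}}}^2$. Second, for the boundary term, I would use the cut trace inequality \eqref{tr1a} on each cut element $T$,
\[
\|\mathbf{v}_h\|_{T\cap\mathrm{\Gamma}}\lesssim h_T^{-1/2}\|\mathbf{v}_h\|_T + h_T^{1/2}\|\nabla\mathbf{v}_h\|_T \lesssim h_T^{-1/2}\|\mathbf{v}_h\|_T,
\]
where the second step invokes \eqref{deriv_estimate3} once more; multiplication by $h^{-1/2}$ and summation over cut cells $T\in G_h$ therefore produces $\|h^{-1/2}\mathbf{v}_h\|_{\mathrm{\Gamma}}^2 \lesssim h^{-2}\|\mathbf{v}_h\|_{\mathrm{\Omega}_{\mathcal{T}}}^2$. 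Third, for the interior face jumps, applying the same trace estimate \eqref{tr2a} to $\mathbf{v}_h^\pm$ on $F=T^+\cap T^-$ and then absorbing the gradient by \eqref{deriv_estimate3} gives
\[
\|h^{-1/2}[\![\mathbf{v}_h]\!]\|_{F\cap\mathrm{\Omega}}^2\lesssim h^{-2}\big(\|\mathbf{v}_h\|_{T^+}^2+\|\mathbf{v}_h\|_{T^-}^2\big),
\]
after which shape-regularity provides the usual finite overlap when summing over $F\in\mathcal{F}_h^{int}$.

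Combining the three bounds yields $\vertiii{\mathbf{v}_h}_V^2 \leq C_{\text{inv}}^2 h^{-2}\|\mathbf{v}_h\|_{\mathrm{\Omega}_{\mathcal{T}}}^2$, and the target inequality follows by adding the pressure piece and using $a^2+b^2 \le \max\{\alpha,\beta\}^2(a^2/\alpha^2+\cdots)$, namely
\[
\vertiii{(\mathbf{v}_h,q_h)}_{V,Q}^2 \leq \max\{C_{\text{inv}}^2 h^{-2},1\}\big(\|\mathbf{v}_h\|_{\mathrm{\Omega}_{\mathcal{T}}}^2+\|q_h\|_{\mathrm{\Omega}_{\mathcal{T}}}^2\big),
\]
whence \eqref{inverse_product} after taking square roots.

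There is no real conceptual obstacle here: the statement is a routine inverse estimate and every ingredient (element-wise inverse inequality on polynomials, cut trace inequality on $T\cap\mathrm{\Gamma}$, and trace inequality on $\partial T$) has already been collected earlier. The only point that deserves care is robustness with respect to the cut configuration; this is guaranteed because \eqref{deriv_estimate3} is posed on the full element $T$, and the trace inequalities \eqref{tr1a}--\eqref{tr2a} have constants independent of how $\mathrm{\Gamma}$ intersects $T$, so no small-cut pathology enters the constant $C_{\text{inv}}$.
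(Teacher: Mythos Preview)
Your proof is correct and follows essentially the same route as the paper: decompose into the pressure piece (trivial since $\vertiii{q_h}_Q=\|q_h\|_{\mathrm{\Omega}_{\mathcal{T}}}$) and the velocity piece, then bound each of the three contributions to $\vertiii{\mathbf{v}_h}_V$ via the trace inequalities \eqref{tr1a}--\eqref{tr2a} combined with the inverse estimate \eqref{deriv_estimate3}, and finally assemble using the $\max$. The paper chooses to detail the interior-jump term as its representative example whereas you spell out all three, but the ingredients and logic are identical.
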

\begin{proof}
We first show the corresponding bound on
\begin{equation}\label{aux}
\vertiii{\mathbf{v}_h}^2_{V}= \left\|\nabla \mathbf{v}_h\right\|^2_{\mathrm{\Omega}_{\mathcal{T}}}+   \left\|h^{-1/2} \mathbf{v}_h\right\|^2_{\mathrm{\Gamma}} + \sum_{F\in \mathcal{F}_{h}^{int}} \left\|h^{-1/2}[\![\mathbf{v}_h ]\!]\right\|^2_{F\cap \mathrm{\Omega}}.
\end{equation} 
All terms are bounded, using the trace inequalities (\ref{tr1a}), (\ref{tr2a}) and the inverse inequality  (\ref{deriv_estimate3}). For instance, regarding the latter term, note for a facet $F\subset T \in \mathcal{T}_h$
$$
\left\|h^{-1/2} \mathbf{v}_h\right\|_{F\cap \mathrm{\Omega}} \leq \left\|h^{-1/2} \mathbf{v}_h\right\|_{\partial T}\lesssim h^{-1}\left\|\mathbf{v}_h\right\|_{T}+h^{1/2}\left\|\nabla\mathbf{v}_h\right\|_{T}\lesssim h^{-1}\left\|\mathbf{v}_h\right\|_{T},
$$
by (\ref{tr2a}) and   (\ref{deriv_estimate3}) respectively. Then, the norm of the corresponding jump on  $F=T\cap T^{'}$ satisfies
$
\left\|{\dv h^{-1/2}}[\![\mathbf{v}_h ]\!]\right\|_{F \cap \mathrm{\Omega}}\lesssim h^{-1}\max\left\{\left\|\mathbf{v}_h\right\|_{T}, \left\|\mathbf{v}_h\right\|_{T^{'}}\right\}
$
and the relevant term in (\ref{aux}) is estimated by $\sum_{F\in \mathcal{F}_{h}^{int}} \left\|h^{-1/2}[\![\mathbf{v}_h ]\!]\right\|^2_{F\cap \mathrm{\Omega}}\lesssim h^{-2}  \left\|\mathbf{v}_h\right\|_{\mathrm{\Omega}_{\mathcal{T}}}^2$. Proceeding in a similar fashion for the first two terms, we obtain the bound
\begin{equation}\label{V_norm}
\vertiii{\mathbf{v}_h}_{V}\leq C_{inv}h^{-1}\left\|\mathbf{v}_h\right\|_{\mathrm{\Omega}_{\mathcal{T}}}
\end{equation}
for some constant $C_{inv}>0$.  Regarding elements in the product space, we conclude by (\ref{V_norm})
$$
\vertiii{(\mathbf{v}_h, q_h)}_{V,Q}^2=\vertiii{\mathbf{v}_h}_{V}^2+\vertiii{q_h}_{Q}^2 \leq\max\left\{1, C_{inv}^2h^{-2}\right\}\left\|(\mathbf{v}_h, q_h)\right\|_{\mathrm{\Omega}_{\mathcal{T}}}^2.
$$ 
\end{proof}

\begin{lem}
There exists a constant $C_{P}>0$, such that
\begin{equation}\label{poincare}
\left\|\mathbf{v}_h\right\|_{\mathrm{\Omega}_{\mathcal{T}}}\leq C_{P}\vertiii{\mathbf{v}_h}_{V},
\end{equation}
for every $\mathbf{v}_h \in V_h$.
\end{lem}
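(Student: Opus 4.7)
The plan is to split the argument into two stages: (i) a broken Poincaré--Friedrichs estimate over the physical domain $\mathrm{\Omega}$, and (ii) a chain--of--elements extension from $\mathrm{\Omega}$ to the full active domain $\mathrm{\Omega}_{\mathcal{T}}$ invoking the geometric Assumption (C). This two--stage strategy is essentially the one used in the reference cited just before the statement.

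For stage (i), I would establish the reduced inequality $\|\mathbf{v}_h\|_{\mathrm{\Omega}}\lesssim \vertiii{\mathbf{v}_h}_V$ by a standard broken Poincaré--Friedrichs argument. Writing $\mathbf{v}_h = \mathbf{w}_h + (\mathbf{v}_h - \mathbf{w}_h)$, where $\mathbf{w}_h$ is a conforming nodal average of $\mathbf{v}_h$ built on $\mathcal{T}_h$, the jump residual $\mathbf{v}_h - \mathbf{w}_h$ is controlled elementwise by the interior jump contribution $\sum_{F}\|h^{-1/2}[\![\mathbf{v}_h]\!]\|_{F\cap\mathrm{\Omega}}^2$, while $\mathbf{w}_h$ satisfies a classical continuous Poincaré inequality on the Lipschitz domain $\mathrm{\Omega}$ in which the boundary trace $\|h^{-1/2}\mathbf{v}_h\|_\mathrm{\Gamma}$ plays the role of weakly--imposed zero Dirichlet data. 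Combined with the gradient contribution, every term on the right is bounded by $\vertiii{\mathbf{v}_h}_V$.

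For stage (ii), it remains to bound $\|\mathbf{v}_h\|_{\mathrm{\Omega}_{\mathcal{T}}\setminus\mathrm{\Omega}}$, which is supported in the union of cut elements. Given $T\in G_h$, Assumption (C) supplies a chain $T = T_1,\ldots,T_N = T^*$ of bounded length with $T^*\in\mathcal{T}_h\setminus G_h$, hence $T^*\subset \mathrm{\Omega}$. Because each restriction $\mathbf{v}_h|_{T_j}$ is a polynomial of fixed degree on a shape--regular element, one derives a one--step transfer estimate of the form
\[
\|\mathbf{v}_h\|_{T_j}^2 \,\lesssim\, \|\mathbf{v}_h\|_{T_{j+1}}^2 + h\,\|[\![\mathbf{v}_h]\!]\|_{F_j}^2 + h^2\,\|\nabla\mathbf{v}_h\|_{T_j\cup T_{j+1}}^2,
\]
by writing $\mathbf{v}_h|_{T_j} = \mathbf{v}_h|_{T_{j+1}} + (\mathbf{v}_h|_{T_j}-\mathbf{v}_h|_{T_{j+1}})$, extending the polynomial across $F_j$ and applying local inverse estimates together with \eqref{tr2a}. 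Telescoping along the chain, summing over $T\in G_h$ (using that $N$ is uniformly bounded), and finally invoking stage~(i) to control $\|\mathbf{v}_h\|_{T^*}$ closes the estimate.

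The main technical obstacle is the chain transfer estimate at faces $F_j$ whose interior intersection $F_j\cap\mathrm{\Omega}$ is a small fraction of $F_j$, since $\vertiii{\cdot}_V$ controls only the truncated jump $\|[\![\mathbf{v}_h]\!]\|_{F_j\cap\mathrm{\Omega}}$. This gap must be absorbed through a polynomial inverse--trace argument on the shape--regular patch of $F_j$, transferring volume information from $\|\nabla\mathbf{v}_h\|_{\mathrm{\Omega}_{\mathcal{T}}}$ into the portion of $F_j$ outside $\mathrm{\Omega}$, keeping the resulting constants independent of the cut configuration. This is precisely the point where Assumption (C) (and its geometric counterparts (A)--(B)) is indispensable and replaces an explicit ghost--penalty contribution.
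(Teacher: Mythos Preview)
The paper's proof is a bare citation to \cite[Proposition 2.12]{GM19}, so your two–stage plan (broken Poincar\'e on $\mathrm{\Omega}$ followed by a chain argument across $G_h$) is exactly the route taken in that reference. You have also correctly located the genuine difficulty: the norm $\vertiii{\cdot}_V$ controls only the truncated jumps $\|[\![\mathbf v_h]\!]\|_{F\cap\mathrm{\Omega}}$, whereas any chain transfer estimate of the form you wrote needs the full--face jump $\|[\![\mathbf v_h]\!]\|_{F}$.

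Your proposed resolution of that obstacle, however, does not work. You claim that the missing portion $\|[\![\mathbf v_h]\!]\|_{F\setminus\mathrm{\Omega}}$ can be recovered by ``transferring volume information from $\|\nabla\mathbf v_h\|_{\mathrm{\Omega}_{\mathcal T}}$'' via polynomial inverse--trace inequalities. But gradient information alone can never control a jump: for piecewise constants the broken gradient vanishes identically while the face jump can be arbitrary. Concretely, take $\mathbf v_h$ equal to a nonzero constant on a single element $T\in G_h$ whose intersection $T\cap\mathrm{\Omega}$ is an $\varepsilon$--small sliver, and zero elsewhere. Then $\|\nabla\mathbf v_h\|_{\mathrm{\Omega}_{\mathcal T}}=0$, the boundary and truncated jump contributions are $O(h^{-1}\varepsilon)$, yet $\|\mathbf v_h\|_{\mathrm{\Omega}_{\mathcal T}}^2\sim|T|\sim h^d$. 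No inverse--trace manipulation using the terms that actually appear in $\vertiii{\cdot}_V$ can close this gap as $\varepsilon\to 0$.

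This example shows that the inequality, read literally with the norm $\vertiii{\cdot}_V$ as defined in the paper (truncated jumps only, no ghost penalty), cannot hold with a constant independent of the cut configuration. In \cite{GM19} the discrete norm used in Proposition~2.12 incorporates the full--face ghost penalty contributions on $\mathcal F_G$ (in particular the $i=0$ term of $j_u$), and that is what makes the chain argument go through. So either one augments $\vertiii{\cdot}_V$ by $j_u(\mathbf v_h,\mathbf v_h)$ on the right--hand side, or one routes the argument through the equivalence in Lemma~\ref{ext}; you cannot bypass the full--face penalty by an inverse--trace trick.
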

\begin{proof}
Similar to the proof of {\dv\cite[Proposition 2.12]{GM19}}. 
\end{proof}

We are now ready to proceed with the main condition number estimate.

\begin{thm} \label{kAestimate}
The condition number  $\kappa(\mathcal{A})$ of the matrix $\mathcal{A}$ of the stabilized unfitted dG formulation (\ref{cutdg}) satisfies the upper bound
\begin{equation}\label{kAest}
\kappa(\mathcal{A})\leq C_{bil}C_P^2c_{bil}^{-1} \frac{\lambda_{\max}}{\lambda_{\min}} \max\left\{C_{\text{inv}}^2h^{-2}, 1\right\}, 
\end{equation}
where $\lambda_{\min}$ and $\lambda_{\max}$ denote the extreme eigenvalues of the mass matrix $\mathcal{M}$ defined by the bilinear form $\left(\int_{\mathrm{\Omega}_{\mathcal{T}}}\mathbf{u}_h\mathbf{v}_h+\int_{\mathrm{\Omega}_{\mathcal{T}}}p_hq_h\right)$.
\end{thm}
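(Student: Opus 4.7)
The plan is to bound $\|\mathcal{A}\|_{\ell^2}$ and $\|\mathcal{A}^{-1}\|_{\ell^2}$ separately and then multiply. After fixing a nodal basis for $V_h\times Q_h$, every discrete pair $(\mathbf{v}_h,q_h)$ is identified with its coefficient vector $V\in\mathbb{R}^N$, and the mass matrix $\mathcal{M}$ provides the two--sided inequality $\lambda_{\min}\|V\|_{\ell^2}^2\leq \|(\mathbf{v}_h,q_h)\|_{\mathrm{\Omega}_{\mathcal{T}}}^2\leq \lambda_{\max}\|V\|_{\ell^2}^2$. This is the bridge which will let us translate the results stated in the $L^2$--norm on $\mathrm{\Omega}_\mathcal{T}$ (and, via the inverse inequality (\ref{inverse_product}) and the Poincar\'e estimate (\ref{poincare}), in the energy norm $\vertiii{\cdot}_{V,Q}$) into results about the Euclidean norm of coefficients.

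For the upper bound on $\|\mathcal{A}\|$, I write $\|\mathcal{A}U\|_{\ell^2}=\sup_{V\neq 0}(V^{\!\top}\!\mathcal{A}U)/\|V\|_{\ell^2}$ and identify $V^{\!\top}\!\mathcal{A}U$ with $[A_h+J_h](\mathbf{u}_h,p_h;\mathbf{v}_h,q_h)$. The continuity Lemma \ref{cont_a_plus_j} controls the numerator by $C_{\text{bil}}\vertiii{(\mathbf{u}_h,p_h)}_{V,Q}\vertiii{(\mathbf{v}_h,q_h)}_{V,Q}$, the inverse estimate (\ref{inverse_product}) trades each $\vertiii{\cdot}_{V,Q}$--norm for the corresponding $L^2$--norm times $\max\{C_{\text{inv}}h^{-1},1\}$, and the upper mass--matrix bound replaces both $L^2$--norms by $\lambda_{\max}^{1/2}$ times the $\ell^2$--norms of $U$ and $V$. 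Collecting the constants yields $\|\mathcal{A}\|\leq C_{\text{bil}}\lambda_{\max}\max\{C_{\text{inv}}^{2}h^{-2},1\}$.

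For the upper bound on $\|\mathcal{A}^{-1}\|$, the chain runs the other way. Starting from $\lambda_{\min}^{1/2}\|U\|_{\ell^2}\leq \|(\mathbf{u}_h,p_h)\|_{\mathrm{\Omega}_\mathcal{T}}$, the Poincar\'e--type inequality (\ref{poincare})---together with the fact that $\vertiii{p_h}_{Q}=\|p_h\|_{\mathrm{\Omega}_\mathcal{T}}$, so that (\ref{poincare}) is needed only on the velocity slot---upgrades this into $\lambda_{\min}^{1/2}\|U\|_{\ell^2}\leq C_P\vertiii{(\mathbf{u}_h,p_h)}_{V,Q}$. Applying the discrete inf--sup stability of Theorem \ref{infs} replaces $\vertiii{(\mathbf{u}_h,p_h)}_{V,Q}$ by $c_{\text{bil}}^{-1}\sup_{(\mathbf{v}_h,q_h)}[A_h+J_h](\mathbf{u}_h,p_h;\mathbf{v}_h,q_h)/\vertiii{(\mathbf{v}_h,q_h)}_{V,Q}$. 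A second use of Poincar\'e on the test slot, followed by the lower mass bound, converts the denominator into $C_P^{-1}\lambda_{\min}^{1/2}\|V\|_{\ell^2}$, after which the resulting supremum is precisely $\|\mathcal{A}U\|_{\ell^2}$. This gives $\|\mathcal{A}^{-1}\|\leq c_{\text{bil}}^{-1}C_P^{2}\lambda_{\min}^{-1}$, and multiplying the two bounds produces the announced estimate (\ref{kAest}).

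I do not anticipate any genuine obstacle: the proof is a routine chaining of Lemma \ref{cont_a_plus_j}, Theorem \ref{infs}, the inverse inequality (\ref{inverse_product}), the Poincar\'e inequality (\ref{poincare}) and the spectral bounds of $\mathcal{M}$. The only care required is accounting for the $C_P^{2}$ factor that comes from using Poincar\'e twice in the $\|\mathcal{A}^{-1}\|$ argument (once on the trial side, once on the test side) and noting that, by definition of $\vertiii{\cdot}_Q$, the pressure part of the product norm already coincides with its $L^2$--norm on $\mathrm{\Omega}_\mathcal{T}$, so that Poincar\'e is applied component--wise only to the velocity.
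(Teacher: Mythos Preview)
Your proposal is correct and follows essentially the same route as the paper: bound $\|\mathcal{A}\|$ via the continuity Lemma~\ref{cont_a_plus_j} combined with the inverse estimate (\ref{inverse_product}) and the upper mass--matrix eigenvalue, and bound $\|\mathcal{A}^{-1}\|$ via the inf--sup Theorem~\ref{infs} together with two applications of the Poincar\'e inequality (\ref{poincare}) and the lower mass--matrix eigenvalue. The only cosmetic difference is that the paper writes the mass--matrix equivalence as $\lambda_{\min}^{1/2}h^{d/2}|U|_N\leq\|(\mathbf{u}_h,p_h)\|_{\mathrm{\Omega}_{\mathcal{T}}}\leq\lambda_{\max}^{1/2}h^{d/2}|U|_N$ with an explicit $h^{d/2}$ factor that cancels in the product, whereas you absorb this scaling directly into $\lambda_{\min},\lambda_{\max}$; the chain of inequalities is otherwise identical.
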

\begin{proof} 
By definition, $\kappa(\mathcal{A}) =\left\|\mathcal{A}\right\|\left\|\mathcal{A}^{-1}\right\|$ and the proof follows by providing appropriate estimates for the operator norms $\left\|\mathcal{A}\right\|$ and $\left\|\mathcal{A}^{-1}\right\|$,  as in \cite[Lemma 11]{BH2012}. For our purposes, since  $\mathcal{T}_h$ is a conforming, quasi--uniform  mesh on the extended domain $\mathrm{\Omega}_{\mathcal{T}}$, we  may use the estimate 
\begin{equation}\label{conforming}
\lambda_{\min}^{1/2} h^{d/2}\left|U\right|_{N}\leq  \left\|\left(\mathbf{u}_h, p_h\right)\right\|_{\mathrm{\Omega}_{\mathcal{T}}}\leq \lambda_{\max}^{1/2} h^{d/2}\left|U\right|_{N},
\end{equation}
to relate the continuous $L^2$--norm of a  finite element function pair $\left(\mathbf{u}_h, p_h\right)$ to the discrete $\ell_2$--norm $|U|_N=(U^TU)^{1/2}$ of the corresponding coefficient vector $U \in \mathbb{R}^N$, where $N=\dim\left(V_h\times Q_h\right)$ and $d \in \left\{2,3\right\}$ is the spatial dimension. To estimate $\left\|\mathcal{A}\right\|$, we let $(\mathbf{u}_h, p_h), (\mathbf{v}_h, q_h) \in V_h \times Q_h$ corresponding to $U, V \in \mathbb{R}^N$ and note that successive application of (\ref{continuity_a_plus_j}), (\ref{inverse_product}) and (\ref{conforming}) yields
\begin{align*}
\left|\mathcal{A}U\right|_{N} & =
\sup_{V \in \mathbb{R}^N}\frac{V^T\mathcal{A}U}{\left| V\right|_{N}}=\sup_{V \in \mathbb{R}^N}\frac{\left[A_h+J_h\right](\mathbf{u}_h, p_h ; \mathbf{v}_h, q_h)}{\left|V\right|_{N}} \\
& \leq C_{bil}\max\left\{C_{\text{inv}}^2h^{-2}, 1\right\}\lambda_{\max}h^{d}|U|_{N},
\end{align*}
whereby $\left\|\mathcal{A}\right\|=\sup_{U \in \mathbb{R}^N}\frac{\left|\mathcal{A}U\right|_{N}}{\left|U\right|_{N}}\leq C_{bil}\max\left\{C_{\text{inv}}^2h^{-2}, 1\right\}\lambda_{\max}h^{d}$.

An estimate for $\left\|\mathcal{A}^{-1}\right\|$ is obtained following a similar procedure. Indeed, letting $U \in \mathbb{R}^N$, Theorem \ref{infs} ensures the existence of a corresponding $V \in \mathbb{R}^N$, such that
$$
V^T\mathcal{A}U= \left[A_h+J_h\right](\mathbf{u}_h, p_h ; \mathbf{v}_h, q_h) \geq c_{bil}  \vertiii{(\mathbf{u}_h, p_h)}_{V, Q} \vertiii{(\mathbf{v}_h, q_h)}_{V,Q} 
$$
and then  successive application of (\ref{poincare}),  (\ref{conforming}) shows that
\begin{equation}\label{oo}
\left|\mathcal{A}U\right|_{N}  = \sup_{W \in \mathbb{R}^N} \frac{W^T\mathcal{A}U }{|W|_{N}}\geq \frac{V^T\mathcal{A}U }{|V|_{N}} \geq c_{bil}  C_P^{-2} \lambda_{\min}h^d |U|_{N}.
\end{equation}
Since $U \in \mathbb{R}^N$ is arbitrary, we may set $V=\mathcal{A}U$  to conclude
\begin{align*}
\left\|\mathcal{A}^{-1}\right\| & =
\sup_{V\in \mathbb{R}^N}\frac{|\mathcal{A}^{-1}V|_{N}}{|V|_{N}}=\sup_{V\in \mathbb{R}^N}\frac{|U|_{N}}{|V|_{N}}\stackrel{(\ref{oo})}{\leq} \sup_{V\in \mathbb{R}^N}\frac{\lambda_{\min}^{-1}C_P^2c_{bil}^{-1}h^{-d}|V|_{N}}{|V|_{N}} \\
& = \lambda_{\min}^{-1}C_P^2c_{bil}^{-1}h^{-d}.
\end{align*}
Combining the estimates for  $\left\|\mathcal{A}\right\|$ and $\left\|\mathcal{A}^{-1}\right\|$ the result already follows.
\end{proof}

\begin{rem}
All constants  in {\dv(\ref{kAest})} are independent of the relative position of the boundary $\mathrm{\Gamma}$ with respect to the background mesh, hence Theorem \ref{kAestimate}  provides a geometrically robust estimate for $\kappa(\mathcal{A})$. For most practical purposes, mesh size $h$ is extremely small and the simplified form 
$$
\kappa(\mathcal{A})\leq C_{bil}C_P^2c_{bil}^{-1} \frac{\lambda_{\max}}{\lambda_{\min}} C_{\text{inv}}^2h^{-2}
$$
of {\dv(\ref{kAest})}  shows that the condition number  can be bounded by $\mathcal{O}(h^{-2})$. 
\end{rem}

\section{Numerical Experiments}  \label{section6}

\subsection{Convergence study}
We consider a two--dimensional test case of (\ref{The Stokes Problem}) in the unit square $\mathrm{\Omega}=\left[0,1\right]^2$ with manufactured exact solution
$$
\mathbf{u}\left(x,y\right)=\left(u\left(x,y\right), -u\left(y,x\right)\right), \quad p\left(x,y\right)=\sin\left(2\pi x\right)\cos\left(2\pi y\right),
$$
where $u(x,y)=\left(\cos\left(2\pi x\right)-1\right)\sin\left(2\pi y\right)$. 
Note that the mean value of $p\left(x,y\right)$ over $\mathrm{\Omega}$ vanishes by construction,
thus ensuring that the problem (\ref{The Stokes Problem}) is uniquely solvable. As in subsection \ref{22}, in the spirit of a fictitious domain approach, we consider the original domain $\mathrm{\Omega}$ as being immersed in the background domain $\mathcal{B}=\left[-0.5, 1.5\right]^{2}$ (see Figure \ref{mesh}).  A level set description of the geometry is possible via the function
\begin{equation}\label{lset}
\phi\left(x,y\right)=\left|x-0.5\right|+\left|y-0.5\right|+\left|\left|x-0.5\right|-\left|y-0.5\right|\right|-1<0.
\end{equation}

To investigate error convergence behavior of the discretization (\ref{cutdg}), we consider a sequence of successively refined tessellations $\{\mathcal{B}_{h_\ell}\}_{\ell>0}$ of $\mathcal{B}$ with
mesh parameters $h_\ell=2^{-\ell-2}$, for $\ell=0, \ldots, 7$. 
{\blue In our implementation, we use equal--order piecewise polynomial spaces of degree $k\in\{1,2,3\}$.  
The discrete inf--sup stability of the proposed pressure--velocity coupling is guaranteed by the stabilizing term $c_h(p_h,q_h)$ in (\ref{c}) which penalizes the pressure jumps across the interior facets of the domain. Moreover, the 
bilinear forms $j_u(\mathbf{u}_h,\mathbf{v}_h)$ in (\ref{ju_high_order}) and $j_p(p_h,q_h)$ in (\ref{jp_high_order}) are essential so as to provide sufficient control over the discrete norms on the whole computational domain. These terms are also critical in order to derive geometrically robust condition numbers and require evaluation of high order normal derivative jumps in the boundary zone for  polynomial degrees up to $k$. 
}

By Theorem \ref{a_coerc}, the symmetric interior penalty parameter $\beta$ in (\ref{a_alt}) should be chosen suitably large for the method to be well-defined, since small values of $\beta$ may affect the quality of the resulting simulation to a great extent increasing both velocity and pressure errors rapidly.
Thus, $\beta$ is judiciously selected to be positively correlated to the finite element order $k$ and to scale as $\beta = 40k^2(k+1)^2$. We note that excessively large values of $\beta$ seem to increase errors, the pressure field error being more sensitive. {\blue In addition, the pressure stabilization parameter $\gamma$ in \eqref{c} also scales in accordance with the polynomial degree $k$, that is $\gamma=10k^2$, and the ghost penalty parameters in (\ref{ju_high_order}), (\ref{jp_high_order})  are chosen as $\gamma_{\mathbf{u}}=\gamma_{p}=\{\gamma_j\}_{j=0}^k=\{40k^2, 0.1, 0.01, 0.001\}$}. Finally, a sparse direct solver has been used to solve the arising linear systems. A sequence of approximations for the first component of the velocity solution in progressively finer unfitted meshes with $k=1$  is illustrated in Figure \ref{coarse}, showcasing the convergence of the method.

As predicted by the theoretical error estimate stated in Theorem \ref{order_revised}, optimal $k$-th order convergence rates with respect to the $H^{1}$--norm of the velocity error and the $L^{2}$--norm of the pressure error are indeed verified by the numerical results in Table \ref{table1} ($k=1$) and Table \ref{table2} ($k=2, 3$), the superiority of the highest order approach being evident.  Indeed, for larger $k$, much smaller errors are attained in progressively smaller mesh sizes. 
{\blue For $k=2$, although initially the pressure convergence rates appear to be low, eventually the expected rates are attained as the mesh becomes finer with the relative pressure errors to decrease}. {\blue We also confirm the effect of the pressure stabilization $c_h(p_h,q_h)$ in (\ref{c}) on discontinuous $P_1-P_1$ elements considering two individual cases in Table \ref{table1}: one stabilized with the term $c_h(p_h,q_h)$  and the other omitting $c_h(p_h,q_h)$, i.e., setting $\gamma=0$. As expected, the stabilization yields better pressure convergence rates, leading to a significant improvement on the pressure errors.

Cases of cut elements in $G_h$ that have an almost zero intersection with the physical domain $\mathrm{\Omega}$ may lead to severe ill conditioning of the system matrix. As mentioned above, this issue is alleviated by penalizing the normal derivative velocity and pressure jumps defined over the fictitious domain across elements that are cut by the unfitted interface. When these terms are suitably regulated by the ghost penalty parameters, then the numerical approach is consistent and geometrically robust. In this context, we devote the following subsection to perform a condition number sensitivity study with respect to cut location.	
}

\begin{table}[htbp]{\blue
\caption{Errors and experimental orders of convergence (EOC) with respect to $H^1$-norm for the velocity and $L^2$-norm for the pressure, using $P_1-P_1$ finite elements. Two cases: one stabilized including the term $c_h(p_h,q_h)$ in (\ref{c}) and the other not stabilized ($\gamma=0$).}\label{table1}  
\resizebox{12cm}{!}{
\begin{tabular}{c|cc|cc|cc|cc}
\toprule
& \multicolumn{2}{c|}{not stabilized}  &  \multicolumn{2}{c|}{stabilized} &  \multicolumn{2}{c|}{not stabilized} &  \multicolumn{2}{c}{stabilized} \\
$h_{\max}$ & $\left\|\mathbf{u}-\mathbf{u}_h\right\|_{1,\mathrm{\Omega}}$ & EOC 
& $\left\|\mathbf{u}-\mathbf{u}_h\right\|_{1,\mathrm{\Omega}}$ & EOC &
$\left\|p-p_h\right\|_{\mathrm{\Omega}}$ & EOC & $\left\|p-p_h\right\|_{\mathrm{\Omega}}$ & EOC\\
\midrule
$2^{-2}$ & 2.38894 &       & 2.35599 &       & 2.42362 &  & 0.79575 &  \\
$2^{-3}$ & 1.07253 & 1.155 & 1.06937 & 1.140 & 2.97449 & -0.296 & 0.48872 & 0.703 \\
$2^{-4}$ & 0.55039 & 0.962 & 0.56680 & 0.916 & 1.59860 & 0.896 & 0.27361 & 0.837 \\
$2^{-5}$ & 0.26797 & 1.038 & 0.28483 & 0.993 & 0.88112 & 0.859 & 0.14629 & 0.903 \\
$2^{-6}$ & 0.13776 & 0.960 & 0.14799 & 0.945 & 0.45211 & 0.963 & 0.07668 & 0.932 \\
$2^{-7}$ & 0.06781 & 1.023 & 0.07301 & 1.019 & 0.22944 & 0.979 & 0.03889 & 0.980 \\
$2^{-8}$ & 0.03381 & 1.004 & 0.03654 & 0.999 & 0.11605 & 0.983 & 0.01966 & 0.984 \\
$2^{-9}$ & 0.01692 & 0.999 & 0.01828 & 0.999 & 0.05780 & 1.005 & 0.00984 & 0.999 \\
\midrule
Mean  &       & 1.020 &     & 1.002 &         & 0.770 &        & 0.905 \\
\bottomrule
\end{tabular}}
}
\end{table}

\begin{table}[htbp]
{\blue
\centering  
\caption{Errors and experimental orders of convergence (EOC) with respect to $H^1$-norm for the velocity and $L^2$-norm for the pressure, using equal order $P_2-P_2$ and $P_3-P_3$ finite elements.}\label{table2}
\resizebox{12cm}{!}{
\begin{tabular}{c|cc|cc|cc|cc} 
\toprule
& \multicolumn{4}{c|}{$P_2-P_2$} &  \multicolumn{4}{c}{$P_3-P_3$}  \vspace{0.1cm} \\ 
$h_{\max}$ & $\left\|\mathbf{u}-\mathbf{u}_h\right\|_{1,\mathrm{\Omega}}$ & EOC 
& $\left\|p-p_h\right\|_{\mathrm{\Omega}}$ & EOC 
& $\left\|\mathbf{u}-\mathbf{u}_h\right\|_{1,\mathrm{\Omega}}$ & EOC
& $\left\|p-p_h\right\|_{\mathrm{\Omega}}$ & EOC \\
\midrule 
$2^{-2}$ & 0.89510 &       & 0.45367 &       & 0.40452 &       & 0.68433 &  \\
$2^{-3}$ & 0.18857 & 2.247 & 0.27160 & 0.740 & 0.02893 & 3.806 & 0.06255 & 3.452 \\
$2^{-4}$ & 0.05428 & 1.797 & 0.19667 & 0.466 & 0.00335 & 3.109 & 0.00490 & 3.674 \\
$2^{-5}$ & 0.01616 & 1.748 & 0.12096 & 0.701 & 0.00062 & 2.438 & 0.00141 & 1.795 \\
$2^{-6}$ & 0.00386 & 2.065 & 0.04889 & 1.307 & 0.00011 & 2.509 & 0.00017 & 3.081 \\
$2^{-7}$ & 0.00088 & 2.141 & 0.01540 & 1.666 & 0.00004 & 1.294 & 0.00007 & 1.242 \\
$2^{-8}$ & 0.00021 & 2.029 & 0.00431 & 1.838 &         &       &  
&  \\
$2^{-9}$ & 0.00008 & 1.623 & 0.00113 & 1.932 &         &       &         &  \\
\midrule
Mean  &       & 1.950 &         & 1.236 &         & 2.631 &         & 2.649 \\
\bottomrule
\end{tabular}}
}
\end{table}

{\blue
\subsection{Condition number tests with respect to the cut location}
The purpose of this subsection is to ascertain the effectiveness of the proposed unfitted dG scheme and its geometric robustness irrespective of the position of the boundary mesh. Therefore, we investigate how the magnitude of the condition number $\kappa(\mathcal{A})$ of the corresponding system matrix $\mathcal{A}$  associated with the unfitted dG formulation \eqref{cutdg} is affected by the position of the boundary with respect to the mesh and the values of the stabilization parameters regulating the ghost--penalty terms.  

To this end, we consider a fixed fictitious domain $\mathcal{B}=[-0.5,1.5]^2$ and a family of immersed  physical domains $\mathrm{\Omega}_{\delta_{\ell}} = [-0.5+\delta_{\ell}, 0.5+\delta_{\ell}]^2$ perturbed with respect to a parameter $\delta_{\ell}=2\ell\cdot 10^{-3}$ for $\ell=1, \ldots, 500$. We use discontinuous $P_k-P_k$ elements of order $k\in\{1, 2, 3\}$ and we construct a quasi-uniform triangulation $\mathcal{T}_h$ with mesh size $h=0.15$. Then we estimate the condition numbers $\kappa(\mathcal{A})$ for each cut configuration corresponding to the polynomial order and plot them against the perturbation parameter. Scaling the symmetric interior penalty constant $\beta=40k^2(k+1)^2$ and the pressure stabilization coefficient $\gamma=10k^2$ with respect to the polynomial degree $k$, we optimize the choice of ghost penalty parameters $\gamma_{\mathbf{u}}= \gamma_p=\{\gamma_{j}\}_{j=0}^k$
among varying values. 

Figures \ref{cond_sensitivity_P1}, \ref{cond_sensitivity_P2} and \ref{cond_sensitivity_P3} overview the aforementioned experiments for discontinuous linear, quadratic and cubic finite elements, respectively. As indicated by the graphs at the top left pictures, variance in $\delta$ may indeed causes severe ill-conditioning dependence on the boundary location if ghost penalty stabilization is removed or only partially activated, i.e. $\gamma_j=0$, $j=0, 1, 2, 3$. Then the condition numbers increase drastically in proportion to the polynomial degree with high oscillatory behavior in relation to $\delta$. As supported by the graphs in Figures \ref{cond_sensitivity_P1} and \ref{cond_sensitivity_P2} (top left pictures), this phenomenon is alleviated for $k=1, 2$ only by taking the full order normal gradient jumps in the stabilization term with coefficients $\gamma_{\mathbf{u}}=\gamma_p=\{40k^2, 0.1, 0.01\}$. However, the numerical evidence for $k=3$ in Figure \ref{cond_sensitivity_P3} illustrate that the effect of ghost penalties seems to decay  with the condition numbers being more sensitive as functions of $\delta$. Nevertheless, if full stabilization $\gamma_{\mathbf{u}}=\gamma_p=\{360, 0.1, 0.01,0.001\}$ is included, then the condition number magnitudes appear to be bounded in a lower values' interval, as expected by Theorem \ref{kAestimate}
. On the other hand, if no stabilization is added, the condition number values 
are reaching up to $10^{31}$ instead of $10^{23}$ for the stabilized case. Some more tests displayed at the right part of Figure \ref{cond_sensitivity_P3} on scaled coefficient $\gamma_0$ with orders of magnitude in the set $\{10^{-2}, 10^{-4},10^{-6}\}$ also convey an unstable behavior with larger spikes than before. 
A possible remedy for this issue would be to pursue the 
techniques analyzed in \cite{L16,S17,S17II,S15} and will be studied in a future work.

Furthermore, the remaining pictures in Figures \ref{cond_sensitivity_P1} and \ref{cond_sensitivity_P2} capture 
the variation of the condition number for $k=1, 2$ over different scaling of the ghost penalty parameters and reveal a lower threshold to produce a robust method. We conduct a number of numerical tests either by simultaneously scaling the parameters $\gamma_{\mathbf{u}}=\gamma_p=\{40k^2, 0.1, 0.01\}$ with orders of magnitude in the set $\{10^{\pm 2}, 10^{\pm 4},10^{\pm 6}\}$ (top right pictures) or by holding one of them fixed at a time (bottom pictures). For $k=2$, we have selected to present the effect of the coefficients $\gamma_1, \gamma_2$ on the condition numbers, since the influence of $\gamma_0$ bears close resemblance to the linear case with only difference the condition numbers magnitudes to range between $10^{10}$ and $10^{15}$. It is clear from the plots that small values of the ghost penalty parameters result in 
condition number instabilities, while excessively large values  lead to large condition numbers. Comparing the observations,  $\gamma_{\mathbf{u}}=\gamma_p=\{40k^2, 0.1, 0.01\}$,  $k\in\{1,2\}$ indicate a fine tuning between the accuracy of the method and the size and fluctuation of the condition number.

\begin{figure}[h]
\includegraphics[scale=0.51]{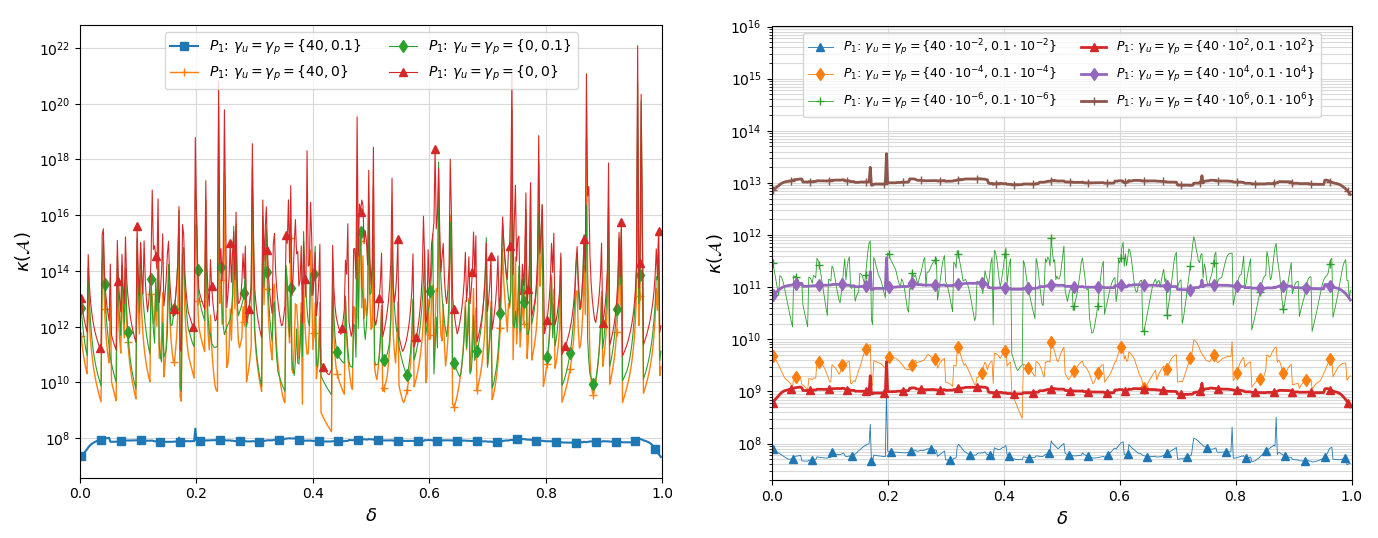}
\!\!\!\!\!\!
\includegraphics[scale=0.5]{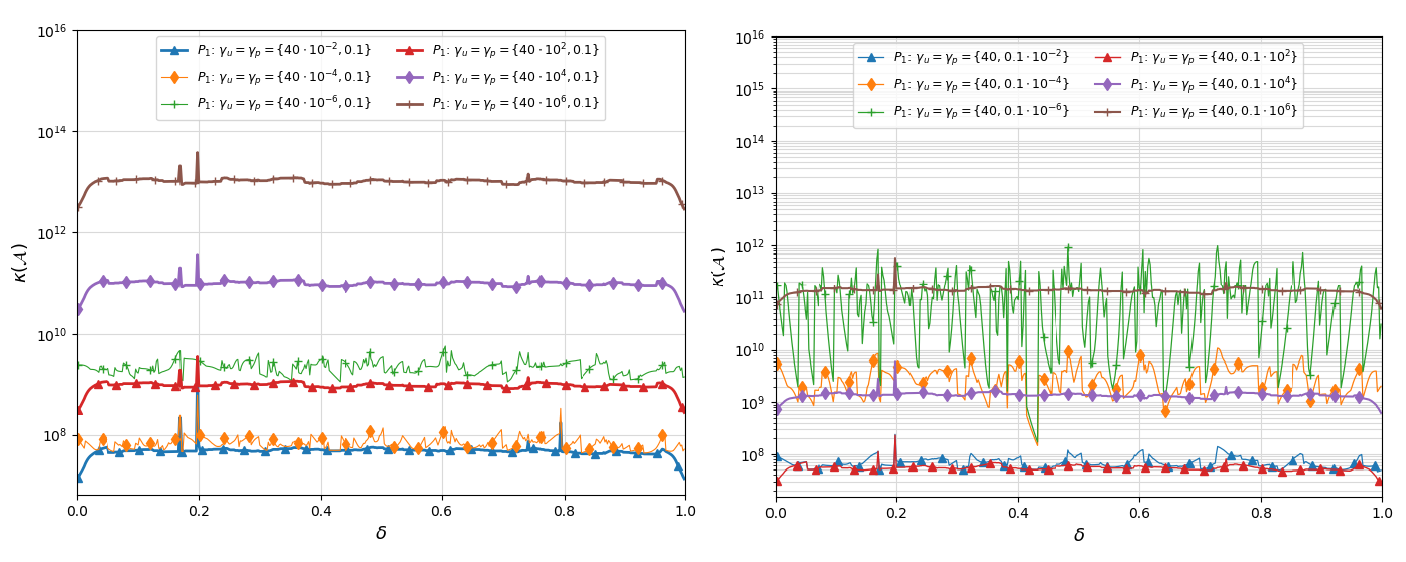}
\caption{{\blue{Condition numbers $\kappa(\mathcal{A})$ with respect to parameter $\delta$ for perturbed physical domains $\mathrm{\Omega}_\delta=[-0.5+\delta, 0.5+\delta]^2$ and varying ghost penalty parameters $\gamma_{\mathbf{u}}=\gamma_{p}$. 
Condition number sensitivity study with and without ghost penalty stabilization (top left) and simultaneously scaled parameters (top right). 
Condition numbers for a variation of coefficients $\gamma_{0}$ (bottom left) and $\gamma_{1}$ (bottom right). All estimates have been computed using discontinuous $P_1-P_1$ elements and symmetric interior penalty parameter $\beta=160$.}}} \label{cond_sensitivity_P1}
\end{figure}

\begin{figure}[h]
\centering
\includegraphics[scale=0.475]{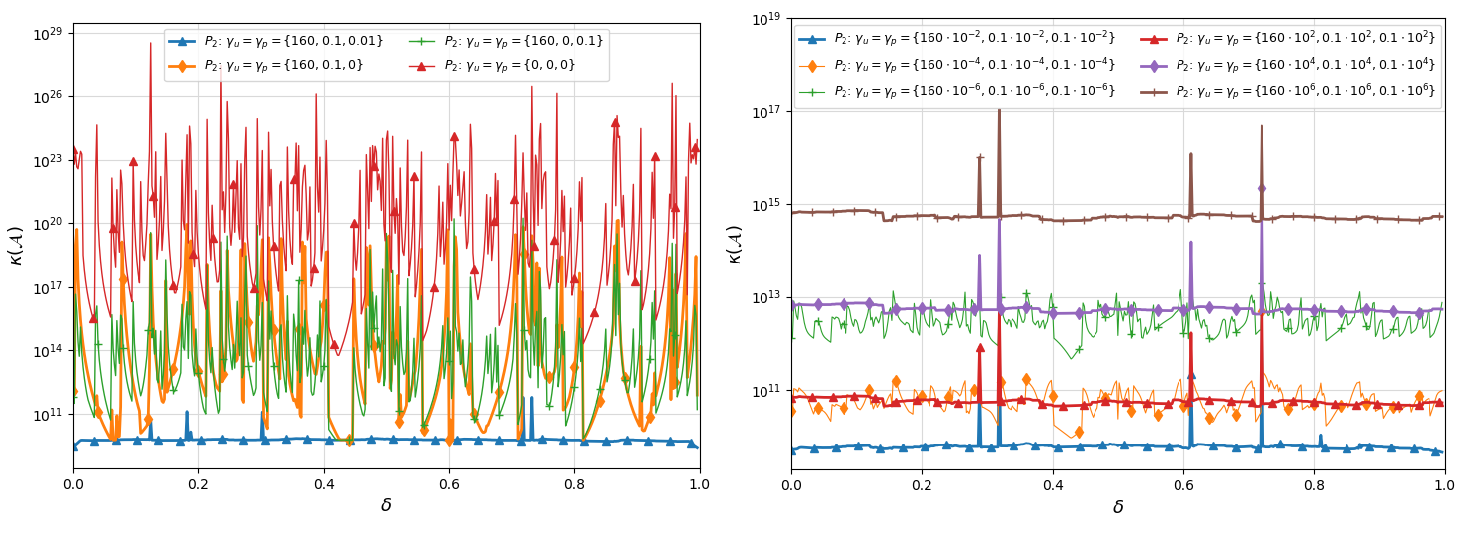}
\includegraphics[scale=0.475]{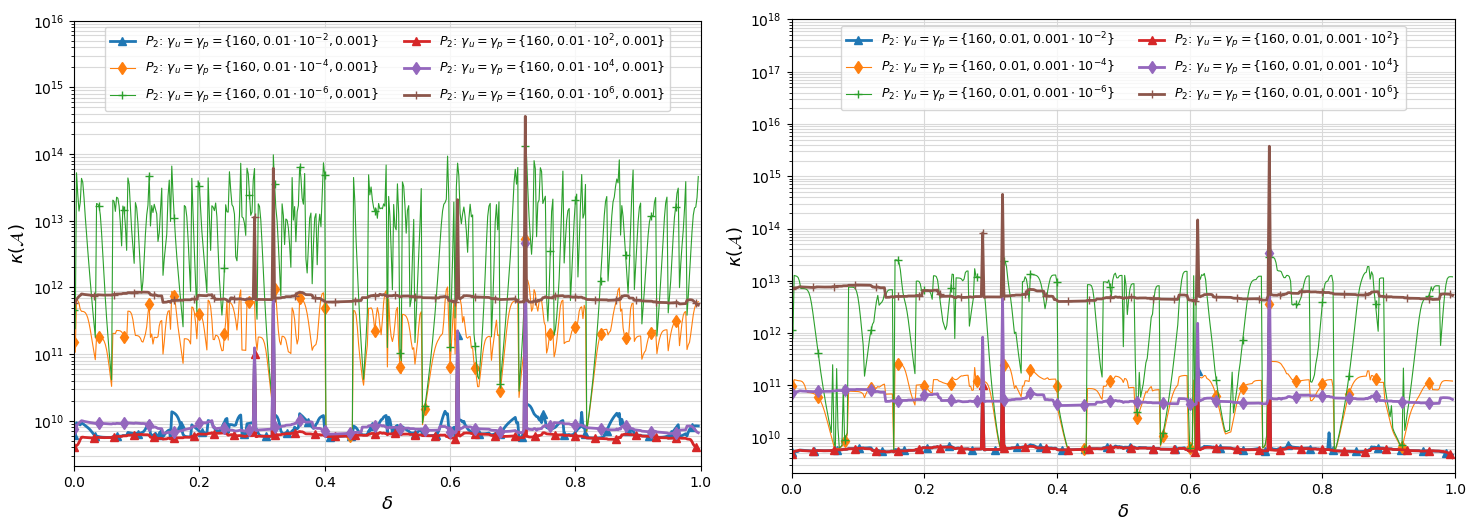}
\caption{{\blue{Condition numbers $\kappa(\mathcal{A})$ with respect to parameter $\delta$ for perturbed physical domains $\mathrm{\Omega}_\delta=[-0.5+\delta, 0.5+\delta]^2$ and varying ghost penalty parameters $\gamma_{\mathbf{u}}=\gamma_{p}$. Condition number sensitivity study with and without ghost penalty stabilization (top left) and simultaneously scaled parameters (top right). Condition numbers for a variation of coefficients $\gamma_{1}$ (bottom left), $\gamma_{2}$ (bottom right). All estimates have been computed using discontinuous $P_2-P_2$ elements and symmetric interior penalty parameter $\beta=1440$.}}} \label{cond_sensitivity_P2}
\end{figure}

\begin{figure}[h]
\includegraphics[scale=0.5]{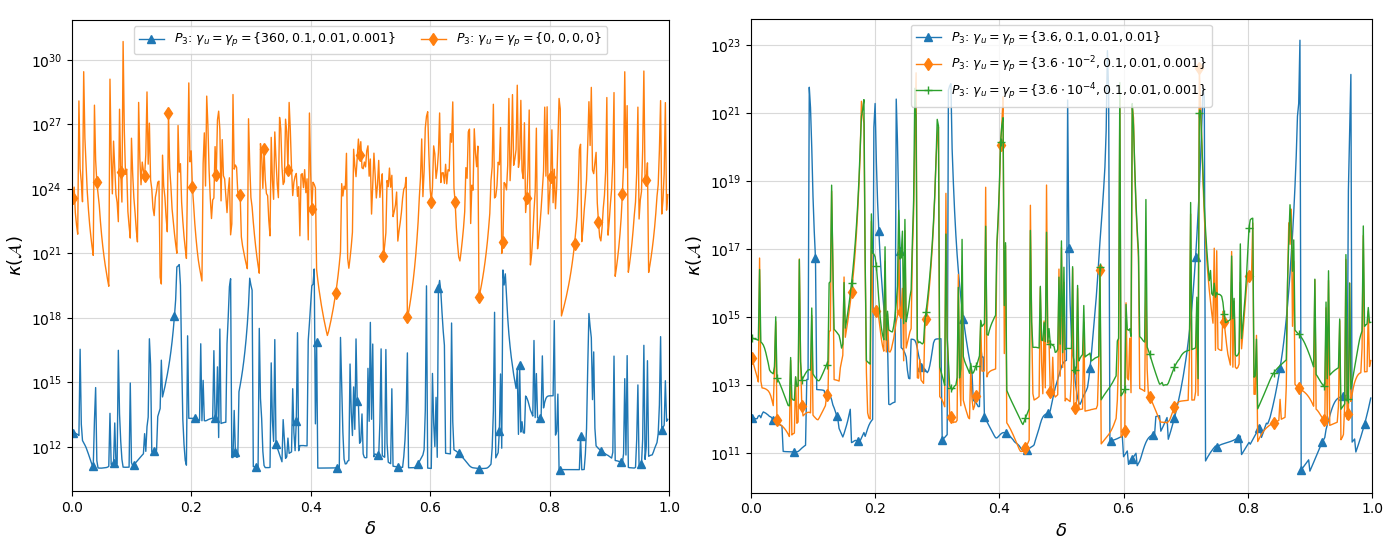}
\caption{{\blue{Condition numbers $\kappa(\mathcal{A})$ with respect to parameter $\delta$ for perturbed physical domains $\mathrm{\Omega}_\delta=[-0.5+\delta, 0.5+\delta]^2$ and varying ghost penalty parameters $\gamma_{\mathbf{u}}=\gamma_{p}$. 
Condition number sensitivity study with and without ghost penalty stabilization (left) and scaled parameter $\gamma_0$ (right). All estimates have been computed using discontinuous $P_3-P_3$ elements and symmetric interior penalty parameter $\beta=1920$.}}} \label{cond_sensitivity_P3}
\end{figure}
}

{\blue
\subsection{Error sensitivity analysis instance for $P_3-P_3$ finite elements with respect to the cut location}

In the current subsection, we focus on the most challenging case of the higher order discontinuous $P_3-P_3$ finite elements to  present the sensitivity of the condition number and the velocity and pressure errors with respect to the cut location. In this experiment, we embed a geometry of smooth boundary representation into a fixed fictitious domain adapting the example of manufactured solution provided in Burman et al \cite{BCM15} to be our exact solution. Let $\mathcal{B}=[-1,1]^2$ be the background domain triangulated with mesh size $h=0.05$. We consider the computational domain to be a circular disk centered at the origin. Compatible with the exact solution, velocity field embedded Dirichlet boundary conditions are also weakly imposed.

In order to produce different cut configurations, we uniformly shift the radius  of the circle  to be $R_{\delta_{\ell}}=R+(\delta_{\ell}+0.04)10^{-2}h$, where $R=0.1$ with respect to a parameter $\delta_{\ell}=2\ell\cdot 10^{-3}$ for $\ell=1, \ldots, 250$. Using discontinuous $P_3-P_3$ finite elements, we  estimate the condition number of the associated stiffness matrix and also we measure the $H^{1}$-norm velocity error and the $L^{2}$-norm pressure error for each cut position. We plot the results against the parameter $\delta$ for successively activated ghost penalty parameters at the optimized values from the previous subsection. 
Representative graphs for the impact of the stabilization term on the condition number and the velocity and pressure errors  are shown in Figures \ref{condP3_circle} and \ref{error_sensitivity_P3}, respectively. 
It is notable that in this test we accomplish robust condition numbers when full ghost penalties are included. 
It is also clearly visible that absence of the stabilization term results in velocity and pressure errors with large spikes and a strong dependence on the location of the interface, whereas the errors become completely insensitive in case of full ghost penalties. A closer look on the instabilities is also provided and highlighted in the zoomed right plots of Figure \ref{error_sensitivity_P3}. 

\begin{figure}[h]
\centering
\includegraphics[scale=0.378]{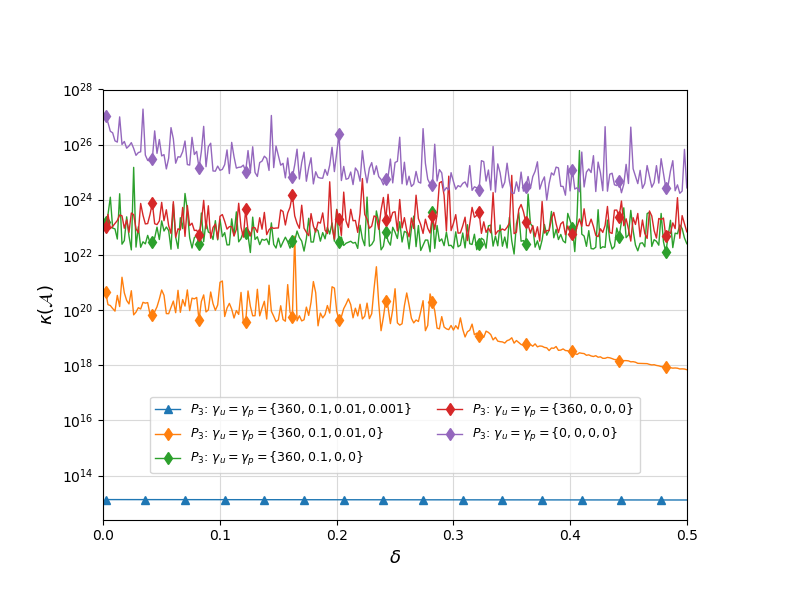}
\caption{{\blue Condition numbers $\kappa(\mathcal{A})$ with respect to parameter $\delta$ for uniformly shifted circular domains of radius $R_{\delta}=0.1+(\delta+0.04)10^{-2}h$ and successively activated ghost penalty parameters $\gamma_{\mathbf{u}}=\gamma_{p}$. The estimates have been computed using discontinuous $P_3-P_3$ elements.}} \label{condP3_circle}
\end{figure}

\begin{figure}[h]
\includegraphics[scale=0.51]{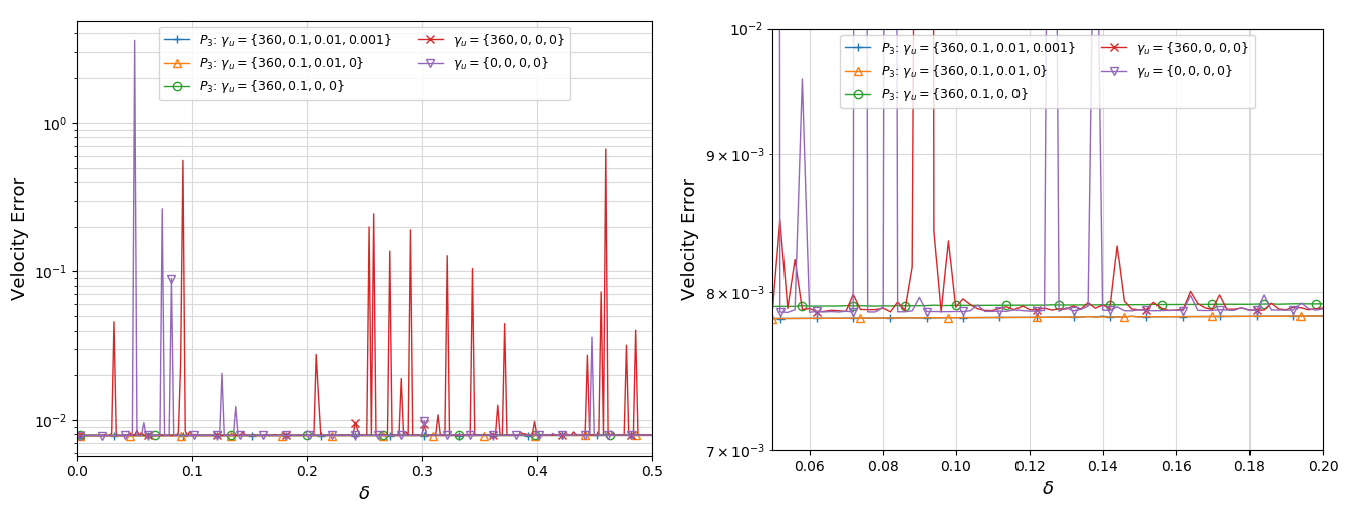}
\!\!\!\!\!\!
\includegraphics[scale=0.521]{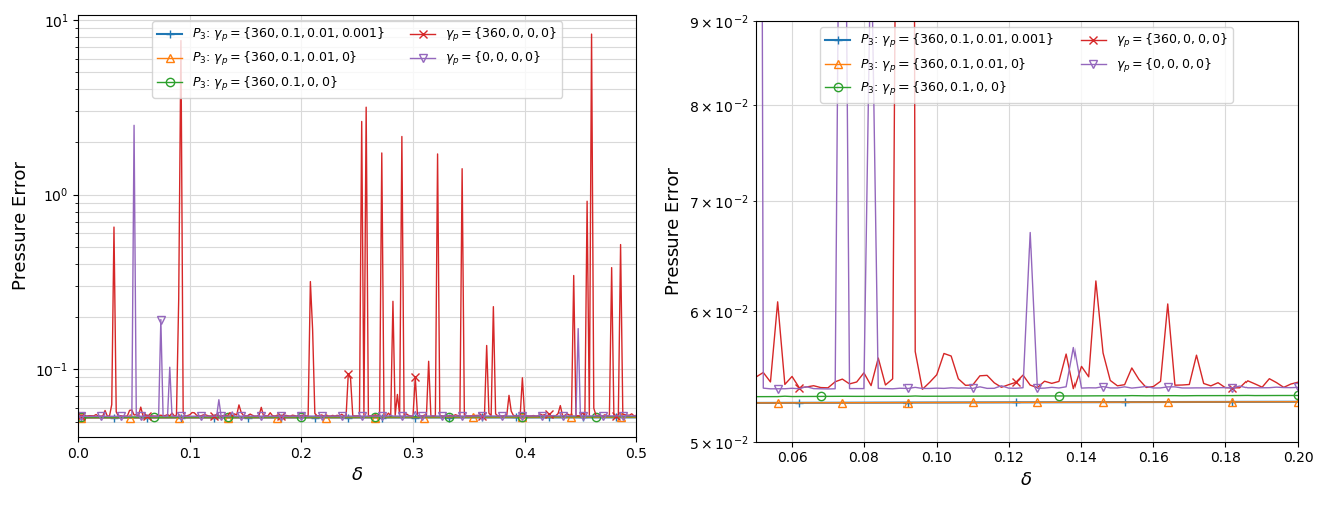}
\caption{{\blue Sensitivities of $H^1$-norm velocity errors (top) and $L^2$-norm pressure errors (bottom) with respect to parameter $\delta$ for uniformly shifted circular domains of radius $R_{\delta}=0.1+(\delta+0.04)10^{-2}h$ and successively activated ghost penalty parameters $\gamma_{\mathbf{u}}=\gamma_{p}$. A closer look on the error instabilities is highlighted at the right pictures. The estimates have been computed using discontinuous $P_3-P_3$ elements.} } \label{error_sensitivity_P3}
\end{figure}

}

\section{Conclusion} 

In this paper, we proposed and tested a stabilized unfitted discontinuous Galerkin method for the incompressible Stokes flow. Optimal order convergence is proved for higher order finite elements which are discontinuous element-wise polynomials of equal order for both velocity and pressure fields. For this equal order case, 
{\blue pressure face jump penalization is employed to achieve stability in the bulk of the domain}. 
Additionally, to ensure stability and error estimates which are independent of the position of the boundary with respect to the mesh, the formulation is augmented with additional boundary zone ghost penalty terms for both velocity and pressure. These terms act on the jumps of the normal   derivatives at faces associated with cut elements. This method may prove valuable in engineering applications where special emphasis is placed on the effective approximation of pressure, attaining much smaller relative errors  in coarser meshes. In fact, control over the error of the pressure field is among the most decisive points of difficulty for many methods. {\blue Additionally, a uniformly bounded estimate for the condition number $\kappa(\mathcal{A})$ of the stiffness matrix is provided}.

Numerical examples demonstrated the stability and accuracy properties of the method. The theoretical  convergence rates for the $H^1$-norm of the velocity and the $L^2$-norm of the pressure have been validated by our tests, even for the $P_3 - P_3 $ case. 
{\blue Finally, 
we employed a condition number sensitivity analysis for a family of perturbed immersed domains {\blue with corners in the embedded boundary} with respect to several cut configurations and a variation of ghost penalty parameters.
In a series of numerical tests for linear and quadratic velocity and pressure approximations, we confirmed the geometrical
robustness of the proposed unfitted dG scheme ensuring a well-conditioned system independent of the location of the interface. However, cubic approximations exhibited condition number oscillations with respect to the cut boundary, their magnitudes though lying in a bounded region of values as expected by the respective theory.} {\blue On the other hand, additional tests on perturbed immersed domains with smooth boundary representation revealed 
robust condition numbers and velocity and pressure errors for the $P_3-P_3$ elements.}

In the present work, we focused on the static Stokes problem. Future work will also extend our investigations to more general fluid mechanics problems, including time--dependent problems on complex and/or evolving domains. 

\section*{Acknowledgements}
This project has received funding from the Hellenic Foundation for Research and Innovation (HFRI) and  the  General  Secretariat  for  Research  and  Technology (GSRT), under  grant agreement No[1115]. This work was supported by computational time granted from the National Infrastructures for Research and Technology S.A. (GRNET S.A.) in the National HPC facility - ARIS - under project ID pa190902 and the “First Call for H.F.R.I. Research Projects to support Faculty members and Researchers and the procurement of high-c                                        ost research equipment” grant 3270. The authors wish to thank Prof. K. Chrysafinos from NTUA and Prof. E.H. Georgoulis from Leicester University for valuable comments and inspiring ideas. {\blue Also the authors would like to express their gratitude to Assoc. Prof. Andr{\'e} Massing and the anonymous referee for the careful reading of the manuscript and for the constructive comments, which helped them improve the quality of this article, as well as, the contributors of the ngsolve \cite{Scho14,ngsolve}, and ngsxfem software \cite{LeHePreWa21,ngsxfem} that have been used.}

\bibliographystyle{amsplain}

\end{document}